\documentclass[letterpaper,10pt]{article}
\usepackage{fullpage}

\usepackage{graphicx}%
\usepackage{multirow}%
\usepackage{amsmath,amssymb,amsfonts}%
\usepackage{amsthm}%
\usepackage{mathrsfs}%
\usepackage[title]{appendix}%
\usepackage{xcolor}%
\usepackage{textcomp}%
\usepackage{manyfoot}%
\usepackage{booktabs}%
\usepackage{listings}%
\usepackage{caption}
\usepackage{microtype}

\usepackage[numbers,sort&compress]{natbib}

\usepackage{hyperref}
\usepackage{url}
\usepackage{mathtools}

\usepackage{comment}
\usepackage{siunitx}
\usepackage{relsize}
\usepackage{ifthen}
\usepackage[colorinlistoftodos]{todonotes}

\usepackage[caption=false]{subfig}

\usepackage[vlined,ruled,linesnumbered]{algorithm2e}
\usepackage{graphics} 
\usepackage{rotating}
\usepackage{color}
\usepackage{enumerate}
\usepackage[T1]{fontenc}
\usepackage{psfrag}
\usepackage{epsfig} 
\usepackage{booktabs}
\usepackage{graphicx,url}
\usepackage{multirow}
\usepackage{array}
\usepackage{latexsym}
\usepackage{amsfonts}
\usepackage{amsmath}
\usepackage{amssymb}
\usepackage{mathtools}
\usepackage{xstring}
\usepackage{multirow}
\usepackage{xcolor}
\usepackage{prettyref}
\usepackage{flexisym}
\usepackage{bigdelim}
\usepackage{breqn} 
\usepackage{listings}

\usepackage{enumitem}
\usepackage{xspace}
\usepackage{bm}
\graphicspath{{./figures/}}
\usepackage{tikz}
\usetikzlibrary{matrix,calc}

\usepackage{amsmath,array,arydshln,xparse}
\usepackage{adjustbox}

\usepackage{mdwlist}

\makecompactlist{itemize}{stditemize}

\newrefformat{prob}{Problem\,\ref{#1}}
\newrefformat{def}{Definition\,\ref{#1}}
\newrefformat{sec}{Section\,\ref{#1}}
\newrefformat{sub}{Section\,\ref{#1}}
\newrefformat{prop}{Proposition\,\ref{#1}}
\newrefformat{app}{Appendix\,\ref{#1}}
\newrefformat{alg}{Algorithm\,\ref{#1}}
\newrefformat{cor}{Corollary\,\ref{#1}}
\newrefformat{thm}{Theorem\,\ref{#1}}
\newrefformat{lem}{Lemma\,\ref{#1}}
\newrefformat{fig}{Fig.\,\ref{#1}}
\newrefformat{tab}{Table\,\ref{#1}}

\newcommand{\bdmath}{\begin{dmath}}
\newcommand{\edmath}{\end{dmath}}
\newcommand{\beq}{\begin{equation}}
\newcommand{\eeq}{\end{equation}}
\newcommand{\bdm}{\begin{displaymath}}
\newcommand{\edm}{\end{displaymath}}
\newcommand{\bea}{\begin{eqnarray}}
\newcommand{\eea}{\end{eqnarray}}
\newcommand{\beal}{\beq \begin{array}{ll}}
\newcommand{\eeal}{\end{array} \eeq}
\newcommand{\beas}{\begin{eqnarray*}}
\newcommand{\eeas}{\end{eqnarray*}}
\newcommand{\ba}{\begin{array}}
\newcommand{\ea}{\end{array}}
\newcommand{\bit}{\begin{itemize}}
\newcommand{\eit}{\end{itemize}}
\newcommand{\ben}{\begin{enumerate}}
\newcommand{\een}{\end{enumerate}}

\newcommand{\calA}{{\cal A}}
\newcommand{\calB}{{\cal B}}
\newcommand{\calC}{{\cal C}}
\newcommand{\calD}{{\cal D}}

\newcommand{\calH}{{\cal H}}

\newcommand{\calK}{{\cal K}}
\newcommand{\calL}{{\cal L}}

\newcommand{\calQ}{{\cal Q}}
\newcommand{\calR}{{\cal R}}
\newcommand{\calS}{{\cal S}}
\newcommand{\calT}{{\cal T}}

\newcommand{\calX}{{\cal X}}
\newcommand{\calY}{{\cal Y}}

\newcommand{\ie}{\emph{i.e.,}\xspace}

\newcommand{\hide}[1]{}

\newcommand{\hiddenText}{{\color{gray} hidden text.}}
\newcommand{\hideWithText}[1]{\hiddenText}

\newcommand{\subject}{\text{ subject to }}

\newcommand{\norm}[1]{\left\| #1 \right\|}

\newcommand{\diag}[1]{\mathrm{diag}\left(#1\right)}

\newcommand{\blue}[1]{{\color{blue}#1}}

\newcommand{\linkToPdf}[1]{\href{#1}{\blue{(pdf)}}}
\newcommand{\linkToPpt}[1]{\href{#1}{\blue{(ppt)}}}
\newcommand{\linkToCode}[1]{\href{#1}{\blue{(code)}}}
\newcommand{\linkToWeb}[1]{\href{#1}{\blue{(web)}}}
\newcommand{\linkToVideo}[1]{\href{#1}{\blue{(video)}}}
\newcommand{\linkToMedia}[1]{\href{#1}{\blue{(media)}}}
\newcommand{\award}[1]{\xspace} 

\newcommand{\R}{\mathbb{R}}

\renewcommand{\norm}[1]{\lVert #1 \rVert}
\newcommand{\inprod}[2]{\langle #1, #2 \rangle}

\newcommand{\sym}[1]{\mathbb{S}^{#1}}

\newcommand{\barcalA}{\bar{\calA}}

\newcommand{\bmat}{\left[ \begin{array}}
\newcommand{\emat}{\end{array}\right]}
\newcommand{\psd}[1]{\sym{#1}_{+}}

\newcommand{\abs}[1]{\left|#1\right|}

\newcommand{\KKT}{\mathrm{KKT}}
\newcommand{\normtwo}[1]{\norm{#1}_2}
\newcommand{\normF}[1]{\norm{#1}_\mathsf{F}}
\newcommand{\normHS}[1]{\norm{#1}_\mathrm{HS}}
\newcommand{\normD}[1]{\norm{#1}_\calD}
\newcommand{\normop}[1]{\norm{#1}_{\mathrm{op}}}

\newcommand{\zk}{z^{(k)}}
\newcommand{\zkpo}{z^{(k+1)}}
\newcommand{\xk}{x^{(k)}}
\newcommand{\xkpo}{x^{(k+1)}}
\newcommand{\sk}{s^{(k)}}
\newcommand{\skpo}{s^{(k+1)}}
\newcommand{\yk}{y^{(k)}}
\newcommand{\ykpo}{y^{(k+1)}}
\newcommand{\ykhalf}{y^{(k+1/2)}}

\newcommand{\PA}{P_{\calA}}
\newcommand{\PAp}{P_{\calA}^\perp}
\newcommand{\dist}{\mathrm{dist}}
\newcommand{\bbB}{\mathbb{B}}

\newcommand{\cl}[1]{\mathrm{cl}(#1)}
\newcommand{\Id}{\mathrm{Id}}
\newcommand{\Fix}{\mathrm{Fix}}
\newcommand{\ran}{\mathrm{ran}}
\newcommand{\tagmap}{\mathrm{tag}}
\newcommand{\fin}{\mathrm{fin}}
\newcommand{\inftag}{\mathrm{inf}}   

\newcommand{\Affp}{\mathrm{Aff}_+}

\newcommand{\eps}{\varepsilon}
\newcommand{\epsFamily}[1]{\{#1\}_{\varepsilon \downarrow 0}}
\newcommand{\bartheta}{\bar{\theta}}
\newcommand{\thetaEpsC}{\theta_\varepsilon^{\mathrm{c}}}
\newcommand{\thetaEpsP}{\theta_\varepsilon^{\mathrm{p}}}
\newcommand{\thetaEpsPOne}{\theta_\varepsilon^{\mathrm{p, 1}}}
\newcommand{\thetaEpsPTwo}{\theta_\varepsilon^{\mathrm{p, 2}}}
\newcommand{\TEpsC}{T_\varepsilon^{\mathrm{c}}}
\newcommand{\TEpsP}{T_\varepsilon^{\mathrm{p}}}

\newcommand{\deltaEpsC}{\delta_\varepsilon^{\mathrm{c}}}
\newcommand{\deltaEpsP}{\delta_\varepsilon^{\mathrm{p}}}
\newcommand{\deltaEpsPOne}{\delta_\varepsilon^{\mathrm{p, 1}}}
\newcommand{\deltaEpsPTwo}{\delta_\varepsilon^{\mathrm{p, 2}}}
\newcommand{\VEpsC}{V_\varepsilon^{\mathrm{c}}}
\newcommand{\VEpsP}{V_\varepsilon^{\mathrm{p}}}
\newcommand{\VEpsPOne}{V_\varepsilon^{\mathrm{p, 1}}}
\newcommand{\VEpsPTwo}{V_\varepsilon^{\mathrm{p, 2}}}

\newcommand{\vEpsP}{v_\varepsilon^{\mathrm{p}}}
\newcommand{\vEpsPOne}{v_\varepsilon^{\mathrm{p, 1}}}
\newcommand{\vEpsPTwo}{v_\varepsilon^{\mathrm{p, 2}}}
\newcommand{\SEpsOne}{\calS_\varepsilon^{1}}
\newcommand{\SEpsTwo}{\calS_\varepsilon^{2}}

\usepackage[capitalize,nameinlink,noabbrev]{cleveref}

\hypersetup{colorlinks, hypertexnames=false, pageanchor=true,
            linkcolor=blue, citecolor={green!50!black}, urlcolor=cyan,
            pdftitle={Afterimage Slow Regions in First-Order Methods for Linear Conic Programming}
            }
\mathtoolsset{centercolon}

\makeatletter
\AddToHook{cmd/appendix/before}{\def\cref@section@alias{appendix}}
\makeatother

\theoremstyle{plain}
\newtheorem{theorem}{Theorem}[section]
\newtheorem{proposition}[theorem]{Proposition}
\newtheorem{lemma}[theorem]{Lemma}

\newtheorem{definition}[theorem]{Definition}
\newtheorem{assumption}[theorem]{Assumption}
\theoremstyle{definition}

\theoremstyle{remark}

\newtheorem{example}[theorem]{Example}

\crefname{assumption}{Assumption}{Assumptions}
\crefname{problem}{Problem}{Problems}
\Crefname{assumption}{Assumption}{Assumptions}
\Crefname{problem}{Problem}{Problems}

\title{Afterimage Slow Regions in First-Order Methods \\ for Linear Conic Programming}

\author{
Shucheng Kang\thanks{School of Engineering and Applied Sciences, Harvard University.
Email: \texttt{skang1@g.harvard.edu}}
\and
Heng Yang\thanks{School of Engineering and Applied Sciences, Harvard University.
Email: \texttt{hankyang@seas.harvard.edu}}
}

\begin{document}

\maketitle

\begin{abstract}
First-order methods for linear conic programming often stall on long plateaus. Existing analyses characterize \emph{when} during a run or \emph{on which problem instances} slow convergence occurs; we instead ask \emph{where} in the state space slow convergence is present. We introduce the \emph{slow region family} for parameterized averaged fixed-point iterations. On a slow region, one step moves the state by only a small fraction of its distance to the fixed-point set, so an orbit starting there keeps almost its initial distance for arbitrarily many iterations. We then develop the \emph{afterimage} principle to construct them. A \emph{center} family and a nearby \emph{petal} family share the same limit parameter, so their residual fields become close, while their fixed-point sets or forward drifts stay far apart. The petal geometry then certifies a slow region for the center. We verify the standing assumptions for ADMM, sGS-ADMM, and PDHG, and give an LP, SOCP, and SDP gallery showing how varied slow regions can be.
\end{abstract}


\section{Introduction}
\label{sec:intro}

Linear conic programming (conic-LP), encompassing linear programming (LP),
second-order cone programming (SOCP), and semidefinite programming (SDP),
provides a versatile modeling framework for problems in machine learning
\citep{weinberger04cvpr-unsupervised-learning-sdp,lanckriet04jmlr-learn-kernal-matrix-sdp}, statistics \citep{d04nips-direct-pca-sdp},
and robust and combinatorial optimization
\citep{lobo98laa-applications-socp}. We consider the standard primal--dual pair:
\begin{align}
    \min_{x \in \calX} \inprod{c}{x}_\calX, & \quad \subject \calA x = b, \ x \in \calK, \tag{P} \label{eq:intro:conic-lp-primal} \\
    \max_{y \in \calY, s \in \calX} \inprod{b}{y}_\calY, & \quad \subject \calA^* y + s = c, \ s \in \calK^*, \tag{D} \label{eq:intro:conic-lp-dual}
\end{align}
where \(b \in \calY\), \(c \in \calX\), and \(\calA\) is a linear map from \(\calX\) to \(\calY\) with adjoint \(\calA^*\). Here \(\calX\) and \(\calY\) are finite-dimensional real Euclidean spaces (we identify \(\calY\) with \(\R^m\)), and \(\calK \subseteq \calX\) is a closed convex cone with dual cone \(\calK^*\). 

A popular route to solving conic-LP at scale is first-order methods (FOM)~\citep{ryu22book-monotone}, including the Alternating Direction Method of Multipliers (ADMM)~\citep{odonoghue16jota-scs,wen10mpc-admmsdp}, the Primal--Dual Hybrid Gradient method (PDHG)~\citep{chambolle11jmiv-pdhg,applegate21neurips-pdhg}, and symmetric Gauss--Seidel based ADMM (sGS-ADMM)~\citep{chen17mp-sgsadmm}. All of them run a dynamical system on a state space \(\calH\), driven by a fixed point map \(T_\theta: \calH \to \calH\):
\begin{align}
    \zkpo = T_\theta \zk, \label{eq:intro:fixed-point-update}
\end{align}
where \(T_\theta\) encodes the problem parameter \(\theta := (\calA, b, c)\), and the primal--dual variables \((x,y,s)\) are recovered from the state \(z \in \calH\) by a method-specific map. Under mild conditions \(\zk\) converges, as \(k \to \infty\), to a state representing a Karush--Kuhn--Tucker (KKT) point of the conic-LP. 

\paragraph{From slow phases to slow regions.} FOM convergence curves on conic-LP are rarely uniform: the KKT residual decreases quickly during an initial transient, and then nearly stalls for a long period. \emph{Temporal} explanations divide a run into stages. PDHG on LP first identifies the optimal basis and then converges linearly; the length of the first stage reflects how nearly degenerate the instance is, and the rate of the second is determined by a local sharpness constant~\citep{lu24mp-geometry}.
More generally, error-bound and metric-subregularity arguments give the same local picture for splitting methods near a sufficiently regular KKT point~\citep{han18mor-linear,kang25arxiv-admm}. \emph{Instance-geometric} explanations relate the whole complexity to the problem data, connecting restarted PDHG to the level-set geometry of the primal--dual pair~\citep{xiong24arxiv-levelset} and to condition measures that quantify how close an instance is to infeasibility or to multiple optima~\citep{xiong26mp-pdhg-lp-limiting-error-ratios,xiong26mor-accessible-complexity-bounds-rpdhg-lp}. Both answer \emph{when} in a run, or \emph{on which instances}, a method is slow. We instead ask \emph{where}:
\begin{quote}
    \emph{In which regions of the state space \(\calH\) does the iteration~\eqref{eq:intro:fixed-point-update} become very slow?}
\end{quote}
To answer this, we need a notion of slowness that belongs to a region rather than to a stage of a run. We take the cue from the local limit dynamics of ADMM on SDP~\citep{kang26arxiv-admmsdp-limitdyn}, where slowness is read from the limit residual field on the state space near a singular KKT point. There, slowness is a property of the region, not only of the instance: on one SDP, ADMM converges locally linearly near a strictly complementary KKT point~\citep{kang25arxiv-admm}, and nearly stalls near a singular one in the same optimal set. Such spatial diagnoses are common in numerical linear algebra: a few outlying small eigenvalues slow down conjugate gradient, and deflation removes that subspace to restore a fast rate~\citep{saad00sisc-deflated-cg}. What is still missing for FOM in conic-LP is a general way to construct and certify such regions, across different cones and different methods. This paper provides one. We start with a toy example that already contains the mechanism.

\paragraph{Motivating example.}
Consider~\eqref{eq:intro:conic-lp-primal} with \(\calX = \R^2\), \(\calK = \R^2_+\), \(\barcalA := [1, \ -1]\) and \(c := (1,0)\), and let only the right-hand side move:
\begin{align}
    \min_{x \in \R^2} \ x_1 \quad \subject x_1 - x_2 = \eps, \ x \ge 0, \qquad \eps \ge 0. \tag{LP\(_\eps\)} \label{eq:intro:lp-eps}
\end{align}
The primal optimum is the single point \((\eps,0)\) for every \(\eps \ge 0\). At \(\eps = 0\), however, the \emph{dual} optimal set degenerates into a whole segment. The ADMM map \(T_\theta\) inherits this degeneracy: at \(\eps = 0\) its fixed-point set is a segment \(\calL\), while for every \(\eps > 0\) it is a single point, which converges to one endpoint of \(\calL\) as \(\eps \downarrow 0\) (Figure~\ref{fig:intro:motivating}, left). In contrast, the residual field \(\delta_\theta(z) := T_\theta(z) - z\) shifts by the same vector at every state, of norm only \(\eps/\sqrt2\). As a result, if the iterates start from the other endpoint of \(\calL\), they move along \(\calL\) for about \(2\sigma/\eps\) steps, where \(\sigma\) is ADMM's penalty parameter, with \(\norm{\delta_\theta(\zk)}_2\) staying at \(\eps/\sqrt2\), before they detect the collapse. In words, a vanishing perturbation leaves a long-lived \emph{afterimage} of the unperturbed fixed-point set, and this afterimage is exactly where the iteration becomes slow.

\begin{figure}[htbp]
    \centering

    \begin{minipage}{\textwidth}
        \centering
        \begin{minipage}[b]{0.245\textwidth}
            \centering
            \includegraphics[width=\columnwidth]{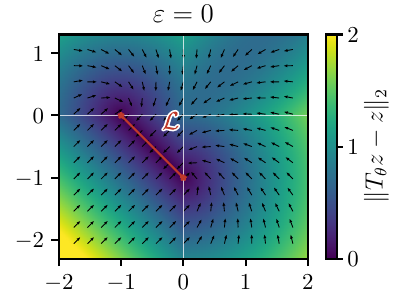}
        \end{minipage}
        \hfill
        \begin{minipage}[b]{0.245\textwidth}
            \centering
            \includegraphics[width=\columnwidth]{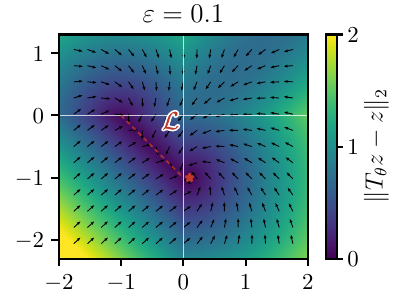}
        \end{minipage}
        \hfill
        \begin{minipage}[b]{0.245\textwidth}
            \centering
            \includegraphics[width=\columnwidth]{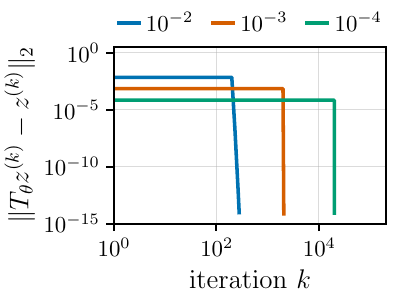}
        \end{minipage}
        \hfill
        \begin{minipage}[b]{0.245\textwidth}
            \centering
            \includegraphics[width=\columnwidth]{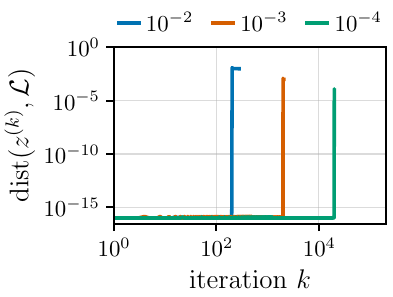}
        \end{minipage}
    \end{minipage}

    \caption{ADMM with \(\sigma = \tau = 1\). The motivating example under ADMM. \emph{Left two:} the residual field \(T_\theta z - z\) on \(\calH = \R^2\), colored by its norm, at \(\eps = 0\) and \(0.1\) --- nearly indistinguishable, yet \(\Fix (T_\theta)\) collapses from the segment \(\calL\) (solid; dashed on the right) to the single star. \emph{Right two:} from the far endpoint of \(\calL\), the same seed for every \(\eps\); each decade in \(\eps\) lengthens the plateau by a decade. \label{fig:intro:motivating}}
\end{figure}

\paragraph{Contributions.} We view the dynamical system~\eqref{eq:intro:fixed-point-update} as a \emph{static} residual field \(\delta_\theta\) on \(\calH\), and certify slow regions by comparing two different responses to a perturbation of \(\theta\): the residual field itself changes continuously, while the asymptotic geometry it carries --- its fixed points, or its forward drift --- can change abruptly.
Specifically, our contributions are threefold.

(\romannumeral1) We introduce the \emph{slow region family} for parameterized averaged fixed-point iterations under mild regularity conditions. Slowness is asymptotic and belongs to the whole family: each region is measured against the fixed-point set of its own parameter, and the family is slow if the ratio of the one-step residual to this distance tends to zero.
The regions may move with the parameter, or even diverge.
We also provide calculus rules for combining and thickening such families.

(\romannumeral2) We develop the \emph{afterimage} principle to construct such families. We compare a \emph{center} family, which carries the problems we actually solve, with a nearby \emph{petal} family. Both are indexed by a vanishing perturbation and converge to a common \emph{limit parameter}. Their residual fields therefore become close, while their fixed-point sets or forward drifts stay far apart, so that the petal geometry certifies a slow region family for the center iteration. Neither the petal nor the limiting problem needs to possess a KKT point: both may be solvable, (weakly) infeasible, non-attaining, or carry a positive duality gap. The principle covers a broad range of slow-convergence phenomena, including face selection, H\"older-sensitive displacement, and escape of fixed points to infinity, each with a bound on the resulting slowness. Figure~\ref{fig:intro:cover} summarizes the framework.

(\romannumeral3) We verify the standing assumptions for ADMM, sGS-ADMM, and PDHG under mild conditions.
We then present a gallery of LP, SOCP, and SDP examples, which exhibits residual fields, exact and approximate afterimages, and multiple certified slow regions, all matching the predicted vanishing moduli.

\begin{figure}[htbp]
    \centering
    \includegraphics[width=0.9\textwidth]{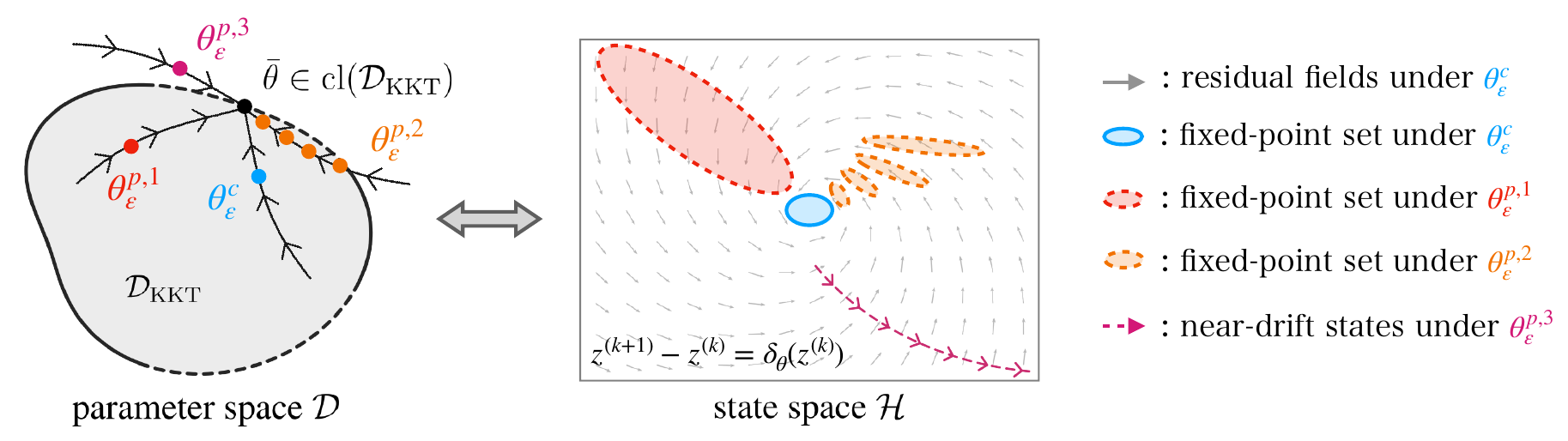}
    \caption{The afterimage framework. A \emph{center} family (the problems we actually solve) and several \emph{petal} families all converge to the same limit parameter \(\bartheta\), and induce nearly identical residual fields on bounded sets; their fixed-point sets and drift structures, however, differ sharply. Wherever a petal's structure is far from the center's, the center iterates make little progress toward their own fixed-point set for a long time, forming an \emph{afterimage slow region}. \label{fig:intro:cover}}
\end{figure}

\paragraph{Organization.} \S\ref{sec:asr} sets up the assumptions and develops the afterimage principle; \S\ref{sec:exp} presents the gallery; \S\ref{sec:conclusion} concludes. Related work is reviewed in \S\ref{app:related}.


\section{Afterimage Slow Regions in FOM for Conic-LP}
\label{sec:asr}

\subsection{Assumptions and notation}
\label{sec:asr:ass}

\paragraph{Notation.}
Throughout, \((\calX,\inprod{\cdot}{\cdot}_\calX)\), \((\calY,\inprod{\cdot}{\cdot}_\calY)\), \((\calH,\inprod{\cdot}{\cdot}_\calH)\) are finite-dimensional Hilbert spaces whose metrics are induced by the Euclidean inner product: \(\calX,\calY\) carry the conic-LP data of~\eqref{eq:intro:conic-lp-primal}--\eqref{eq:intro:conic-lp-dual}, namely \(c \in \calX\), \(b \in \calY\), \(\calA \in \calL(\calX,\calY)\) with adjoint \(\calA^*\), and a closed convex cone \(\calK \subseteq \calX\) with dual cone \(\calK^*\), while \(\calH\) carries the state \(z\) of~\eqref{eq:intro:fixed-point-update}. Here \(\calL(\calX,\calY)\) denotes the space of linear operators from \(\calX\) to \(\calY\) and \(\calL(\calH) := \calL(\calH,\calH)\); \(\normop{\cdot}\) and \(\normHS{\cdot}\) are the operator and Hilbert--Schmidt norms; \(\Id_\calH\), abbreviated \(\Id\) when unambiguous, is the identity on \(\calH\); and \(B \succeq_\calH A\), for self-adjoint \(A,B \in \calL(\calH)\), means \(\inprod{z}{(B-A)z}_\calH \ge 0\) for all \(z \in \calH\). If \(\calA\calA^* \succ_\calY 0\), we write \(\calA^\dagger := \calA^*(\calA\calA^*)^{-1} \in \calL(\calY,\calX)\) for the Moore--Penrose pseudo-inverse, \(\PA := \calA^\dagger\calA \in \calL(\calX)\) for the orthogonal projection onto \(\ran(\calA^*)\), and \(\PAp := \Id - \PA\). For a map \(T: \calH \to \calH\), \(\ran(T)\) and \(\Fix(T)\) are its range and fixed point set, and we may abbreviate \(T(z)\) as \(Tz\); \(\Affp := \{L: \R_+ \to \R_+ \mid L(r) = a + br \text{ for some } a,b \ge 0\}\) is the set of \emph{nonnegative affine moduli}. Given a norm \(\norm{\cdot}_\theta\) on \(\calH\) and a closed convex \(C \subseteq \calH\), \(\Pi_C^\theta\), \(\dist_\theta(\cdot, C)\), and \(\bbB_\theta(z,r)\) are the projection onto \(C\), the distance to \(C\), and the closed ball of radius \(r\) at \(z\), all in \(\norm{\cdot}_\theta\), with \(\Pi_C := \Pi_C^\calH\) for the Euclidean norm; \(\cl{C}\) is the closure of a set \(C\), and for a set family \(\epsFamily{C_\eps} \subset \calH\), \(C_\eps \to C\) as \(\eps \downarrow 0\) is meant in the Painlev\'e--Kuratowski sense. Finally, \([r] := \{1,\dots,r\}\), and the conic-LP pair is fully determined by the parameter \(\theta = (\calA,b,c) \in \calD := \calL(\calX,\calY) \times \calY \times \calX\), which we equip with the product norm \(\normD{\theta}^2 := \normHS{\calA}^2 + \norm{b}_\calY^2 + \norm{c}_\calX^2\).

\paragraph{Setup.} Define the set of KKT points as
\begin{align}
    \label{eq:asr:kkt}
    \KKT(\theta) := \{(x, y, s) \mid \calA x = b, \ \calA^* y + s = c, \ \inprod{x}{s}_\calX = 0, \ x \in \calK, \ s \in \calK^* \},
\end{align}
and we define \(\calD_{\KKT} := \{\theta \in \calD \mid \KKT(\theta) \ne \emptyset\}\). \(\calD_\KKT\) is not closed in general. 
We now formalize the assumptions imposed on the fixed point map in~\eqref{eq:intro:fixed-point-update}. Let \(\bartheta \in \cl{\calD_\KKT}\) and \(U \subset \calD\) be compact with \(\bartheta \in U\); \(U\) is not required to have interior. For every \(\theta \in U\), we associate a single-valued map \(T_\theta: \ \calH \to \calH\). Let \(\delta_\theta := T_\theta - \Id\) be \(T_\theta\)'s residual map (or \emph{residual field}, when viewed spatially)~\citep{kang26arxiv-admmsdp-limitdyn}. 

\begin{assumption}[Four standing assumptions]
    \label{ass:asr:T}
    The map family \(\{T_\theta\}_{\theta \in U}\) satisfies:

    (\romannumeral1) \(\Fix T_\theta \neq \emptyset\) iff \(\theta \in \calD_\KKT\) for every \(\theta\in U\).

    (\romannumeral2) For every \(\theta\in U\), \(T_\theta\) is \(\alpha_\theta\)-averaged with respect to the dynamic metric \(\inprod{\cdot}{\cdot}_\theta\) for some \(\alpha_\theta\in(0,1)\), where \(\inprod{z}{w}_\theta:=\inprod{z}{G_\theta w}_\calH\) and \(G_\theta\in\calL(\calH)\) is self-adjoint positive-definite.

    (\romannumeral3) There exist \(0<m_U\le M_U<\infty\) such that \(m_U\cdot \Id\preceq_\calH G_\theta\preceq_\calH M_U\cdot\Id\) for every \(\theta\in U\).

    (\romannumeral4) The map \(\calT:U\times\calH\to\calH\), \(\calT(\theta,z):=T_\theta z\), is Lipschitz continuous in the parameter, with a modulus \(L_U\in\Affp\):
    \begin{align}
        \label{eq:asr:ass-lip}
        \norm{T_\theta z-T_{\theta'}z}_\calH\le L_U(\norm{z}_\calH)\cdot\normD{\theta-\theta'},\quad\forall\,\theta,\theta'\in U,\ z\in\calH.
    \end{align}
\end{assumption}

Assumption~\ref{ass:asr:T} (\romannumeral2) is standard in monotone operator theory~\citep{ryu22book-monotone}, while (\romannumeral1) is method-specific and is verified for each map of Table~\ref{tab:fom} in Lemma~\ref{lem:app:fom:fix-kkt}. Assumption~\ref{ass:asr:T} (\romannumeral3) makes the dynamic norms uniformly equivalent to the ambient norm: \(\sqrt{m_U} \norm{z}_\calH \le \norm{z}_\theta \le \sqrt{M_U} \norm{z}_\calH\). Assumption~\ref{ass:asr:T} (\romannumeral4) states that nearby parameters induce uniformly close residual fields on any bounded state set,
\begin{align}
    \label{eq:asr:ass-lip-param}
    \sup_{z\in S}\norm{\delta_\theta(z)-\delta_{\theta'}(z)}_\calH\le L_U(r_S)\cdot\normD{\theta-\theta'},\qquad r_S:=\sup_{z\in S}\norm{z}_\calH,
\end{align}
for all \(\theta,\theta'\in U\) and all bounded \(S\subset\calH\), since \(\delta_\theta-\delta_{\theta'}=T_\theta-T_{\theta'}\). 


In Table~\ref{tab:fom}, we show that Assumption~\ref{ass:asr:T} holds for ADMM, PDHG, and sGS-ADMM under mild conditions. Appendix~\ref{app:fom} proves this and derives each method's modulus \(L_U\), omitted from the table. Several comments on Table~\ref{tab:fom}. First, the two additional conditions: \(\calA \calA^* \succeq_\calY \kappa_U \cdot \Id\) for ADMM/sGS-ADMM and \(\gamma_U < 1\) for PDHG are rather mild. The former is what makes the \(y\)-update~\eqref{eq:app:fom:admm-y-update} and the representation \(\calA^\dagger = \calA^*(\calA\calA^*)^{-1}\) well-defined; the latter is the standard condition making \(G_\theta \succ_\calH 0\), hence \(T_\theta\) averaged in that metric. Moreover, they do not need to hold simultaneously across different algorithms. Second, while the \(z\)-form of ADMM is the classical Douglas--Rachford reduction, we are not aware of an explicit statement that sGS-ADMM shares the same \(z\)-map. However, the two methods carry different primal--dual iterates \((x,y,s)\) along the same \(z\)-orbit, so the KKT residuals evaluated at their own iterates generally differ. Third, Assumption~\ref{ass:asr:T} does not cover every fixed point method. For instance, the exact Augmented Lagrangian Method (ALM) can fail (\romannumeral4). See Appendix~\ref{app:fom} for more details. 


\newcommand{\stk}[1]{\begin{tabular}[c]{@{}c@{}}#1\end{tabular}}

\begin{table}[t]
    \centering
    \setlength{\tabcolsep}{4pt}
    \renewcommand{\arraystretch}{1.4}
    \adjustbox{max width=\textwidth}{
    \begin{tabular}{|c|c|c|c|}
        \hline
        & \makebox[78pt]{ADMM} & \makebox[78pt]{sGS-ADMM} & PDHG \\
        \hline
        \stk{Conditions \\ on \(U\)}
        & \multicolumn{2}{c|}{\stk{\(\sigma > 0\), \(\tau \in (0,2)\), and \(\calA \calA^* \succeq_\calY \kappa_U \cdot \Id\) \\ for some \(\kappa_U > 0\) and all \(\theta \in U\)}}
        & \stk{\(\eta_x, \eta_y > 0\) and \\ \(\gamma_U := \sup_{\theta \in U} \sqrt{\eta_x \eta_y} \normop{\calA} < 1\)} \\
        \hline
        \(\calH\)
        & \multicolumn{2}{c|}{\(\calX\)}
        & \(\calX \times \calY\) \\
        \hline
        \(G_\theta\)
        & \multicolumn{2}{c|}{\(\PA + \tau^{-1} \PAp\)}
        & \(\begin{bmatrix} \eta_x^{-1} \cdot \Id_\calX & \calA^* \\ \calA & \eta_y^{-1} \cdot \Id_\calY \end{bmatrix}\) \\
        \hline
        \(\alpha_\theta\)
        & \multicolumn{2}{c|}{\(1/2 \cdot \max\{1, \tau\}\)}
        & \(1/2\) \\
        \hline
        \(m_U\)
        & \multicolumn{2}{c|}{\(\min\{1, \tau^{-1}\}\)}
        & \((1 - \gamma_U) \max\{\eta_x, \eta_y\}^{-1}\) \\
        \hline
        \(M_U\)
        & \multicolumn{2}{c|}{\(\max\{1, \tau^{-1}\}\)}
        & \((1 + \gamma_U) \min\{\eta_x, \eta_y\}^{-1}\) \\
        \hline
        \(T_\theta z\)
        & \multicolumn{2}{c|}{\stk{\(z - \PA [\Pi_{\calK}(z) - \calA^\dagger b]\) \\ \(- \, \tau \PAp [-\Pi_{\calK^*}(-z) + \sigma c]\)}}
        & \stk{\(\big(x^+, \ y + \eta_y [b - \calA (2 x^+ - x)]\big)\), \\ \(z = (x,y)\), \(x^+ := \Pi_\calK[x - \eta_x (c - \calA^* y)]\)} \\
        \hline
    \end{tabular}
    }
    \caption{Verification of Assumption~\ref{ass:asr:T} for three first-order methods. All entries are derived in Appendix~\ref{app:fom}. In their \(z\)-variable form, ADMM~\eqref{eq:app:fom:admm} and sGS-ADMM~\eqref{eq:app:fom:sgs} induce the \emph{same} fixed point map, hence share every entry above. Here \(\sigma\) is the penalty parameter and \(\tau\) is the multiplier step length of ADMM and sGS-ADMM, while \(\eta_x\) and \(\eta_y\) are the primal and the dual step sizes of PDHG; all of them are fixed over \(U\).}
    \label{tab:fom}
\end{table}

\paragraph{Partial superposition propositions.} In general the linear part of \(L_U(\cdot)\) cannot be \(0\): Appendix~\ref{app:fom} exhibits \(\theta, \theta' \in U\) along which \(\norm{T_\theta z - T_{\theta'} z}_\calH \to \infty\) as \(\norm{z}_\calH \to \infty\). Whenever it can be taken \(0\), however,~\eqref{eq:asr:ass-lip-param} becomes a state-\emph{uniform} bound, and in favorable cases the discrepancy is \emph{constant} in the state: for all \(\theta, \theta' \in U\),
\begin{align}
    \sup_{z \in \calH} \norm{\delta_\theta z - \delta_{\theta'}z}_\calH &\le l_U \cdot \normD{\theta - \theta'}, \label{eq:asr:ass-lip-strong} \\
    \delta_\theta z - \delta_{\theta'}z &= \calB(\theta - \theta'), \quad \forall\, z \in \calH, \label{eq:asr:ass-const-field}
\end{align}
where \(l_U\) depends only on \(U\), and \(\calB: \calD \to \calH\) is linear in the parameter and independent of the state; since \(\normop{\calB} < \infty\),~\eqref{eq:asr:ass-const-field} implies~\eqref{eq:asr:ass-lip-strong}. We call~\eqref{eq:asr:ass-lip-strong} and~\eqref{eq:asr:ass-const-field} the partial superposition propositions, after the Superposition Principle of electrostatic fields.

\begin{theorem}[Partial superposition propositions]
    \label{thm:asr:partial-superposition}
    Let Assumption~\ref{ass:asr:T} hold with \(U \subset \{\calA\} \times \calY \times \calX\), \ie \(\calA\) is fixed while \((b, c)\) may vary. Then (\romannumeral1)~\eqref{eq:asr:ass-const-field} holds for ADMM/sGS-ADMM; and (\romannumeral2)~\eqref{eq:asr:ass-lip-strong} holds for PDHG, upgrading to~\eqref{eq:asr:ass-const-field} when every \(\theta \in U\) also shares the same \(c\).
\end{theorem}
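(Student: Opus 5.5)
The plan is to compute \(\delta_\theta z - \delta_{\theta'} z\) directly from the explicit formulas in Table~\ref{tab:fom}, with \(\theta = (\calA, b, c)\) and \(\theta' = (\calA, b', c')\) sharing the same \(\calA\). The key point is that with \(\calA\) fixed, every state-dependent nonlinearity appears in a form that does not involve \((b,c)\), or involves them only through a nonexpansive map. We then read off either an exact constant difference or a state-uniform bound.

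\emph{Part (i), ADMM/sGS-ADMM.} Since \(\calA\) is fixed, \(\PA\), \(\PAp\) and \(\calA^\dagger\) are the same for \(\theta\) and \(\theta'\). They are well defined because \(\calA\calA^* \succeq_\calY \kappa_U \Id\). Subtracting the two expressions for \(T_\theta z\), the terms \(\PA \Pi_\calK(z)\) and \(\tau \PAp \Pi_{\calK^*}(-z)\) cancel identically. This leaves
\begin{align*}
\delta_\theta z - \delta_{\theta'} z = \PA \calA^\dagger (b - b') - \tau\sigma \PAp (c - c') = \calA^\dagger(b-b') - \tau\sigma\PAp(c-c'),
\end{align*}
using \(\PA\calA^\dagger = \calA^\dagger\). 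This expression is independent of \(z\). Define \(\calB(\Delta\calA, \Delta b, \Delta c) := \calA^\dagger \Delta b - \tau\sigma \PAp \Delta c\), with \(\calA\) the fixed operator. It is linear, and \eqref{eq:asr:ass-const-field} follows, since \(\theta - \theta'\) has zero \(\calA\)-component.

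\emph{Part (ii), PDHG.} Write \(z = (x,y)\), and let \(x^+ = \Pi_\calK[x - \eta_x(c - \calA^* y)]\) and \(x'^+ = \Pi_\calK[x - \eta_x(c' - \calA^* y)]\). Because \(\Pi_\calK\) is nonexpansive, \(\norm{x^+ - x'^+}_\calX \le \eta_x \norm{c - c'}_\calX\) for every \(z\). The difference of the two maps is
\begin{align*}
T_\theta z - T_{\theta'} z = \big(x^+ - x'^+,\ \eta_y (b - b') - 2\eta_y \calA (x^+ - x'^+)\big).
\end{align*}
Its norm is therefore at most \(\eta_x(1 + 2\eta_y\normop{\calA})\norm{c-c'}_\calX + \eta_y\norm{b-b'}_\calY\). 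Compactness of \(U\) bounds \(\normop{\calA}\); in fact it is the single fixed \(\calA\). This gives \eqref{eq:asr:ass-lip-strong} with an explicit \(l_U\). If in addition \(c = c'\) across \(U\), then \(x^+ = x'^+\) identically, and the difference reduces to \((0, \eta_y(b-b'))\). This is linear in \(\theta - \theta'\) and independent of the state, so we set \(\calB(\Delta\calA,\Delta b,\Delta c) := (0, \eta_y \Delta b)\).

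The main subtlety is not technical; it is ensuring that no hidden dependence remains. For ADMM one must check that the \(b\)-term sits outside \(\Pi_\calK\) in the \(z\)-form, which Table~\ref{tab:fom} confirms. For PDHG, \(c\) sits inside \(\Pi_\calK\), which is exactly why only the uniform bound, and not exact constancy, holds when \(c\) varies. One could remark that exact constancy genuinely fails there, for example by taking \(z\) so that \(x - \eta_x(c-\calA^*y)\) lies deep inside versus outside \(\calK\), but this is not needed for the theorem as stated.
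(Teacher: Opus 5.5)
Your proposal is correct and follows the paper's proof essentially line for line. With \(\calA\) fixed, the ADMM/sGS-ADMM difference reduces to the state-free \(\calA^\dagger(b-b') - \tau\sigma\PAp(c-c')\). For PDHG, the \(1\)-Lipschitz projection bounds the \(x\)-block defect by \(\eta_x\norm{c-c'}_\calX\), and this defect vanishes when \(c = c'\). Your explicit \(l_U\) and linear maps \(\calB\) simply spell out details the paper leaves implicit.
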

The proof is a direct computation from Table~\ref{tab:fom}, deferred to Appendix~\ref{app:fom}.

\subsection{Afterimage slow region family}
\label{sec:asr:asr}

Given \(\bartheta \in \cl{\calD_\KKT}\), a compact \(U \ni \bartheta\), and Assumption~\ref{ass:asr:T}, we take two families in \(U\), both indexed by \(\eps \downarrow 0\): the \emph{center} family \(\epsFamily{\thetaEpsC}\) and the \emph{petal} family \(\epsFamily{\thetaEpsP}\), with
\begin{align}
    \label{eq:asr:center-petal-family}
    \epsFamily{\thetaEpsC} \subset U \cap \calD_\KKT, \quad \epsFamily{\thetaEpsP} \subset U, \quad \thetaEpsC \to \bartheta, \ \thetaEpsP \to \bartheta,~\text{as}~\eps \downarrow 0.
\end{align}
The center family carries the conic-LP problems we actually study, while the petal family will certify slow regions for the center iteration. Only the center parameters are required to admit a KKT point; \(\bartheta\) and \(\thetaEpsP\) need not. We abbreviate \(T_{\thetaEpsC}, T_{\thetaEpsP}\) as \(\TEpsC, \TEpsP\), and likewise for the residual map \(\delta\), and write \(\VEpsC := \Fix(\TEpsC)\). Throughout, all conditions on \(\eps\)-indexed families are required to hold only for all sufficiently small \(\eps\).

\paragraph{Slow region family.} We define the slow region (SR) family of \(\epsFamily{\thetaEpsC}\) by a limiting process: 
\begin{definition}[Slow region family]
    \label{def:asr:sr-def}
    Suppose Assumption~\ref{ass:asr:T} holds. A set family \(\epsFamily{\calS_\eps} \subset \calH\) is called a slow region (SR) family of \(\epsFamily{\thetaEpsC}\), iff (\romannumeral1) \(\calS_\eps \ne \emptyset\) and \(\calS_\eps \cap \VEpsC = \emptyset\); (\romannumeral2) its \emph{vanishing modulus}
    \begin{align}
        \label{eq:asr:sr-def}
        \rho_\eps := \sup_{z \in \calS_\eps} \frac{\norm{\deltaEpsC(z)}_\calH}{\dist_\calH(z, \VEpsC)}
    \end{align} 
    is finite and \(\rho_\eps \to 0\) as \(\eps \downarrow 0\). 
\end{definition}
Equivalently, \(\rho_\eps\) is the least constant \(L\) with \(\norm{\deltaEpsC(z)}_\calH \le L \cdot \dist_\calH(z, \VEpsC)\) on \(\calS_\eps\); the best constant \(\kappa_\eps\) in the error bound \(\dist_\calH(z, \VEpsC) \le \kappa_\eps \norm{\deltaEpsC(z)}_\calH\) over \(\calS_\eps\) therefore satisfies \(\kappa_\eps \ge \rho_\eps^{-1}\), so an SR family is one on which this error-bound constant blows up; (\romannumeral1) already forces \(\rho_\eps > 0\). The definition is purely spatial and imposes no regularity on \(\epsFamily{\calS_\eps}\) or \(\epsFamily{\VEpsC}\); in particular, \(\calS_\eps\) collects starting states and need not be invariant: an orbit may leave \(\calS_\eps\) after one step. For the maps of Table~\ref{tab:fom}, the residual is also observable: \(\norm{\delta_\theta(z)}_\calH\) is equivalent to the projected KKT residual of a primal--dual pair computed from the state \(z\) (Lemma~\ref{lem:app:fom:kkt-bridge}).
Propositions~\ref{prop:asr:sr}--\ref{prop:asr:thickening} collect its basic properties. Due to page limits, all proofs in this subsection are deferred to Appendix~\ref{app:asr}. 
\begin{proposition}[Persistence away from the fixed point set]
    \label{prop:asr:sr}
    Under Definition~\ref{def:asr:sr-def}'s setting, fix \(\beta \in (0,1)\) and define \(N_\eps := \lfloor (1-\beta) \sqrt{m_U/M_U} \cdot \rho_\eps^{-1} \rfloor \to \infty\). Then:
    \begin{align}
        \label{eq:asr:dyn-slow}
        \dist_\calH((\TEpsC)^k (z), \VEpsC) \ge \sqrt{m_U/M_U} \beta \cdot \dist_\calH(z, \VEpsC), \quad \forall z \in \calS_\eps, \ 0 \le k \le N_\eps.
    \end{align}
\end{proposition}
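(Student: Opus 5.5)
The argument is a telescoping estimate in the ambient norm, and it uses only Assumption~\ref{ass:asr:T}~(\romannumeral2)--(\romannumeral3). Fix \(z \in \calS_\eps\), write \(z_k := (\TEpsC)^k z\), and set \(d_0 := \dist_\calH(z, \VEpsC)\).

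\emph{Step 1: the residual does not grow along the orbit.} By Assumption~\ref{ass:asr:T}~(\romannumeral2), \(\TEpsC\) is averaged, hence nonexpansive, in \(\norm{\cdot}_{\thetaEpsC}\). Therefore
\[
\norm{\deltaEpsC(z_{k+1})}_{\thetaEpsC} = \norm{\TEpsC z_{k+1} - \TEpsC z_k}_{\thetaEpsC} \le \norm{z_{k+1}-z_k}_{\thetaEpsC} = \norm{\deltaEpsC(z_k)}_{\thetaEpsC}.
\]
Converting with the norm equivalence of Assumption~\ref{ass:asr:T}~(\romannumeral3) gives, for every \(j\),
\[
\norm{\deltaEpsC(z_j)}_\calH \le \sqrt{M_U/m_U}\,\norm{\deltaEpsC(z)}_\calH \le \sqrt{M_U/m_U}\,\rho_\eps d_0 .
\]
The second inequality holds because \(z \in \calS_\eps\).

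\emph{Step 2: the orbit stays far from \(\VEpsC\).} The Euclidean distance to the closed convex set \(\VEpsC\) is \(1\)-Lipschitz. Telescoping then gives
\[
\dist_\calH(z_k,\VEpsC) \ge d_0 - \sum_{j<k}\norm{\deltaEpsC(z_j)}_\calH \ge d_0\Bigl(1 - k\sqrt{M_U/m_U}\,\rho_\eps\Bigr).
\]
For \(k\le N_\eps\) we have \(k\rho_\eps\sqrt{M_U/m_U}\le 1-\beta\). Hence \(\dist_\calH(z_k,\VEpsC)\ge \beta d_0\), and this dominates \(\sqrt{m_U/M_U}\,\beta\, d_0\) since \(m_U\le M_U\).

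\emph{Step 3: bookkeeping.} Two loose ends remain. First, \(N_\eps\to\infty\) because \(\rho_\eps\to 0\) and \(\rho_\eps>0\) by Definition~\ref{def:asr:sr-def}~(\romannumeral1). Second, \(\VEpsC\) is nonempty, closed and convex, since it is the fixed point set of a nonexpansive map and \(\thetaEpsC\in\calD_\KKT\) together with Assumption~\ref{ass:asr:T}~(\romannumeral1) give nonemptiness. This is exactly what the distance argument in Step 2 needs.

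I expect no real obstacle. The one point needing care is Step 1, where the norm change introduces the factor \(\sqrt{M_U/m_U}\) in the residual bound. That loss is absorbed by the \(\sqrt{m_U/M_U}\) already built into \(N_\eps\). The stated constant \(\sqrt{m_U/M_U}\beta\) is weaker than the \(\beta\) obtained in Step 2, so the argument has slack. I would also check the alternative route that works entirely in the \(\thetaEpsC\)-norm (Fejér monotonicity and the \(\thetaEpsC\)-distance), converting only at the end; that route appears to be the one producing the paper's constant, and the Euclidean route above suffices in any case.
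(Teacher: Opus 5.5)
Your proof is correct and is the same telescoping argument as the paper's. It combines nonexpansiveness of \(\TEpsC\) in \(\norm{\cdot}_{\thetaEpsC}\), \(1\)-Lipschitzness of the distance, and the norm equivalence of Assumption~\ref{ass:asr:T}~(\romannumeral3), and differs only in where the metric is converted. The paper telescopes \(\dist_{\thetaEpsC}(\cdot,\VEpsC)\) directly (no Fej\'er monotonicity is involved) and converts the distance at both ends, obtaining \(\dist_\calH((\TEpsC)^k z,\VEpsC) \ge \sqrt{m_U/M_U}\,d_0 - k\norm{\deltaEpsC(z)}_\calH\). You instead telescope the Euclidean distance and convert only the step lengths, so your lower bound is exactly \(\sqrt{M_U/m_U}\) times the paper's. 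With the same \(N_\eps\), that gives the sharper conclusion \(\beta\, d_0\), which implies the stated \(\sqrt{m_U/M_U}\,\beta\, d_0\).
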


\begin{proposition}[Finite unions]
    \label{prop:asr:finite-union}
    Let \(\epsFamily{\calS_\eps^i}\), \(i \in [r]\) with \(r \ge 2\), be SR families for the same \(\epsFamily{\thetaEpsC}\), with vanishing moduli \(\epsFamily{\rho_\eps^i}\). Then \(\epsFamily{\cup_{i \in [r]} \calS_\eps^i}\) is an SR family for \(\epsFamily{\thetaEpsC}\) with vanishing modulus exactly \(\epsFamily{\max_{i \in [r]}\rho_\eps^i}\).
\end{proposition}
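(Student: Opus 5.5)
The plan is to check the two clauses of Definition~\ref{def:asr:sr-def} directly for the union \(\calS_\eps := \cup_{i \in [r]} \calS_\eps^i\). No dynamics are needed, since the definition is purely spatial. Throughout we work with \(\eps\) small enough that every family \(\epsFamily{\calS_\eps^i}\) satisfies its own conditions. This is possible because there are only finitely many families, so we can take the minimum of the finitely many thresholds.

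For clause (\romannumeral1), nonemptiness is inherited from any single \(\calS_\eps^i\). Disjointness from \(\VEpsC\) also carries over, since
\[
\calS_\eps \cap \VEpsC = \cup_{i \in [r]} \bigl(\calS_\eps^i \cap \VEpsC\bigr) = \emptyset.
\]
In particular \(\dist_\calH(z, \VEpsC) > 0\) for every \(z \in \calS_\eps\), because \(\VEpsC\) is closed. It is the fixed point set of a continuous (averaged, hence nonexpansive in \(\norm{\cdot}_\theta\)) map. So the ratio in~\eqref{eq:asr:sr-def} is well defined pointwise on the union.

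For clause (\romannumeral2), we use the elementary identity that a supremum over a finite union equals the maximum of the suprema over the pieces. Writing \(f_\eps(z) := \norm{\deltaEpsC(z)}_\calH / \dist_\calH(z, \VEpsC)\), we get
\[
\sup_{z \in \calS_\eps} f_\eps(z) = \max_{i \in [r]} \sup_{z \in \calS_\eps^i} f_\eps(z) = \max_{i \in [r]} \rho_\eps^i.
\]
Both inequalities are immediate. Each \(\calS_\eps^i \subseteq \calS_\eps\) gives \(\ge\), and each \(z \in \calS_\eps\) lies in some \(\calS_\eps^i\), which gives \(\le\). This shows the modulus is \emph{exactly} the maximum, as claimed. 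It is finite because it is a maximum of finitely many finite numbers. It tends to \(0\) because \(\max_i \rho_\eps^i \le \sum_i \rho_\eps^i \to 0\).

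There is no real obstacle here. The only points needing care are bookkeeping ones. First, the "sufficiently small \(\eps\)" convention must be applied uniformly over the \(r\) families, which finiteness of \(r\) guarantees. Second, the center family \(\epsFamily{\thetaEpsC}\) must be the same for all \(i\), so that every modulus is computed against the same \(\VEpsC\) and the same residual \(\deltaEpsC\). This is what makes the ratio function \(f_\eps\) common to all pieces.
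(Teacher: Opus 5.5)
Your proposal is correct and follows essentially the same route as the paper's proof. Both check clause (i) directly, bound the ratio on the union by \(\max_{i}\rho^i_\eps\) because each point lies in some \(\calS^i_\eps\), and obtain the reverse inequality from \(\calS^i_\eps \subseteq \cup_{j}\calS^j_\eps\); finiteness of \([r]\) then gives \(\max_i \rho^i_\eps \to 0\).
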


Proposition~\ref{prop:asr:finite-union} does not extend verbatim to countably many families (Appendix~\ref{app:asr}).

\begin{proposition}[Thickening]
    \label{prop:asr:thickening}
    Under Definition~\ref{def:asr:sr-def}'s setting, let \(\epsFamily{a_\eps}\) be a real number family with \(0 \le a_\eps < 1\) and \(a_\eps \to 0\) as \(\eps \downarrow 0\). Define \(\tilde{\calS}_\eps := \{z \in \calH \mid \exists q \in \calS_\eps, \mathrm{~s.t.~} \norm{z - q}_\calH \le a_\eps \cdot \dist_\calH(q, \VEpsC) \}\). Then \(\epsFamily{\tilde{\calS}_\eps}\) is also an SR family of \(\epsFamily{\thetaEpsC}\), with vanishing modulus \(\tilde{\rho}_\eps \le (\rho_\eps + 2\sqrt{M_U/m_U} \cdot a_\eps) / (1 - a_\eps)\). 
\end{proposition}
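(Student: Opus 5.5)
The plan is to work directly from Definition~\ref{def:asr:sr-def}: check the two set-theoretic conditions for \(\tilde{\calS}_\eps\), then bound the ratio \(\norm{\deltaEpsC(z)}_\calH / \dist_\calH(z,\VEpsC)\) pointwise on \(\tilde{\calS}_\eps\) by comparing \(z\) with its anchor \(q\). Throughout, write \(d(\cdot) := \dist_\calH(\cdot,\VEpsC)\).

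\emph{Nonemptiness and disjointness.} Taking \(z = q\) shows \(\calS_\eps \subseteq \tilde{\calS}_\eps\), so \(\tilde{\calS}_\eps \neq \emptyset\). Since \(\thetaEpsC \in \calD_\KKT\), Assumption~\ref{ass:asr:T}(\romannumeral1) gives \(\VEpsC \neq \emptyset\). Because \(\TEpsC\) is averaged, hence continuous, its fixed point set \(\VEpsC\) is closed. For \(q \in \calS_\eps\) we have \(q \notin \VEpsC\), and closedness then gives \(d(q) > 0\).

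Now fix \(z \in \tilde{\calS}_\eps\) with anchor \(q\). The map \(d\) is \(1\)-Lipschitz, so
\[
d(z) \ge d(q) - \norm{z-q}_\calH \ge (1-a_\eps)\, d(q) > 0.
\]
Hence \(z \notin \VEpsC\), and condition (\romannumeral1) holds. The same inequality is the lower bound for the denominator.

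\emph{Residual bound.} The key step is a Lipschitz bound on \(\deltaEpsC\) in the ambient norm.

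1. By Assumption~\ref{ass:asr:T}(\romannumeral2), \(\TEpsC\) is averaged, hence nonexpansive, in \(\norm{\cdot}_{\thetaEpsC}\).
2. Therefore \(\deltaEpsC = \TEpsC - \Id\) is \(2\)-Lipschitz in that norm.
3. Assumption~\ref{ass:asr:T}(\romannumeral3) gives \(\sqrt{m_U}\norm{\cdot}_\calH \le \norm{\cdot}_{\thetaEpsC} \le \sqrt{M_U}\norm{\cdot}_\calH\). Converting both sides yields
\[
\norm{\deltaEpsC(z)-\deltaEpsC(q)}_\calH \le 2\sqrt{M_U/m_U}\,\norm{z-q}_\calH \le 2\sqrt{M_U/m_U}\, a_\eps\, d(q).
\]
4. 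The definition of \(\rho_\eps\) gives \(\norm{\deltaEpsC(q)}_\calH \le \rho_\eps\, d(q)\).

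By the triangle inequality,
\[
\norm{\deltaEpsC(z)}_\calH \le \bigl(\rho_\eps + 2\sqrt{M_U/m_U}\,a_\eps\bigr)\, d(q).
\]
Dividing by \(d(z) \ge (1-a_\eps)\,d(q)\) and taking the supremum over \(z\) gives the claimed bound on \(\tilde\rho_\eps\).

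\emph{Conclusion.} The bound is finite because \(\rho_\eps\) is finite and \(a_\eps < 1\). It tends to \(0\) because \(\rho_\eps \to 0\) and \(a_\eps \to 0\), so condition (\romannumeral2) holds.

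The only delicate point is the metric conversion in step 3. Nonexpansiveness holds only in the \(\eps\)-dependent norm \(\norm{\cdot}_{\thetaEpsC}\), so the uniform bounds \(m_U, M_U\) are needed to keep the constant \(2\sqrt{M_U/m_U}\) independent of \(\eps\). Everything else is the triangle inequality.
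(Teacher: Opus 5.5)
Your proposal is correct and follows essentially the same route as the paper. Both arguments use the $2\sqrt{M_U/m_U}$-Lipschitz bound on \(\deltaEpsC\) from nonexpansiveness in the dynamic metric, the $1$-Lipschitz lower bound \(\dist_\calH(z,\VEpsC) \ge (1-a_\eps)\dist_\calH(q,\VEpsC)\) (which, with \(\calS_\eps \subseteq \tilde{\calS}_\eps\), also settles condition (i)), and a final triangle inequality; your explicit appeal to closedness and nonemptiness of \(\VEpsC\) to get a positive distance at $q$ only makes precise a step the paper leaves implicit.
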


\paragraph{Afterimage slow region family.} 
Finding an SR family directly is hard. However, if a petal family \(\epsFamily{\thetaEpsP}\) as in~\eqref{eq:asr:center-petal-family} has limiting behavior far from the center's, the SR family can often be certified. Before formalizing this idea, we record a fact from monotone operator theory, again proved in Appendix~\ref{app:asr}. 
\begin{proposition}[Forward drift]
    \label{prop:asr:forward-drift}
    Under Assumption~\ref{ass:asr:T}, take any \(\theta \in U\), and let \(\zkpo = T_\theta \zk\) from an arbitrary \(z^{(0)} \in \calH\). Then \(\delta_\theta \zk \to v_\theta := \Pi_{\cl{\ran(\delta_\theta)}}^\theta (0)\) as \(k \to \infty\), and \(\norm{\delta_\theta \zk}_\theta\) is nonincreasing in \(k\) with limit \(\norm{v_\theta}_\theta\). If \(\delta_\theta(z) = v_\theta\) for some \(z \in \calH\), then \(\delta_\theta(z + t v_\theta) = v_\theta\) for every \(t \ge 0\). Finally, if \(\Fix(T_\theta) = \emptyset\), there is a sequence \(\{z_j\} \subset \calH\) with \(\norm{z_j}_\calH \to \infty\) and \(\delta_\theta(z_j) \to v_\theta\). 
    We call \(v_\theta\) the \emph{forward drift}. 
\end{proposition}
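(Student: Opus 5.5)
Throughout, I fix \(\theta\) and work entirely in the Hilbert space \((\calH,\inprod{\cdot}{\cdot}_\theta)\). Assumption~\ref{ass:asr:T}~(\romannumeral2) says \(T_\theta\) is \(\alpha_\theta\)-averaged there, so \(\delta_\theta = T_\theta - \Id\) equals \(-\alpha_\theta(\Id - N)\) for some \(\theta\)-nonexpansive \(N\). Consequently \(-\delta_\theta\) is a continuous monotone (indeed \(\frac{1-\alpha_\theta}{\alpha_\theta}\)-cocoercive, up to scaling) operator, hence maximally monotone. The classical result of Pazy (and Baillon--Bruck--Reich for averaged maps) on the asymptotic behavior of iterates of nonexpansive/averaged maps then gives \(z^{(k)}/k \to v_\theta\) and, under averagedness, \(\zkpo - \zk \to v_\theta\), where \(v_\theta\) is the minimal-\(\theta\)-norm element of \(\cl{\ran(\delta_\theta)}\). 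This set is convex because it is the closure of the range of a maximally monotone operator in finite dimensions, via Minty/Brezis--Haraux. I would cite this rather than reprove it. If a self-contained argument is wanted, I would follow the standard route: monotonicity of \(\norm{\delta_\theta \zk}_\theta\), then a Fejér-type inequality showing the limit of the norms equals \(\norm{v_\theta}_\theta\), and finally strict convexity of the norm to pin down the limit vector.

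The monotonicity claim is direct. Writing \(\delta_\theta\zkpo = T_\theta \zkpo - T_\theta \zk + \zk - \zkpo\) is not the right grouping; instead I would use averagedness in the form
\[
\norm{T z - T w}_\theta^2 + \tfrac{1-\alpha}{\alpha}\norm{(z - Tz) - (w - Tw)}_\theta^2 \le \norm{z - w}_\theta^2,
\]
applied with \(z = \zkpo\) and \(w = \zk\). This yields \(\norm{\delta_\theta \zkpo}_\theta \le \norm{\delta_\theta \zk}_\theta\), since \(T\zkpo - T\zk = \zkpo{}^{+}-\zkpo\) plus \(\zkpo-\zk\) rearranges suitably. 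More simply, \(T\) is nonexpansive, so \(\norm{T\zkpo - T\zk}_\theta \le \norm{\zkpo - \zk}_\theta\), which is exactly \(\norm{\delta_\theta\zkpo}_\theta \le \norm{\delta_\theta \zk}_\theta\). The limit statement \(\norm{v_\theta}_\theta\) then follows from the convergence \(\delta_\theta\zk\to v_\theta\).

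For the translation claim, suppose \(\delta_\theta(z) = v := v_\theta\) and set \(w = z + tv\). Monotonicity of \(-\delta_\theta\) gives \(\inprod{\delta_\theta(w) - v}{tv}_\theta \ge 0\). The variational characterization of the projection onto the convex set \(\cl{\ran\delta_\theta}\) gives \(\inprod{\delta_\theta(w) - v}{v}_\theta \ge 0\) as well. Cocoercivity gives \(\inprod{\delta_\theta(z) - \delta_\theta(w)}{z - w}_\theta \ge c\,\norm{\delta_\theta(z)-\delta_\theta(w)}_\theta^2\) with \(c>0\). Substituting \(z - w = -tv\), this reads \(-t\inprod{v - \delta_\theta(w)}{v}_\theta \ge c\norm{v-\delta_\theta(w)}_\theta^2\). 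The left side is \(\le 0\) by the projection inequality, so \(\delta_\theta(w) = v\). This cocoercivity step is the crux, and I must check the sign convention: for averaged \(T\), \(\Id - T\) is \(\frac{1}{2\alpha}\)-cocoercive, and this is the form I would use.

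For the last claim, when \(\Fix T_\theta = \emptyset\) the orbit \(\zk\) must be unbounded. Otherwise Browder's theorem, or the fact that bounded orbits of averaged maps converge to fixed points (Krasnosel'skii--Mann), would produce a fixed point. So I pick a subsequence with \(\norm{z^{(k_j)}}_\calH \to \infty\) and set \(z_j := z^{(k_j)}\). Then \(\delta_\theta(z_j)\to v_\theta\) by the first part, and the \(\theta\)-norm statements transfer to \(\norm{\cdot}_\calH\) by Assumption~\ref{ass:asr:T}~(\romannumeral3). The main obstacle I anticipate is the first part, namely convergence of the whole vector \(\delta_\theta\zk\) rather than just its norm. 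I would try first to cite Baillon--Bruck--Reich (1978) for averaged maps in Hilbert space. As a fallback, I would combine the limit of the norms with the inequality \(\inprod{\delta_\theta\zk - v}{v}_\theta\ge 0\) and the expansion \(\norm{\delta_\theta\zk - v}^2 = \norm{\delta_\theta\zk}^2 - \norm{v}^2 - 2\inprod{\delta_\theta\zk - v}{v} \le \norm{\delta_\theta\zk}^2-\norm v^2\to 0\).
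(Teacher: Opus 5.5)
Your first two claims follow the paper's proof. You cite Pazy (or Minty) for convexity of \(\cl{\ran(\delta_\theta)}\) and Baillon--Bruck--Reich for \(\delta_\theta\zk \to v_\theta\). You get monotonicity of \(\norm{\delta_\theta\zk}_\theta\) from nonexpansiveness. You get the ray statement from the projection inequality plus cocoercivity of \(\Id - T_\theta\).

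As written, however, the ray step has two sign errors. Your displayed inequality \(\inprod{\delta_\theta(z)-\delta_\theta(w)}{z-w}_\theta \ge c\norm{\delta_\theta(z)-\delta_\theta(w)}_\theta^2\) is cocoercivity of \(T_\theta - \Id\), which fails already for \(T_\theta \equiv 0\). And for your left side \(-t\inprod{v_\theta-\delta_\theta(w)}{v_\theta}_\theta\), the projection inequality gives \(\ge 0\), not \(\le 0\). Your monotonicity line is reversed too: it should read \(\inprod{\delta_\theta(w)-v_\theta}{t v_\theta}_\theta \le 0\). The two main errors cancel in the conclusion. With \(w := z + t v_\theta\), \(\delta_\theta(z) = v_\theta\), and the convention you name yourself, the correct chain is
\[
0 \;\ge\; t\,\inprod{v_\theta - \delta_\theta(w)}{v_\theta}_\theta \;=\; \inprod{\delta_\theta(w) - \delta_\theta(z)}{z - w}_\theta \;\ge\; \tfrac{1}{2\alpha_\theta}\,\norm{v_\theta - \delta_\theta(w)}_\theta^2 ,
\]
which is exactly the paper's argument.

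Your fallback for the first part only reduces vector convergence to \(\norm{\delta_\theta\zk}_\theta \to \norm{v_\theta}_\theta\). That norm limit is where averagedness is essential: for a quarter-turn rotation of \(\R^2\) the norms stay constant while \(v_\theta = 0\). So the citation carries the weight, as it does in the paper.

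For the last claim you take a genuinely different route. You show the orbit itself is unbounded when \(\Fix(T_\theta) = \emptyset\), extract \(z_j := z^{(k_j)}\), and inherit \(\delta_\theta(z_j) \to v_\theta\) from the first part. The fact you need is that a nonexpansive map on a Hilbert space with one bounded orbit has a fixed point. Krasnosel'skii--Mann is the wrong citation, since it presupposes \(\Fix(T_\theta) \ne \emptyset\). Browder's theorem needs an invariant bounded closed convex set, which you would still have to construct. The clean argument is the asymptotic center. Let \(y^\star\) be the unique minimizer of the strongly convex function \(y \mapsto \limsup_k \norm{\zk - y}_\theta^2\). Since \(\norm{\zkpo - T_\theta y^\star}_\theta \le \norm{\zk - y^\star}_\theta\) and an index shift does not change the \(\limsup\), \(T_\theta y^\star\) is again a minimizer, so \(T_\theta y^\star = y^\star\).

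The paper instead splits on attainment. If \(v_\theta \in \ran(\delta_\theta)\), then \(v_\theta \ne 0\), and the ray statement gives \(z_j := z + j v_\theta\) with \(\delta_\theta(z_j) = v_\theta\) exactly. Otherwise, any \(w_j\) with \(\delta_\theta(w_j) \to v_\theta\) must be unbounded, by compactness and continuity.

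The paper's route is elementary and independent of the orbit-convergence theorem. In the attained case it produces points where the drift is attained exactly, not just asymptotically. Yours is shorter once the first part is granted and needs no compactness. It also gives the dynamically stronger statement that every orbit is unbounded, with residuals tending to the drift.
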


Under Assumption~\ref{ass:asr:T}, \(\delta_\theta\) and \(v_\theta\) are well-defined whether or not \(\theta\) yields a KKT point. Define \(\tagmap(\theta) \in \{0,+\} \times \{\fin, \inftag\}\) as follows: the first entry is \(0\) iff \(v_\theta = 0\), and the second is \(\fin\) iff \(v_\theta \in \ran(\delta_\theta)\), \ie the projection is attained. The tag depends on the map family and its metric, not on \(\theta\) alone. Then \(\tagmap(\theta) = (0, \fin)\) iff \(\theta \in \calD_\KKT\); the other three cases collect the pathological parameters --- infeasible, weakly infeasible, nonattaining, or carrying a positive duality gap --- and all three are realized in \S\ref{sec:exp} and Appendix~\ref{app:exp}. 
For comparison, homogeneous self-dual embeddings recover primal--dual solutions or certificates of strong infeasibility, while weak infeasibility and other duality pathologies may lead to a degenerate embedding \citep{odonoghue16jota-scs,permenter17siopt-conic-hsde-facial}. 
The tags \(\tagmap(\bartheta)\), \(\tagmap(\thetaEpsC)\) and \(\tagmap(\thetaEpsP)\) may all differ. Abbreviating \(v_{\thetaEpsP}\) as \(\vEpsP\), we can now define the afterimage slow region (ASR) family. 

\begin{definition}[Afterimage slow region family]
    \label{def:asr:asr}
    Under Assumption~\ref{ass:asr:T}, let \(\epsFamily{\calS_\eps}\) be a slow region (SR) family for the center parameter family \(\epsFamily{\thetaEpsC}\). We call \(\epsFamily{\calS_\eps}\) an afterimage slow region (ASR) family w.r.t. a petal parameter family \(\epsFamily{\thetaEpsP}\), iff there exists a nonnegative real number family \(\epsFamily{\mu_\eps}\), with \(\mu_\eps \to 0\) as \(\eps \downarrow 0\), s.t. 
    \begin{align}
        \label{eq:asr:asr-def}
        \norm{\deltaEpsP(z) - \vEpsP}_\calH \le \mu_\eps, \quad \forall z \in \calS_\eps.
    \end{align} 
\end{definition}

Similar to \(\VEpsC = \Fix(\TEpsC)\), we let \(V_{\thetaEpsP} := \{z \in \calH \mid \deltaEpsP(z) = \vEpsP\}\), the fixed point set of the shifted map \(z \mapsto \TEpsP z - \vEpsP\), and abbreviate it as \(\VEpsP\). If \(\tagmap(\thetaEpsP)\) is \((0,\fin)\) or \((+,\fin)\) for all \(\eps\), then \(\VEpsP \ne \emptyset\); choosing \(\calS_\eps \subseteq \VEpsP\), as every example below with a finite petal tag does, we may then set \(\mu_\eps \equiv 0\). When it is \((0,\inftag)\) or \((+,\inftag)\), since \(\vEpsP\) is non-attainable and \(\VEpsP = \emptyset\), \(\mu_\eps\) must be positive. To see how the petal family helps to certify the ASR family, we strengthen the condition in~\eqref{eq:asr:sr-def} by asking for a family \(\epsFamily{\hat{\rho}_\eps}\) with \(\hat{\rho}_\eps \to 0\) and
\begin{align}
    \label{eq:asr:asr-strength}
    \hat{\rho}_\eps \cdot \dist_\calH(z, \VEpsC) \ge \norm{\deltaEpsP(z)}_\calH + \norm{\deltaEpsP(z) - \deltaEpsC(z)}_\calH, \quad \forall z \in \calS_\eps.
\end{align}
By the triangle inequality, \(\rho_\eps \le \hat{\rho}_\eps\); hence~\eqref{eq:asr:asr-strength} certifies~\eqref{eq:asr:sr-def} and exposes two residual field components we need to control. Intuitively, \(\norm{\deltaEpsP(z) - \deltaEpsC(z)}_\calH\) is controlled by~\eqref{eq:asr:ass-lip-param}, while~\eqref{eq:asr:asr-def} gives \(\norm{\deltaEpsP(z)}_\calH \le \norm{\vEpsP}_\calH + \mu_\eps\). We now illustrate Definition~\ref{def:asr:asr} through three special cases, drawn in Figure~\ref{fig:asr:three-cases}: \(\VEpsC\) collapses onto a proper face of \(\Fix(T_\bartheta)\), which is closed and convex (Lemma~\ref{lem:app:asr:fix-cc}); the petal fixed points are much farther from \(\Fix(T_\bartheta)\) than the data perturbation; and \(\VEpsC\) escapes to infinity. Each is only a sufficient condition: \S\ref{sec:exp} also constructs regions that none of the three covers, working directly from Definitions~\ref{def:asr:sr-def} and~\ref{def:asr:asr}. 

\begin{figure}[t]
    \centering
    \includegraphics[width=\textwidth]{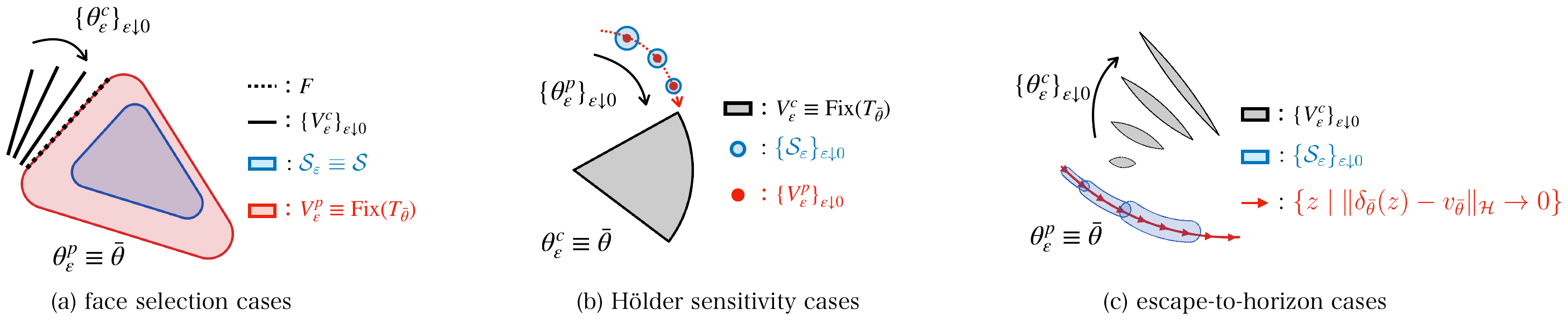}
    \caption{The three cases of Theorems~\ref{thm:asr:face-selection}--\ref{thm:asr:escape-to-horizon}, all drawn in the state space \(\calH\). \label{fig:asr:three-cases}}
\end{figure}

\begin{theorem}[Face selection cases]
    \label{thm:asr:face-selection}
    Under Assumption~\ref{ass:asr:T}, set \(\tagmap(\bartheta) = (0, \fin)\) and \(\thetaEpsP \equiv \bartheta\). Suppose \(\VEpsC \to F\) as \(\eps \downarrow 0\), where \(F\) is a proper face of \(\Fix(T_\bartheta)\), and take \(\calS_\eps \equiv \calS \subset \Fix(T_\bartheta)\) nonempty, bounded, with \(\inf_{z \in \calS}\dist_\calH(z, F) > 0\). Then \(\epsFamily{\calS_\eps}\) is an ASR family with \(\rho_\eps = O(\normD{\thetaEpsC - \bartheta})\).
\end{theorem}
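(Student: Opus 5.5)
We verify Definitions~\ref{def:asr:sr-def} and~\ref{def:asr:asr} directly, taking the petal to be the constant family \(\thetaEpsP \equiv \bartheta\). Because \(\tagmap(\bartheta) = (0,\fin)\), we have \(v_{\bartheta} = 0\) and \(V_{\bartheta} = \Fix(T_\bartheta)\). The choice \(\calS \subset \Fix(T_\bartheta)\) then gives \(\deltaEpsP(z) = 0 = \vEpsP\) on \(\calS\), so~\eqref{eq:asr:asr-def} holds with \(\mu_\eps \equiv 0\). It therefore suffices to show that \(\calS\) is an SR family. We will do this through the strengthened bound~\eqref{eq:asr:asr-strength}, whose first term \(\norm{\deltaEpsP(z)}_\calH\) vanishes on \(\calS\).

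\emph{Numerator.} Let \(r_\calS := \sup_{z\in\calS}\norm{z}_\calH < \infty\). For \(z \in \calS\), the bound~\eqref{eq:asr:ass-lip-param} gives
\[
\norm{\deltaEpsC(z)}_\calH = \norm{\deltaEpsC(z) - \delta_{\bartheta}(z)}_\calH \le L_U(r_\calS)\,\normD{\thetaEpsC - \bartheta}.
\]
This bound is uniform on \(\calS\) and tends to \(0\).

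\emph{Denominator.} This is the main step. We need a uniform positive lower bound \(\dist_\calH(z,\VEpsC) \ge d/2\) for all \(z\in\calS\) and all small \(\eps\), where \(d := \inf_{z\in\calS}\dist_\calH(z,F) > 0\). The intended argument uses Painlev\'e--Kuratowski convergence \(\VEpsC \to F\), which gives \(\limsup_\eps \VEpsC \subseteq F\), together with boundedness of \(\calS\). We argue by contradiction. Suppose there are \(\eps_j \downarrow 0\), \(z_j\in\calS\) and \(w_j\in V^{\mathrm c}_{\eps_j}\) with \(\norm{z_j-w_j}_\calH < d/2\). Then \(\{w_j\}\) is bounded, since \(\calS\) is bounded, so a subsequence converges to some \(\bar w\). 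The outer-limit inclusion places \(\bar w \in F\). Passing to a further subsequence \(z_j \to \bar z \in \cl{\calS}\), we get \(\dist_\calH(\bar z, F) \le d/2\). This contradicts \(\dist_\calH(\cdot,F) \ge d\) on \(\cl{\calS}\), which holds by continuity of the distance function. The same bound also gives \(\calS\cap\VEpsC=\emptyset\) for small \(\eps\), so condition (\romannumeral1) of Definition~\ref{def:asr:sr-def} holds; nonemptiness of \(\calS\) is part of the hypothesis.

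Combining the two bounds yields
\[
\rho_\eps \le \hat\rho_\eps := \frac{2L_U(r_\calS)}{d}\,\normD{\thetaEpsC-\bartheta} = O(\normD{\thetaEpsC-\bartheta}) \to 0,
\]
which completes the argument. The face and proper-face structure of \(F\), and the closed convexity of \(\Fix(T_\bartheta)\) from Lemma~\ref{lem:app:asr:fix-cc}, are not needed beyond guaranteeing that a set \(\calS\) with \(d>0\) exists. We expect the main obstacle to be the compactness argument in the denominator step: only the outer limit of \(\VEpsC\) is used, and the boundedness of \(\calS\) is essential there, since it keeps the nearby points \(w_j\) from escaping to infinity.
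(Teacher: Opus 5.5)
Your proof is correct and has the paper's skeleton. You get \(\mu_\eps \equiv 0\) from \(\calS \subset \Fix(T_\bartheta)\), bound the numerator by \eqref{eq:asr:ass-lip-param} at radius \(r_\calS\), and divide by a lower bound on \(\inf_{z\in\calS}\dist_\calH(z,\VEpsC)\). The one genuine difference is how you obtain that denominator bound.

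The paper quotes a cited result: Painlev\'e--Kuratowski convergence \(\VEpsC \to F\) with \(F\) nonempty makes \(\dist_\calH(\cdot,\VEpsC) \to \dist_\calH(\cdot,F)\) uniformly on bounded sets. This gives the two-sided limit \(\inf_{z\in\calS}\dist_\calH(z,\VEpsC) \to d\) (the paper's \(d_\star\)), hence the asymptotic constant \(L_U(r_\calS)/d\).

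You instead prove by compactness only the one-sided estimate that an upper bound on \(\rho_\eps\) needs. For that you use only the outer-limit inclusion \(\limsup_{\eps\downarrow 0}\VEpsC \subseteq F\); the inner-limit half of the convergence never enters. Your route is therefore self-contained and shows the theorem holds under that weaker hypothesis.

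The cost is the factor \(2\) in your constant. It disappears if you run the same contradiction with \((1-\eta)d\) in place of \(d/2\) for arbitrary \(\eta \in (0,1)\). That gives \(\liminf_{\eps\downarrow 0}\inf_{z\in\calS}\dist_\calH(z,\VEpsC) \ge d\) and recovers the paper's constant in the limit.
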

\begin{proof}
    Painlev\'e--Kuratowski convergence \(\VEpsC \to F\) with \(F\) nonempty makes \(\dist_\calH(\cdot, \VEpsC) \to \dist_\calH(\cdot, F)\) uniformly on bounded sets~\citep[Theorem 4.35]{rockafellar98book-variational-analysis}. As \(\calS\) is bounded, \(d_\eps := \inf_{z \in \calS}\dist_\calH(z, \VEpsC) \to \inf_{z \in \calS}\dist_\calH(z, F) =: d_\star > 0\), so \(d_\eps > 0\) and \(\calS \cap \VEpsC = \emptyset\) for small \(\eps\). Put \(r_\calS := \sup_{z \in \calS}\norm{z}_\calH < \infty\). Since \(\thetaEpsP \equiv \bartheta\) and \(\calS \subset \Fix(T_\bartheta)\), \(\deltaEpsP \equiv 0\) on \(\calS\), giving~\eqref{eq:asr:asr-def} with \(\mu_\eps \equiv 0\); and~\eqref{eq:asr:ass-lip-param} bounds \(e_\eps := \sup_{z \in \calS}\norm{\deltaEpsC(z) - \deltaEpsP(z)}_\calH \le L_U(r_\calS) \cdot \normD{\thetaEpsC - \bartheta}\). Take \(\hat{\rho}_\eps := e_\eps / d_\eps\), so \(\rho_\eps \le \hat{\rho}_\eps\). Every \(z \in \calS\) then obeys \(\hat{\rho}_\eps \cdot \dist_\calH(z, \VEpsC) \ge e_\eps \ge \norm{\deltaEpsP(z)}_\calH + \norm{\deltaEpsP(z) - \deltaEpsC(z)}_\calH\), which is~\eqref{eq:asr:asr-strength}, and \(\rho_\eps = O(\normD{\thetaEpsC - \bartheta})\) since \(d_\eps \to d_\star > 0\).
\end{proof}
\begin{theorem}[H\"older sensitivity cases]
    \label{thm:asr:holder-sensitivity}
    Under Assumption~\ref{ass:asr:T}, set \(\tagmap(\bartheta) = (0, \fin)\) and \(\thetaEpsC \equiv \bartheta\). Suppose that for some \(\gamma \in (0,1)\) and all sufficiently small \(\eps\): \(\thetaEpsP \ne \bartheta\), \(\tagmap(\thetaEpsP) = (0, \fin)\), and there are \emph{witnesses} \(z_\eps \in \VEpsP\) with \(\sup_\eps \norm{z_\eps}_\calH < \infty\) and \(\dist_\calH(z_\eps, \Fix(T_\bartheta)) = \Theta(\normD{\thetaEpsP - \bartheta}^\gamma)\). Then there is an ASR family \(\epsFamily{\calS_\eps}\) with \(\rho_\eps = O(\normD{\thetaEpsP - \bartheta}^{1-\gamma})\); under~\eqref{eq:asr:ass-lip-strong} the witnesses need not be bounded.
\end{theorem}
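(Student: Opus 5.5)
The construction takes each slow region to be the single witness itself, \(\calS_\eps := \{z_\eps\}\), and mirrors the proof of Theorem~\ref{thm:asr:face-selection} with the roles of center and petal exchanged. Put \(\Delta_\eps := \normD{\thetaEpsP - \bartheta}\), so \(\Delta_\eps > 0\) for small \(\eps\), and \(\Delta_\eps \to 0\) because \(\thetaEpsP \to \bartheta\).

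Since \(\thetaEpsC \equiv \bartheta\), we have \(\VEpsC = \Fix(T_\bartheta)\), which is nonempty because \(\tagmap(\bartheta) = (0,\fin)\). The hypothesis \(\dist_\calH(z_\eps, \Fix(T_\bartheta)) = \Theta(\Delta_\eps^\gamma)\) gives a constant \(c > 0\) with \(\dist_\calH(z_\eps, \VEpsC) \ge c\,\Delta_\eps^\gamma > 0\). Hence \(\calS_\eps\) is nonempty and disjoint from \(\VEpsC\), which is condition (\romannumeral1) of Definition~\ref{def:asr:sr-def}.

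For the afterimage condition, note that \(\tagmap(\thetaEpsP) = (0,\fin)\) means \(\vEpsP = 0\) and \(\VEpsP = \Fix(\TEpsP)\). Thus \(z_\eps \in \VEpsP\) gives \(\deltaEpsP(z_\eps) = 0\), and~\eqref{eq:asr:asr-def} holds with \(\mu_\eps \equiv 0\).

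For the vanishing modulus, set \(r := \sup_\eps \norm{z_\eps}_\calH < \infty\). Then~\eqref{eq:asr:ass-lip-param} gives
\[
\norm{\deltaEpsC(z_\eps)}_\calH = \norm{\delta_\bartheta(z_\eps) - \deltaEpsP(z_\eps)}_\calH \le L_U(r)\,\Delta_\eps .
\]
Now take \(\hat\rho_\eps := L_U(r)\Delta_\eps / (c\,\Delta_\eps^\gamma) = (L_U(r)/c)\,\Delta_\eps^{1-\gamma}\). Using \(\deltaEpsP(z_\eps) = 0\), this choice satisfies~\eqref{eq:asr:asr-strength}. Consequently \(\rho_\eps \le \hat\rho_\eps = O(\Delta_\eps^{1-\gamma})\), which tends to \(0\) since \(\gamma < 1\). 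Under~\eqref{eq:asr:ass-lip-strong}, replace \(L_U(r)\) by \(l_U\); boundedness of the witnesses is then never used, which proves the final clause.

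The argument is essentially bookkeeping. The only points needing care are that the fixed-point set of the shifted petal map coincides with \(\Fix(\TEpsP)\) when the petal tag is \((0,\fin)\), and that the lower half of the \(\Theta\) bound is what keeps the denominator away from zero. Only the \(\Omega(\Delta_\eps^\gamma)\) part of the hypothesis is actually needed. Optionally, Proposition~\ref{prop:asr:thickening} can enlarge the singleton \(\{z_\eps\}\) to a ball of radius \(a_\eps \dist_\calH(z_\eps,\VEpsC)\). That would only add an \(O(a_\eps)\) term to the modulus, so it is not needed for the statement as posed.
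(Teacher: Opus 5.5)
Your proof is correct and follows the paper's argument: \(\VEpsC = \Fix(T_\bartheta)\), \(\mu_\eps \equiv 0\) because the region lies in \(\VEpsP\), the defect is bounded via \eqref{eq:asr:ass-lip-param}, and the result follows by dividing by the \(\Omega(\Delta_\eps^\gamma)\) dislocation. The only difference is the choice of region. The paper takes the larger set \(\VEpsP \cap \bbB_\calH\bigl(z_\eps, \alpha\,\dist_\calH(z_\eps,\Fix(T_\bartheta))\bigr)\) with \(\alpha \in (0,1)\), which costs a factor \(1-\alpha\) in the lower distance bound and uses the upper half of the \(\Theta\) hypothesis to keep those states bounded; your singleton is the \(\alpha = 0\) case and needs only the lower half.
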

\begin{proof}
    Write \(h_\eps := \normD{\thetaEpsP - \bartheta} > 0\) and \(\tilde{d}_\eps := \dist_\calH(z_\eps, \Fix(T_\bartheta))\), so that \(C_0 h_\eps^\gamma \ge \tilde{d}_\eps \ge c_0 h_\eps^\gamma > 0\) for some \(C_0 \ge c_0 > 0\). Here \(\VEpsC = \Fix(T_\bartheta)\) because \(\thetaEpsC \equiv \bartheta\), and \(\tagmap(\thetaEpsP) = (0,\fin)\) gives \(\vEpsP = 0\) with \(\VEpsP \ne \emptyset\). Fix \(\alpha \in (0,1)\) and set \(\calS_\eps := \VEpsP \cap \bbB_\calH(z_\eps, \alpha \tilde{d}_\eps)\), which contains \(z_\eps\). As \(\calS_\eps \subseteq \VEpsP\), \(\deltaEpsP \equiv 0\) there and~\eqref{eq:asr:asr-def} holds with \(\mu_\eps \equiv 0\). By \(1\)-Lipschitzness of \(\dist_\calH(\cdot, \VEpsC)\), \(\inf_{z \in \calS_\eps}\dist_\calH(z, \VEpsC) \ge (1-\alpha)\tilde{d}_\eps \ge c \cdot h_\eps^\gamma > 0\) with \(c := (1-\alpha)c_0\); in particular \(\calS_\eps \cap \VEpsC = \emptyset\). Moreover \(\sup_{z \in \calS_\eps}\norm{z}_\calH \le \sup_\eps \bigl(\norm{z_\eps}_\calH + \alpha C_0 h_\eps^\gamma\bigr) =: R < \infty\), where only the witnesses enter, so~\eqref{eq:asr:ass-lip-param} gives \(\sup_{z \in \calS_\eps}\norm{\deltaEpsP(z) - \deltaEpsC(z)}_\calH \le L_U(R) h_\eps\). Take \(\hat{\rho}_\eps := L_U(R)/c \cdot h_\eps^{1-\gamma}\), so \(\rho_\eps \le \hat{\rho}_\eps\), which tends to \(0\) as \(\gamma < 1\). Every \(z \in \calS_\eps\) obeys \(\hat{\rho}_\eps \cdot \dist_\calH(z, \VEpsC) \ge L_U(R) h_\eps \ge \norm{\deltaEpsP(z)}_\calH + \norm{\deltaEpsP(z) - \deltaEpsC(z)}_\calH\), which is~\eqref{eq:asr:asr-strength}. Under~\eqref{eq:asr:ass-lip-strong}, \(l_U\) replaces \(L_U(R)\) throughout and no bound on \(R\) is needed.
\end{proof}

\begin{theorem}[Escape-to-horizon cases]
    \label{thm:asr:escape-to-horizon}
    Under Assumption~\ref{ass:asr:T}, set \(\thetaEpsP \equiv \bartheta\) and suppose \(D_\eps := \inf_{z \in \VEpsC} \norm{z}_\calH \to \infty\) as \(\eps \downarrow 0\). Fix \(\gamma \in (0,1)\) and a nonnegative family \(\epsFamily{\nu_\eps}\) with \(\nu_\eps \to 0\). With \(g_{\bartheta}(D) := \min_{\norm{z}_\calH \le D} \norm{\delta_\bartheta(z) - v_\bartheta}_\calH\), let \(\calS_\eps := \{z \in \bbB_\calH(0, D_\eps^\gamma) \mid \norm{\delta_\bartheta(z) - v_\bartheta}_\calH \le g_{\bartheta}(D_\eps^\gamma) + \nu_\eps\}\). Then \(\epsFamily{\calS_\eps}\) is an ASR family with \(\rho_\eps = O(D_\eps^{-1} + D_\eps^{\gamma-1} \cdot \normD{\thetaEpsC - \bartheta})\).
\end{theorem}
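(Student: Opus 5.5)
The plan is to check Definition~\ref{def:asr:sr-def} and Definition~\ref{def:asr:asr} directly. The argument rests on two facts: every point of \(\calS_\eps\) has norm at most \(D_\eps^\gamma\), which is negligible next to \(D_\eps\); and the Lipschitz modulus \(L_U\) is affine. I write \(h_\eps := \normD{\thetaEpsC - \bartheta}\). Compactness of \(U\) gives \(\sup_\eps h_\eps < \infty\), and in fact \(h_\eps \to 0\).

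\emph{Step 1 (nonemptiness and the ASR condition).} The map \(z \mapsto \norm{\delta_\bartheta(z) - v_\bartheta}_\calH\) is continuous, since \(T_\bartheta\) is averaged and hence Lipschitz. The ball \(\bbB_\calH(0, D_\eps^\gamma)\) is compact, so the minimum defining \(g_\bartheta(D_\eps^\gamma)\) is attained. Any minimizer lies in \(\calS_\eps\), so \(\calS_\eps \ne \emptyset\).

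Next I show that \(g_\bartheta(D) \downarrow 0\) as \(D \to \infty\). Monotonicity is clear, because the ball grows with \(D\). By Proposition~\ref{prop:asr:forward-drift}, \(v_\bartheta\) is the \(\norm{\cdot}_\bartheta\)-projection of \(0\) onto \(\cl{\ran(\delta_\bartheta)}\), so \(v_\bartheta \in \cl{\ran(\delta_\bartheta)}\). Hence \(\inf_{z \in \calH} \norm{\delta_\bartheta(z) - v_\bartheta}_\calH = 0\), and every near-minimizer lies in some finite ball. Since \(\thetaEpsP \equiv \bartheta\), setting \(\mu_\eps := g_\bartheta(D_\eps^\gamma) + \nu_\eps \to 0\) gives~\eqref{eq:asr:asr-def}.

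\emph{Step 2 (distance to \(\VEpsC\)).} By Assumption~\ref{ass:asr:T}(\romannumeral1) and \(\thetaEpsC \in \calD_\KKT\), the set \(\VEpsC\) is nonempty. For \(z \in \calS_\eps\) and any \(w \in \VEpsC\),
\[
\norm{z - w}_\calH \ge \norm{w}_\calH - \norm{z}_\calH \ge D_\eps - D_\eps^\gamma .
\]
Because \(\gamma < 1\) and \(D_\eps \to \infty\), for small \(\eps\) this gives \(\dist_\calH(z, \VEpsC) \ge D_\eps/2 > 0\). In particular \(\calS_\eps \cap \VEpsC = \emptyset\).

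\emph{Step 3 (the residual bound).} For \(z \in \calS_\eps\), I apply~\eqref{eq:asr:ass-lip-param} on the ball of radius \(D_\eps^\gamma\) and write \(L_U(r) = a + b r\):
\[
\norm{\deltaEpsC(z)}_\calH \le \norm{\delta_\bartheta(z)}_\calH + (a + b D_\eps^\gamma)\, h_\eps \le \norm{v_\bartheta}_\calH + \mu_\eps + (a + b D_\eps^\gamma)\, h_\eps .
\]
This is exactly the right-hand side of~\eqref{eq:asr:asr-strength}. Dividing by \(D_\eps/2\) gives
\[
\rho_\eps \le \frac{2\bigl(\norm{v_\bartheta}_\calH + \mu_\eps + a h_\eps\bigr)}{D_\eps} + 2b\, D_\eps^{\gamma-1} h_\eps .
\]
The numerator of the first term is bounded, so that term is \(O(D_\eps^{-1})\), and the second is \(O(D_\eps^{\gamma-1} h_\eps)\). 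Both vanish, since \(D_\eps \to \infty\), \(\gamma < 1\), and \(h_\eps\) is bounded. This yields the stated \(\rho_\eps\), which is finite and tends to \(0\).

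The only mildly delicate point is Step 1's claim \(g_\bartheta(D) \to 0\). It must handle the tag \((\cdot, \inftag)\), where \(v_\bartheta\) is not attained, so it rests solely on \(v_\bartheta \in \cl{\ran(\delta_\bartheta)}\). The last statement of Proposition~\ref{prop:asr:forward-drift} can serve as a cross-check in the case \(\Fix(T_\bartheta) = \emptyset\). Everything else is the triangle inequality.
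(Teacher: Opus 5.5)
Your proof is correct and follows the paper's argument essentially step for step: nonemptiness from an attained minimum on the compact ball, the choice \(\mu_\eps := g_{\bartheta}(D_\eps^\gamma) + \nu_\eps\), the separation \(\dist_\calH(z,\VEpsC) \ge D_\eps - D_\eps^\gamma\), and the affine modulus \(L_U\) in the residual bound. The only difference is that you get \(g_{\bartheta}(D) \to 0\) directly from \(v_\bartheta \in \cl{\ran(\delta_\bartheta)}\), which treats attained and unattained drifts together and avoids the paper's case split on whether \(\Fix(T_\bartheta)\) is empty.
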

\begin{proof}
    Here \(g_{\bartheta}\) is nonincreasing with \(g_{\bartheta}(D) \to 0\) as \(D \to \infty\): when \(\Fix(T_\bartheta) = \emptyset\), Proposition~\ref{prop:asr:forward-drift} supplies \(\{z_j\}\) with \(\norm{z_j}_\calH \to \infty\) and \(\delta_\bartheta(z_j) \to v_\bartheta\); otherwise \(v_\bartheta = 0\) and \(g_{\bartheta}\) vanishes on every ball meeting \(\Fix(T_\bartheta)\). The minimum defining \(g_{\bartheta}(D_\eps^\gamma)\) is attained, \(\bbB_\calH(0, D_\eps^\gamma)\) being compact and \(\delta_\bartheta\) continuous, and any minimizer lies in \(\calS_\eps\), so \(\calS_\eps \ne \emptyset\). Since \(\thetaEpsP \equiv \bartheta\) we have \(\deltaEpsP = \delta_\bartheta\) and \(\vEpsP = v_\bartheta\), so~\eqref{eq:asr:asr-def} holds with \(\mu_\eps := g_{\bartheta}(D_\eps^\gamma) + \nu_\eps \to 0\). Every \(z \in \calS_\eps\) has \(\norm{z}_\calH \le D_\eps^\gamma\) while every \(w \in \VEpsC\) has \(\norm{w}_\calH \ge D_\eps\), so \(\dist_\calH(z, \VEpsC) \ge D_\eps - D_\eps^\gamma > 0\) for small \(\eps\); in particular \(\calS_\eps \cap \VEpsC = \emptyset\). On \(\calS_\eps\), \(\norm{\deltaEpsP(z)}_\calH \le \norm{v_\bartheta}_\calH + \mu_\eps\), while~\eqref{eq:asr:ass-lip-param} gives \(\norm{\deltaEpsP(z) - \deltaEpsC(z)}_\calH \le L_U(D_\eps^\gamma) \cdot \normD{\thetaEpsC - \bartheta}\). Hence~\eqref{eq:asr:asr-strength} holds with
    \begin{align*}
        \hat{\rho}_\eps := \frac{
            L_U(D_\eps^\gamma) \cdot \normD{\thetaEpsC - \bartheta} + \norm{v_\bartheta}_\calH + \mu_\eps
        }{D_\eps - D_\eps^\gamma} = O(D_\eps^{\gamma - 1} \cdot \normD{\thetaEpsC - \bartheta} + D_\eps^{-1}).
    \end{align*}
    The order holds because \(L_U \in \Affp\) makes \(L_U(D_\eps^\gamma)/(D_\eps - D_\eps^\gamma) = O(D_\eps^{\gamma-1})\) while the rest of the numerator stays bounded. As \(\gamma < 1\), \(D_\eps \to \infty\) and \(\normD{\thetaEpsC - \bartheta} \to 0\), we get \(\rho_\eps \le \hat{\rho}_\eps \to 0\).
\end{proof}



\section{Afterimage Example Gallery}
\label{sec:exp}

We exhibit ASR families across all three problem classes --- LP, with \(\calX = \R^n\) and \(\calK = \R_+^n\); SOCP, with \(\calX = \R^n\) and \(\calK = \calQ^n := \{(t, v) : t \ge \normtwo{v}\}\); and SDP, with \(\calX = \sym{n}\) and \(\calK = \psd{n}\), where capital letters denote matrices --- under both ADMM/sGS-ADMM and PDHG. Example~\ref{exp:motivating} treats both maps and verifies Assumption~\ref{ass:asr:T}.


\begin{figure}[htbp]
    \centering

    \begin{minipage}{\textwidth}
        \centering
        \begin{minipage}[b]{0.245\textwidth}
            \centering
            \includegraphics[width=\columnwidth]{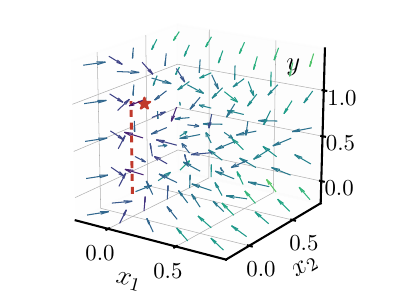}
        \end{minipage}
        \hfill
        \begin{minipage}[b]{0.245\textwidth}
            \centering
            \includegraphics[width=\columnwidth]{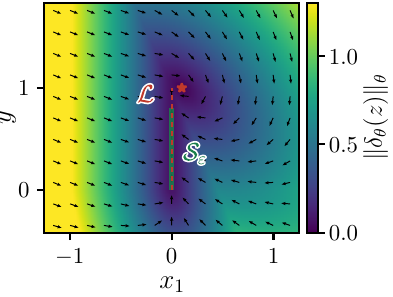}
        \end{minipage}
        \hfill
        \begin{minipage}[b]{0.245\textwidth}
            \centering
            \includegraphics[width=\columnwidth]{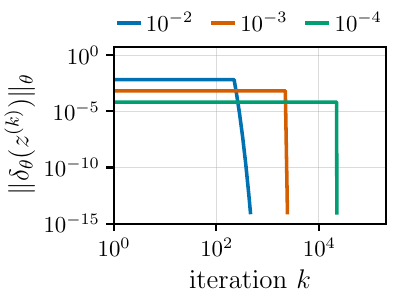}
        \end{minipage}
        \hfill
        \begin{minipage}[b]{0.245\textwidth}
            \centering
            \includegraphics[width=\columnwidth]{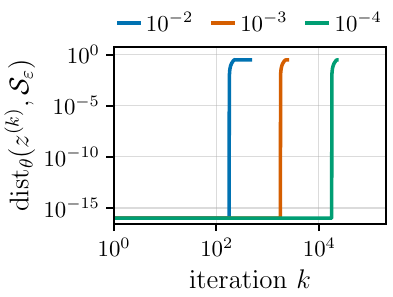}
        \end{minipage}
    \end{minipage}

    \caption{Example~\ref{exp:motivating} under PDHG, \(\eta_x = \eta_y = 0.45\). \emph{Left two:} \(\delta_\theta\) on \(\calH = \R^3\) at \(\eps = 0.1\), and its slice \(\{x_2 = 0\}\) carrying \(\calL = \Fix T_\bartheta\) (dashed), \(\VEpsC\) (star) and \(\calS\) (solid); arrows are in-slice components.
    \emph{Right two:} \(\norm{\delta_\theta(\zk)}_\theta\) and \(\dist_\theta(\zk,\calS)\) against \(k\), from one \(\eps\)-independent seed, at \(\eps = 10^{-2}, 10^{-3}, 10^{-4}\).\label{fig:exp:motivating-pdhg}}
\end{figure}

\begin{example}[Motivating example continued]
    \label{exp:motivating}
    Recall~\eqref{eq:intro:lp-eps}, in which only \(b\) moves: \(\bartheta := (\barcalA, 0, c)\), \(\thetaEpsC := (\barcalA, \eps, c)\), \(\thetaEpsP \equiv \bartheta\) and \(\normD{\thetaEpsC - \bartheta} = \eps\). Both tags equal \((0,\fin)\), and Assumption~\ref{ass:asr:T} holds on a small compact \(U \ni \bartheta\), since Table~\ref{tab:fom}'s side conditions are strict at \(\bartheta\) (Appendix~\ref{app:exp:motivating}). The dual degeneracy turns into a collapse of the fixed point sets: \(\calL := \Fix T_\bartheta\) is \(\{(-(1-t)\sigma,\,-t\sigma)\}_{t \in [0,1]}\) for ADMM and \(\{(0,0,y)\}_{y \in [0,1]}\) for PDHG, while \(\VEpsC\) is the single point \((\eps,-\sigma)\), resp.\ \((\eps,0,1)\), collapsing onto an endpoint \(F\). Only \(b\) moves, so~\eqref{eq:asr:ass-const-field} holds and Theorem~\ref{thm:asr:face-selection} gives, on \(\calS := \{z \in \calL : \dist_\calH(z,F) \ge c_0\}\) for any \(c_0 > 0\) below the length of \(\calL\), \(\mu_\eps \equiv 0\) and \(\rho_\eps {}\le \eps/(\sqrt2c_0)\), resp.\ \(\eta_y\eps/c_0\) --- the scalings displayed in Figures~\ref{fig:intro:motivating} and~\ref{fig:exp:motivating-pdhg}. Trimming the margin to \(c_\star\eps^\alpha\), any \(c_\star > 0\) below the length of \(\calL\) when \(\alpha = 0\), gives \(\rho_\eps = \Theta(\eps^{1-\alpha})\) for \(\alpha < 1\) and \(\Theta(1)\) beyond, an ASR family precisely for \(\alpha \in [0,1)\), certified directly in Appendix~\ref{app:exp:motivating} rather than by Theorem~\ref{thm:asr:face-selection}. Among \(\eps\)-independent singletons, \(\calL \setminus \{F\}\) is exactly the slow set, and fixed-parameter restart and Halpern schemes change only the constants of the \(\Theta(\eps^{-1})\) plateau (Appendix~\ref{app:exp:motivating}).
\end{example}

Each example below uses one of the two maps, takes Assumption~\ref{ass:asr:T} as given, and draws its residual field on a representative 2-D slice, with \(\sigma = \tau = 1\) for ADMM and \(\eta_x = \eta_y = 0.45\) for PDHG. Each contributes a feature the others lack. (\romannumeral1) Example~\ref{exp:sqrt} realizes Theorem~\ref{thm:asr:holder-sensitivity} at \(\gamma = 0.5\), where the residual and \(\rho_\eps\) scale by different powers of the perturbation. (\romannumeral2) Example~\ref{exp:two} attaches two petals to one center, and in the plotted run a single orbit passes slowly through both carriers in succession. (\romannumeral3) Example~\ref{exp:escape} has \(\Fix T_\bartheta = \emptyset\) because the limit is strongly infeasible, and the residual on its first region is independent of \(\eps\). Examples~\ref{exp:two} and~\ref{exp:escape} each carry a region outside the three cases of \S\ref{sec:asr:asr}. Appendix~\ref{app:exp} adds three more: (\romannumeral4) Example~\ref{exp:socp}, where the petal fixed set, the surviving face and the slow region are all positive-dimensional, with \(\VEpsC \subsetneq \VEpsP\) exact at every \(\eps > 0\); (\romannumeral5) Example~\ref{exp:nonatt}, whose drift is zero and unattained; and (\romannumeral6) Example~\ref{exp:adrift}, which loses attainment and zero drift together, since \(\calA(\calK)\) fails to be closed. The last two have \(\VEpsP = \emptyset\), so \(\mu_\eps > 0\) is forced. Two kinds of slowness appear below: on some regions the residual itself tends to zero, while in the escape examples it stays of order one and the slowness is relative to a fixed-point set that runs away.


\begin{figure}[htbp]
    \centering

    \begin{minipage}{\textwidth}
        \centering
        \begin{minipage}[b]{0.245\textwidth}
            \centering
            \includegraphics[width=\columnwidth]{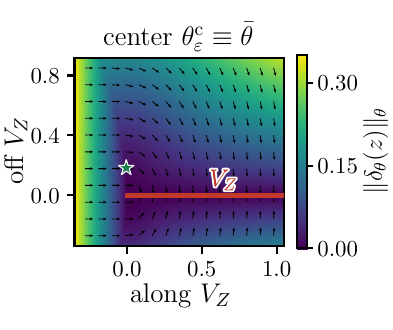}
        \end{minipage}
        \hfill
        \begin{minipage}[b]{0.245\textwidth}
            \centering
            \includegraphics[width=\columnwidth]{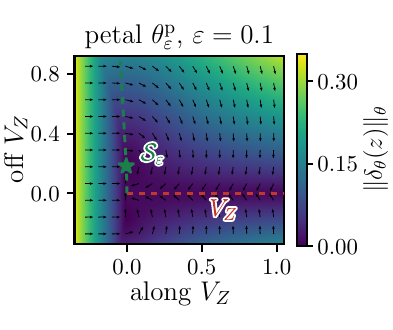}
        \end{minipage}
        \hfill
        \begin{minipage}[b]{0.245\textwidth}
            \centering
            \includegraphics[width=\columnwidth]{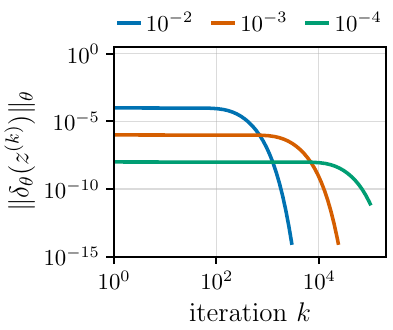}
        \end{minipage}
        \hfill
        \begin{minipage}[b]{0.245\textwidth}
            \centering
            \includegraphics[width=\columnwidth]{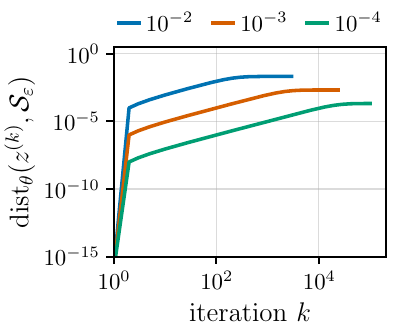}
        \end{minipage}
    \end{minipage}

    \caption{Example~\ref{exp:sqrt} under ADMM; layout as in Figure~\ref{fig:exp:motivating-pdhg}, on the plane through \(\VEpsC\) spanned by the ray and the dislocation; dashed green traces \(\VEpsP = \{Z_\eps\}\) (star) as \(\eps \downarrow 0\). Here the seed \(Z_\eps\) \emph{moves with \(\eps\)}: the residual sits near \(\eps^2\) while \(\dist_\theta(\zk,\calS_\eps)\) grows immediately, \(\calS_\eps\) being one point. \label{fig:exp:sqrt}}
\end{figure}

\begin{example}[SDP square-root H\"older]
    \label{exp:sqrt}
    We now \emph{freeze} the center and move the petal. Let \(\calK = \psd{2}\), \(A_1 := \bigl[\begin{smallmatrix}0&1\\1&-1\end{smallmatrix}\bigr]\), \(\barcalA(X) := \inprod{A_1}{X}_\calX\) and \(E_{ij}\) the matrix units: \(\bartheta = \thetaEpsC :\equiv (\barcalA,\, 0,\, E_{22})\), \(\thetaEpsP := (\barcalA,\, \eps^3,\, E_{22} + \eps^2E_{11})\) and \(h_\eps := \normD{\thetaEpsP - \bartheta} = \sqrt{\eps^6+\eps^4}\), with \(\tagmap(\bartheta) = (0,\fin)\). The center carries the ray \(\VEpsC = \Fix T_\bartheta = \{\diag{t,-1} : t \ge 0\}\) and the petal the single point \(\VEpsP = \{Z_\eps\}\), dislocated by \(\dist_\calH(Z_\eps,\VEpsC) = \sqrt3\,\eps + O(\eps^2) = \Theta(h_\eps^{1/2})\) (Appendix~\ref{app:exp:sqrt}). This is Theorem~\ref{thm:asr:holder-sensitivity} at \(\gamma = 0.5\): taking \(\calS_\eps = \VEpsP\), whose state-free defect has size \(\eps^2\sqrt{1+\eps^2/3}\), gives \(\rho_\eps = \Theta(\eps)\); see Figure~\ref{fig:exp:sqrt}. Here \(\calS_\eps\) is a single \emph{moving} point, so the seed depends on \(\eps\), and there is no invariant carrier to move along.
\end{example}


\begin{figure}[htbp]
    \centering

    \begin{minipage}{\textwidth}
        \centering
        \begin{minipage}[b]{0.245\textwidth}
            \centering
            \includegraphics[width=\columnwidth]{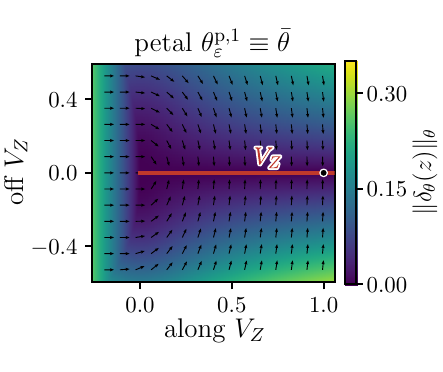}
        \end{minipage}
        \hfill
        \begin{minipage}[b]{0.245\textwidth}
            \centering
            \includegraphics[width=\columnwidth]{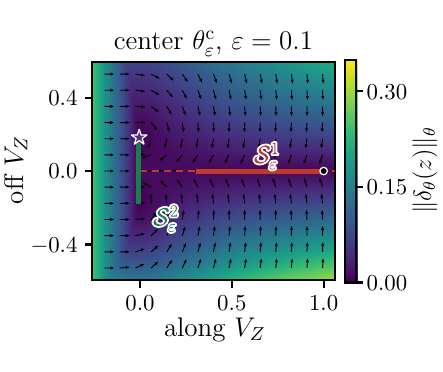}
        \end{minipage}
        \hfill
        \begin{minipage}[b]{0.245\textwidth}
            \centering
            \includegraphics[width=\columnwidth]{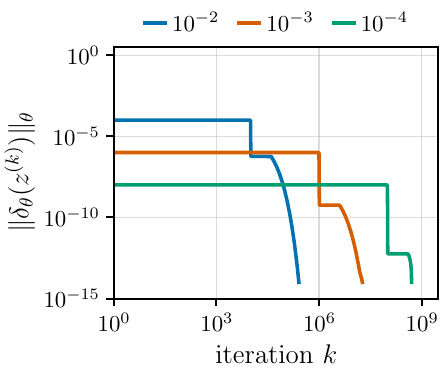}
        \end{minipage}
        \hfill
        \begin{minipage}[b]{0.245\textwidth}
            \centering
            \includegraphics[width=\columnwidth]{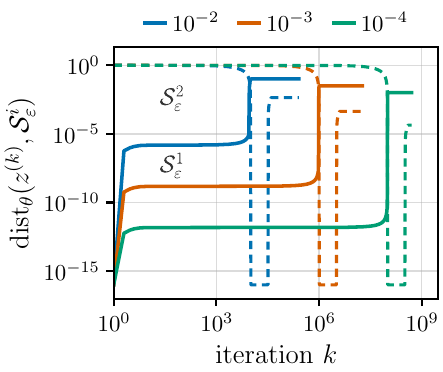}
        \end{minipage}
    \end{minipage}

    \caption{Example~\ref{exp:two} under ADMM; layout as in Figure~\ref{fig:exp:motivating-pdhg}. \emph{Left two:} the ray \(V_Z\), then its collapse to \(Z_\eps\) (star), both petal carriers surviving with trimmed parts solid. \emph{Right two:} one orbit from \(Q_{t_1}\), \(\alpha = 0.5\), \(\eta = 0.25\); solid and dashed are the distances to \(\SEpsOne\), \(\SEpsTwo\). \label{fig:exp:two}}
\end{figure}

\begin{example}[Two afterimages, two exponents]
    \label{exp:two}
    We keep the SDP data of Example~\ref{exp:sqrt} but read it \emph{backwards} --- what was the center there is a petal here --- and add a second petal. Writing \(C_\eps := E_{22} + \eps^2E_{11}\): \(\bartheta := (\barcalA,\, 0,\, E_{22})\), \(\thetaEpsC := (\barcalA,\, \eps^3,\, C_\eps)\), \(\thetaEpsPOne :\equiv \bartheta\) and \(\thetaEpsPTwo := (\barcalA,\, 0,\, C_\eps)\), with gaps \(\Theta(\eps^2)\) and \(\eps^3\) and \(\tagmap(\bartheta) = (0,\fin)\) as before. Three fixed sets are involved (Appendix~\ref{app:exp:two}): the ray \(\VEpsPOne = V_Z := \{Q_t := \diag{t,-1} : t \ge 0\}\); the point \(\VEpsC = \{Z_\eps\}\) of Example~\ref{exp:sqrt}, which collapses onto \(Q_0\) yet stays \(\sqrt3\eps + O(\eps^2)\) \emph{off} the ray; and the segment \(\VEpsPTwo = \{q(y) := -C_\eps + yA_1\}_{y \in [y_-,y_+]}\), with \(y_\pm\) the roots of the dual PSD condition \(y^2 = \eps^2(1+y)\). Both petals differ from the center in \(b\) and/or \(c\) only, so Theorem~\ref{thm:asr:partial-superposition}\,(\romannumeral1) makes both defects state-free; each \(\deltaEpsP\) vanishes on its own carrier, so \(\mu_\eps \equiv 0\).
    Fix \(t_1 \ge 1\), \(\alpha \in [0,1)\) and \(\eta \in (0,1)\): the regions \(\SEpsOne := \{Q_t : \eps^\alpha \le t \le t_1\}\) and \(\SEpsTwo := \{q(y) : y_- \le y \le (1-\eta)y_+\}\) have \(\rho^1_\eps = \Theta(\eps^{2-\alpha})\) and \(\rho^2_\eps = \Theta(\eps^2)\). Region~\(1\) is face selection with a tunable trim, certified directly in Appendix~\ref{app:exp:two}.
    Region~\(2\) falls outside \S\ref{sec:asr:asr}: \(\thetaEpsC\) and \(\thetaEpsPTwo\) both move and neither equals \(\bartheta\), so Definitions~\ref{def:asr:sr-def} and~\ref{def:asr:asr} certify it directly. One center thus carries two afterimages whose \emph{residuals} differ by a full order of \(\eps\); see Figure~\ref{fig:exp:two}.
\end{example}


\begin{figure}[htbp]
    \centering

    \begin{minipage}{\textwidth}
        \centering
        \begin{minipage}[b]{0.245\textwidth}
            \centering
            \includegraphics[width=\columnwidth]{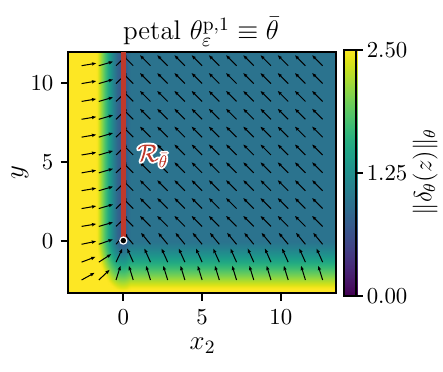}
        \end{minipage}
        \hfill
        \begin{minipage}[b]{0.245\textwidth}
            \centering
            \includegraphics[width=\columnwidth]{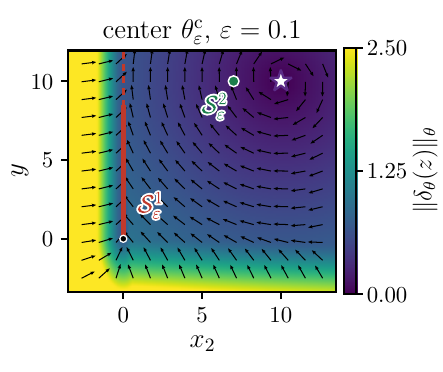}
        \end{minipage}
        \hfill
        \begin{minipage}[b]{0.245\textwidth}
            \centering
            \includegraphics[width=\columnwidth]{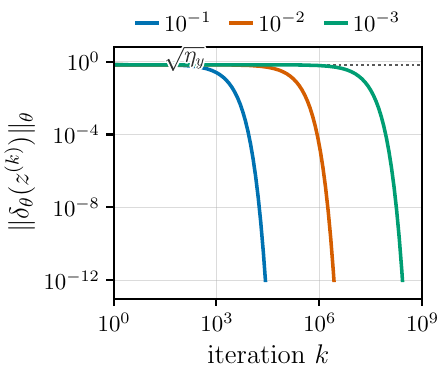}
        \end{minipage}
        \hfill
        \begin{minipage}[b]{0.245\textwidth}
            \centering
            \includegraphics[width=\columnwidth]{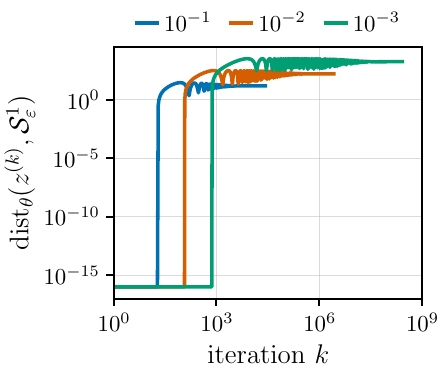}
        \end{minipage}
    \end{minipage}

    \caption{Example~\ref{exp:escape} under PDHG; layout as in Figure~\ref{fig:exp:motivating-pdhg}, on the slice \(\{x_1 = 0\}\). \emph{Left two:} with no fixed point at the limit the color never reaches \(0\), so \(\calR_\bartheta\) is drawn instead; then one appears (star) and escapes, \(\SEpsOne\) solid and \(\SEpsTwo\) the green dot. \emph{Right two:} one orbit from \(z^{(0)} = 0\), \(\gamma = 0.8\); the three plateaus share the same height \(\sqrt{\eta_y}\) and only lengthen. \label{fig:exp:escape}}
\end{figure}

\begin{example}[Escape to the horizon]
    \label{exp:escape}
    Let \(\calK = \R^2_+\), \(c := (0,1)\), \(a := 1/\eps\) and \(0 < \xi_\eps \to 0\): \(\bartheta := ([-1\ \ 0],\, 1,\, c)\), \(\thetaEpsC := ([-1\ \ \eps],\, 1,\, c)\), \(\thetaEpsPOne :\equiv \bartheta\) and \(\thetaEpsPTwo := ([-1\ \ \eps],\, 1 - \xi_\eps,\, c)\), with gaps \(\eps\) and \(\xi_\eps\). The limit is strongly infeasible, so \(\Fix T_\bartheta = \emptyset\), while its drift \(v_\bartheta = (0,0,\eta_y)\) is attained on the ray \(\calR_\bartheta := \{(0,0,y) : y \ge 0\} = \VEpsPOne\): \(\tagmap(\bartheta) = (+,\fin)\). The center is solvable, but its unique fixed point \(\VEpsC = \{(0,a,a)\}\) \emph{escapes}, \(D_\eps := \inf_{z \in \VEpsC}\norm{z}_\calH = \sqrt2/\eps \to \infty\). For any \(\gamma \in (0,1)\), the regions \(\SEpsOne := \calR_\bartheta \cap \bbB_\calH(0,D_\eps^\gamma)\) and \(\SEpsTwo := \VEpsPTwo = \{(0,(1-\xi_\eps)a,\,a)\}\) have \(\mu_\eps \equiv 0\), \(\rho^1_\eps = \Theta(\eps)\) and \(\rho^2_\eps = \eta_y\eps\) exactly (Appendix~\ref{app:exp:escape}). On \(\SEpsOne\) the residual equals \(\norm{v_\bartheta}_\theta = \sqrt{\eta_y}\), independent of \(\eps\); Region~\(2\) again falls outside \S\ref{sec:asr:asr}, since both \(\thetaEpsC\) and \(\thetaEpsPTwo\) move and neither equals \(\bartheta\).
\end{example}


\section{Conclusion}
\label{sec:conclusion}
We introduced a framework that certifies slow regions in first-order methods for conic-LP: a region family is slow once the residual vanishes relative to the distance to the fixed-point set of its own parameter. The afterimage principle explains where such families come from: a vanishing perturbation moves the residual field continuously, while the geometry it carries can change abruptly, so that the geometry of a neighboring problem survives as a slow region of the problem we actually solve. The standing assumptions hold for ADMM, sGS-ADMM, and PDHG. Moreover, the mechanism requires neither a polyhedral cone nor a well-posed limit, so the gallery covers infeasible and non-attaining problems. Our framework also has two limitations. First, it is purely spatial: it measures the strength of the residual field on a region, but says nothing about the dynamics --- whether typical orbits enter a slow region, or for how long they stay. Second, every certificate starts from a petal family designed by hand; we do not yet know how to find one for a given, non-designed instance. Two directions are worth exploring in future work: locating afterimage slow regions in specific conic-LP families, and designing acceleration schemes around them.

\clearpage

\section*{Acknowledgments}
This project is partly funded by the NSF CAREER Award 2543352.


\section*{AI use statement}
\label{sec:llm}
The framework of this paper --- its assumptions, definitions, theorems, and the example gallery with its
derivations --- was developed and written by the authors. Large language models (LLMs) were used to copy-edit and
compress the exposition throughout, and to review the draft adversarially.
The authors
take full responsibility for this paper.

\bibliographystyle{plainnat}
\bibliography{../../references/refs,../../references/myRefs}

\appendix

\section{Appendix}
\label{app}


\subsection{Related Work}
\label{app:related}

Our afterimage slow region framework is related to three lines of work: recent solver developments for FOM on conic-LP, convergence theory of FOM on conic-LP, and dynamical system theory.

\paragraph{Recent solver developments in FOM for conic-LP.} ADMM, sGS-ADMM and PDHG have become popular choices for large-scale conic-LP, and a number of solvers are built upon them~\citep{odonoghue16jota-scs,zheng17ifac-cdcs-sdpsolver,garstka21jota-cosmo}. A related family of methods maintains a second-order outer loop without ever forming the full Newton system. Semismooth Newton (SSN) is applied either to the Augmented Lagrangian Method (ALM) subproblem~\citep{zhao10siopt-newtoncg-alm-sdp,yang15mpc-sdpnalplus-sdpsolver} or directly to the fixed-point residual of~\eqref{eq:intro:fixed-point-update}~\citep{li18sisc-ssn-sdp,deng25arxiv-efficient-ssn-sdp}, with the Newton system solved inexactly by conjugate gradient. Alternatively, a low-rank factorization reduces the size of the subproblem, which is then handled by Riemannian methods on the resulting manifold~\citep{tang24siopt-feasible-lowrank-sdp,hou26or-rnnal} or by a hybrid convex--nonconvex scheme~\citep{monteiro26mp-hallar}. Like FOM, these methods remain sensitive to conditioning. More recently, GPU implementations of FOM for conic-LP have appeared~\citep{lu25or-cupdlp,lin25arxiv-pdcs,han25ijoc-low-rank,groudiev25-cuadmm,aguirre25arxiv-cuhallar,ding25arxiv-new-understanding-sdp-lowrank}.

\paragraph{Recent theoretical developments in FOM for conic-LP.} As general splitting schemes, FOM have well-established worst-case sublinear rates in monotone operator theory~\citep{ryu22book-monotone}, provided the fixed point set is nonempty.
On conic-LP, however, local linear convergence is observed much more frequently than the worst-case rate suggests, even on degenerate instances. For LP this is classical: global sharpness follows from polyhedral error bounds~\citep{hoffman03ws-approximate-solution-linear-inequalities,mangasarian79sjco-nonlinear-perturbation-lp}, and later work has further refined it for PDHG~\citep{applegate21neurips-pdhg,lu24mp-geometry}. For ADMM on LP/QP, \citet{boley13siopt-linearconv-admm-lp-qp} identifies constant-step transients before the local linear phase.
General conic-LP is much harder. Metric subregularity of the KKT operator, which would imply a local linear rate, can fail~\citep{cui16arxiv-superlinear-alm-sdp}. \citet{han18mor-linear} recover it under uniqueness of the KKT point together with a two-sided second-order regularity condition. For a non-unique KKT point set,~\citet{kang25arxiv-admm} prove local linear convergence of ADMM on SDP whenever the limit point satisfies strict complementarity, and further generalizations have followed~\citep{jiang26arxiv-linearconv-pdhg-sdp,li26arxiv-gpu-conic-qp-linearconv-sc,ding26arxiv-linearconv-fom-cp,wang25arxiv-dual-riemannian-admm-lowrank-sdp}. These analyses are anchored at a KKT point of the instance being solved; the afterimage certificate is instead anchored at a neighboring problem. Another line works on the pathological instance itself, using the minimal displacement vector of DRS/ADMM and PDHG as an infeasibility certificate~\citep{banjac21orl-minimal-displacement-vector-drs,liu17arxiv-new-drs-pathological-conic-lp,jiang23arxiv-range-displacement-pdhg,applegate24siopt-infeasibility-detection-pdhg-lp}; the afterimage instead uses a \emph{neighboring} problem's drift to certify slow states of a solvable center.
A separate line of work measures an instance's distance from ill-posedness. For a feasible conic system, the size of the smallest data perturbation that destroys feasibility governs both the geometry of the feasible region and the cost of solving it~\citep{renegar94mp-perturbation-lp,freund99mp-distance-ill-conic-linear-system}; more recent works on restarted PDHG map an LP or conic instance to a condition measure, and then to an iteration count~\citep{xiong24arxiv-levelset,xiong26mp-pdhg-lp-limiting-error-ratios,xiong26mor-accessible-complexity-bounds-rpdhg-lp}. We instead locate the slow states inside \(\calH\), allowing non-polyhedral cones and ill-posed limits. Closest to us, \citet{bellon25mor-parametric} classify how the optimal set of a parametric SDP degenerates; we study how such degeneration affects the iterates.

\paragraph{Dynamical system theory.} Locating slow transients inside a convergent dynamical system is a classical difficulty. Bifurcation theory, especially center manifold theory, isolates slow motion by linearizing at an equilibrium and splitting the spectrum~\citep{guckenheimer13ssba-dynamical-system-bifurcations,kuznetsov98springer-elements-applied-bifurcation}.
For averaged nonsmooth maps, however, a smooth linearization to split is generally unavailable, and there is no distinguished critical direction. We therefore certify a transient region instead of an invariant manifold, and this region need not contain a fixed point.
Many algorithms in numerical analysis can likewise be viewed as dynamical systems. Pseudospectral theory, for instance, explains the transient behavior of non-normal operators and of the eigenvalue algorithms built on them~\citep{trefethen05pup-pseudospectra}. Within optimization,~\citet{poon19nips-trajectory-admm,poon20arxiv-geometry-fom} study the trajectory and geometry of FOM; for LP,~\citet{liu24arxiv-crossover-pdlp} identify the spiral trajectories of PDHG within a fixed basis; and for ADMM on degenerate SDP,~\citet{kang26arxiv-admmsdp-limitdyn} give a second-order spatial analysis of the residual field and of the slow regions surrounding a singular KKT point. These works motivate us to look for slow regions in FOM for conic-LP in general.

\subsection{Additional material for \S\ref{sec:asr:ass}}
\label{app:fom}

In this appendix, we prove Table~\ref{tab:fom} by verifying Assumption~\ref{ass:asr:T} for ADMM, sGS-ADMM, and PDHG.

\paragraph{Why exact ALM is not covered.} 
Exact ALM is the proximal point method applied to the dual~\citep{rockafellar76sicon-monotone}, with state \(y \in \calY\) and \(T_\theta(y) = \Pi_{D_\theta}(y + \sigma b)\), where \(D_\theta := \{y \in \calY \mid c - \calA^*y \in \calK^*\}\) itself carries \(\theta\) and need not vary continuously with it. Take \(\calX = \R^2\), \(\calK = \R^2_+\), \(\calY = \R\), and \(\theta_\eps := (\calA_\eps, 0, (1,0))\) with \(\calA_\eps := [1\ \ \eps]\), \(\eps \ge 0\). Every \(\theta_\eps\) is a solvable LP of optimal value \(0\) with attained dual; the rank condition of Table~\ref{tab:fom} holds uniformly. Since \(c - \calA_\eps^*y = (1-y,\ -\eps y)\), we get \(D_{\theta_\eps} = (-\infty,0]\) for every \(\eps > 0\) while \(D_{\theta_0} = (-\infty,1]\): the dual feasible set collapses discontinuously in the limit. At the single state \(y = 0.5\), \(T_{\theta_\eps}(0.5) = 0\) but \(T_{\theta_0}(0.5) = 0.5\), yet \(\normD{\theta_\eps - \theta_0} = \eps\); Assumption~\ref{ass:asr:T}~(\romannumeral4) would demand \(0.5 \le L_U(0.5)\,\eps\) for all small \(\eps\), which no \(L_U \in \Affp\) satisfies.


\paragraph{ADMM case.}
Fix \(\sigma > 0\) and \(\tau \in (0, 2)\). Suppose \(\calA \calA^* \succeq_\calY \kappa_U \cdot \Id\) with constant \(\kappa_U > 0\). Consider the classical three-step ADMM with multiplier step length \(\tau\): 
\begin{subequations}
    \label{eq:app:fom:admm}
    \begin{align}
        \ykpo & = (\calA \calA^*)^{-1} [\sigma^{-1} b - \calA (\sigma^{-1} \xk + \sk - c)], \label{eq:app:fom:admm-y-update} \\
        \skpo & = \Pi_{\calK^*}(c - \calA^* \ykpo - \sigma^{-1} \xk), \label{eq:app:fom:admm-s-update} \\
        \xkpo & = \xk +\tau \sigma (\calA^* \ykpo + \skpo - c).  \label{eq:app:fom:admm-x-update}
    \end{align}
\end{subequations}
For \(k \ge 0\), define the auxiliary state variable \(\zk := \PAp \xk + \calA^\dagger b - \sigma \PA \sk - \sigma \PAp c\). Then, from~\eqref{eq:app:fom:admm-s-update},
\(
    \skpo = \Pi_{\calK^*}[c - \sigma^{-1} \calA^\dagger b + \PA (\sigma^{-1}\xk + \sk -c) - \sigma^{-1} \xk] = \sigma^{-1} \Pi_{\calK^*}(-\zk). 
\)
Furthermore, from~\eqref{eq:app:fom:admm-x-update}, 
\begin{align*}
    & \xkpo = \xk + \tau [\calA^\dagger b - \PA (\xk + \sigma \sk - \sigma c) + \Pi_{\calK^*}(-\zk) - \sigma c] \\
    = & (1 - \tau) \xk + \tau \Pi_{\calK^*}(-\zk) + \tau (\PAp \xk - \sigma \PA \sk + \calA^\dagger b - \sigma \PAp c) \\
    = & (1 - \tau) \xk + \tau \Pi_{\calK^*}(-\zk) + \tau \zk = (1 - \tau) \xk + \tau \Pi_{\calK}(\zk),
\end{align*}
where we use Moreau identity \(z = \Pi_\calK(z) - \Pi_{\calK^*}(-z)\) in the last equality. Put \(\xkpo\) and \(\skpo\)'s new formula into \(\zkpo = \PAp \xkpo + \calA^\dagger b - \sigma \PA \skpo - \sigma \PAp c\), and with the observation that \(\PAp \zk = \PAp \xk - \sigma \PAp c\) for all \(k \ge 0\), we get 
\begin{align}
    & \zkpo = (1 - \tau) \PAp \xk + \tau \PAp \Pi_\calK(\zk) + \calA^\dagger b - \PA \Pi_{\calK^*}(-\zk) - \sigma \PAp c \nonumber \\
    = & (1-\tau) \PAp( \zk + \sigma c) + \tau \PAp [\zk + \Pi_{\calK^*}(-\zk)] + \calA^\dagger b + \PA [\zk - \Pi_{\calK}(\zk)] - \sigma \PAp c \nonumber \\
    = & \zk - \PA [\Pi_{\calK}(\zk) - \calA^\dagger b] - \tau \PAp [-\Pi_{\calK^*}(-\zk) + \sigma c], \label{eq:app:fom:admm-Ttheta} 
\end{align}
which is exactly \(T_\theta(\zk)\)'s form. To recover the primal--dual variable iterates from \(\zk\)'s: 
\begin{align}
    \label{eq:app:fom:admm-recover}
    \xkpo = (1-\tau) \xk + \tau \Pi_{\calK}(\zk), \quad \skpo = \sigma^{-1} \Pi_{\calK^*}(-\zk),
\end{align}
and \(\ykpo\) from~\eqref{eq:app:fom:admm-y-update} for \(k \ge 0\). 

The admissible multiplier step-length range \(\tau \in (0,2)\) is known for two-block ADMM when one objective block is linear; see~\citep{gabay76cma-dual-algorithm-nonlinear-variation} and, for full-sequence convergence to the set of KKT points,~\citep{chen21mp-equivalence-proximal-alm-admm}. Here we also provide a short proof of its averagedness. Let \(G_\theta := \PA + \tau^{-1} \PAp\). Clearly, 
\begin{align}
    \label{eq:app:fom:admm-Gtheta}
    \min\{1,\tau^{-1}\} \cdot \Id \preceq_\calX G_\theta \preceq_\calX \max\{1,\tau^{-1} \}\cdot \Id.
\end{align}
Abbreviate \(\PA [\Pi_{\calK}(z) - \calA^\dagger b]\) as \(r^\parallel_\theta(z)\) and \(\PAp [-\Pi_{\calK^*}(-z) + \sigma c]\) as \(r^\perp_\theta(z)\), respectively. Then for ADMM, \(T_\theta(z) = z - r^\parallel_\theta(z) - \tau r^\perp_\theta(z)\) from~\eqref{eq:app:fom:admm-Ttheta}. For \(\tau = 1\), it is known from Douglas--Rachford splitting that \(T_\theta\) is firmly nonexpansive (\ie 1/2-averaged) under \(\inprod{\cdot}{\cdot}_\calX\)~\citep{ryu22book-monotone}, and thus \(-\delta_\theta\) is 1-cocoercive: \(\forall z, w \in \calX\),
\begin{align}
    \label{app:fom:admm-drs}
    \inprod{r^\parallel_\theta(z) + r^\perp_\theta(z) - r^\parallel_\theta(w) - r^\perp_\theta(w)}{z - w}_\calX
    \ge \norm{
        r^\parallel_\theta(z) + r^\perp_\theta(z) - r^\parallel_\theta(w) - r^\perp_\theta(w)
    }_\calX^2. 
\end{align}
For \(\tau \ne 1\), fix \(z, w \in \calX\) and abbreviate \(u := r^\parallel_\theta(z) - r^\parallel_\theta(w) \in \ran(\calA^*)\), \(v := r^\perp_\theta(z) - r^\perp_\theta(w) \in \ker(\calA)\), so that \(\delta_\theta(z) - \delta_\theta(w) = -(u + \tau v)\). Since \(G_\theta\) acts as the identity on \(\ran(\calA^*)\) and as \(\tau^{-1}\Id\) on \(\ker(\calA)\), we have the key identity \(G_\theta (u + \tau v) = u + v\). Therefore, 
\begin{align*}
    & -\inprod{\delta_\theta(z) - \delta_\theta(w)}{z - w}_\theta = \inprod{G_\theta(u + \tau v)}{z - w}_\calX = \inprod{u + v}{z-w}_\calX \\
    \ge & \norm{u+v}_\calX^2 = \norm{G_\theta (\delta_\theta(z) - \delta_\theta (w))}_\calX^2 \\
    \ge & \min\{1,\tau^{-1} \} \inprod{G_\theta (\delta_\theta(z) - \delta_\theta (w))}{\delta_\theta(z) - \delta_\theta (w)}_\calX = \min\{1,\tau^{-1} \} \norm{\delta_\theta(z) - \delta_\theta (w)}_\theta^2,
\end{align*}
where the first inequality is~\eqref{app:fom:admm-drs} and the second holds because \(G_\theta^2 \succeq_\calX \min\{1,\tau^{-1}\} \cdot G_\theta\), which follows by comparing \(\norm{G_\theta w}_\calX^2 = \norm{\PA w}_\calX^2 + \tau^{-2}\norm{\PAp w}_\calX^2\) with \(\inprod{G_\theta w}{w}_\calX = \norm{\PA w}_\calX^2 + \tau^{-1}\norm{\PAp w}_\calX^2\). Thus, \(-\delta_\theta\) is \(\min\{1,\tau^{-1} \}\)-cocoercive in \(\inprod{\cdot}{\cdot}_\theta\). Equivalently, \(T_\theta\) is \(\alpha_\theta\)-averaged with \(\alpha_\theta = 0.5 \max\{1,\tau\}\); note that \(\alpha_\theta \in (0,1)\) precisely because \(\tau \in (0,2)\), which is exactly the admissible multiplier range. 

We now verify Assumption~\ref{ass:asr:T} (\romannumeral4). Since \(U\) is compact, \(M_U^\calA := \sup_{\theta \in U} \normop{\calA}\), \(M_U^b := \sup_{\theta \in U} \norm{b}_\calY\) and \(M_U^c := \sup_{\theta \in U} \norm{c}_\calX\) are finite. Write \(\theta = (\calA, b, c)\), \(\theta' = (\calA', b', c')\) in \(U\), and note \(\normop{\calA - \calA'} \le \normHS{\calA - \calA'} \le \normD{\theta - \theta'}\).

The condition \(\calA \calA^* \succeq_\calY \kappa_U \cdot \Id\) makes \(\calA\calA^*\) uniformly invertible on \(U\), so the resolvent identity \(X^{-1} - Y^{-1} = X^{-1}(Y-X)Y^{-1}\) gives \(\normop{(\calA\calA^*)^{-1} - ({\calA'}{\calA'}^*)^{-1}} \le \kappa_U^{-2} \normop{\calA\calA^* - \calA'{\calA'}^*} \le 2 M_U^\calA \kappa_U^{-2} \normop{\calA - \calA'}\). Since \(\calA^\dagger = \calA^*(\calA\calA^*)^{-1}\) and \(\PA = \calA^\dagger \calA\) are products of maps that are bounded and Lipschitz in \(\calA\) over \(U\), there is a finite \(c_U\), depending only on \(\kappa_U\) and \(M_U^\calA\), with
\begin{align}
    \label{eq:app:fom:admm-proj-lip}
    \normop{\calA^\dagger - {\calA'}^\dagger} \le c_U \normop{\calA - \calA'}, \qquad
    \normop{\PA - P_{\calA'}} = \normop{\PAp - P_{\calA'}^\perp} \le c_U \normop{\calA - \calA'};
\end{align}
moreover \(\normop{\calA^\dagger} \le \kappa_U^{-1/2}\) and \(\normop{\PA}, \normop{\PAp} \le 1\) on \(U\).

Expanding~\eqref{eq:app:fom:admm-Ttheta} as \(T_\theta(z) = z - \PA \Pi_\calK(z) + \PA \calA^\dagger b + \tau \PAp \Pi_{\calK^*}(-z) - \tau\sigma \PAp c\), the identity terms cancel in the difference:
\begin{align*}
    T_\theta(z) - T_{\theta'}(z) = & -(\PA - P_{\calA'}) \Pi_\calK(z) + \tau (\PAp - P_{\calA'}^\perp) \Pi_{\calK^*}(-z) \\
    & + (\PA \calA^\dagger b - P_{\calA'} {\calA'}^\dagger b') - \tau\sigma(\PAp c - P_{\calA'}^\perp c').
\end{align*}
Because \(\calK, \calK^*\) are closed convex cones, \(\Pi_\calK(0) = \Pi_{\calK^*}(0) = 0\), so \(1\)-Lipschitzness gives \(\norm{\Pi_\calK(z)}_\calX \le \norm{z}_\calX\) and \(\norm{\Pi_{\calK^*}(-z)}_\calX \le \norm{z}_\calX\). With~\eqref{eq:app:fom:admm-proj-lip}, the first line is thus at most \((1+\tau) c_U \norm{z}_\calX \normD{\theta - \theta'}\). The second line does not involve \(z\); splitting each product as \(\normop{\PA - P_{\calA'}} \norm{\calA^\dagger b}_\calX + \normop{P_{\calA'}} \norm{\calA^\dagger b - {\calA'}^\dagger b'}_\calX\) and using \(\norm{\calA^\dagger b}_\calX \le \kappa_U^{-1/2} M_U^b\), it is at most \(\kappa_U' \normD{\theta - \theta'}\) for a finite \(\kappa_U'\) depending only on \(\kappa_U, M_U^\calA, M_U^b, M_U^c, \sigma, \tau\). Hence~\eqref{eq:asr:ass-lip} holds with a modulus of the form
\begin{align}
    \label{eq:app:fom:admm-LU}
    L_U(r) = \kappa_U' + (1+\tau) c_U \cdot r,
\end{align}
which lies in \(\Affp\).

The linear growth in \(\norm{z}_\calX\) cannot be removed. Suppose \(\calK\) spans \(\calX\), as it does for \(\R^n_+\), \(\calQ^n\) and \(\psd{n}\), and take \(b = b'\), \(c = c'\) and \(z = td\) with \(d \in \calK\), \(t > 0\). Then \(\Pi_\calK(z) = td\), \(\Pi_{\calK^*}(-z) = 0\), so
\begin{align}
    \label{eq:app:fom:admm-growth}
    T_\theta(z) - T_{\theta'}(z) = -t(\PA - P_{\calA'})d + w, \qquad
    w := \PA\calA^\dagger b - P_{\calA'}{\calA'}^\dagger b + \tau\sigma(\PA - P_{\calA'})c ,
\end{align}
where \(w\) does \emph{not} vanish merely because \(b = b'\) and \(c = c'\), since \(\calA^\dagger \ne {\calA'}^\dagger\) in general; it is, however, independent of \(t\). If \(\PA \ne P_{\calA'}\) then, \(\calK\) spanning \(\calX\), some \(d \in \calK\) has \((\PA - P_{\calA'})d \ne 0\), and \(\norm{T_\theta(z) - T_{\theta'}(z)}_\calX \ge t\norm{(\PA - P_{\calA'})d}_\calX - \norm{w}_\calX \to \infty\). A uniform Lipschitz constant on \(U \times \calX\) is therefore impossible.

\paragraph{sGS-ADMM case.} Fix \(\sigma > 0\) and \(\tau \in (0, 2)\). Suppose \(\calA \calA^* \succeq_\calY \kappa_U \cdot \Id\) with constant \(\kappa_U > 0\). Consider sGS-ADMM for conic-LP~\citep{chen17mp-sgsadmm}: 
\begin{subequations}
    \label{eq:app:fom:sgs}
    \begin{align}
        \ykhalf & = (\calA \calA^*)^{-1} [\sigma^{-1} b - \calA (\sigma^{-1} \xk + \sk - c)], \label{eq:app:fom:sgs-yhalf-update} \\
        \skpo & = \Pi_{\calK^*}(c - \calA^* \ykhalf - \sigma^{-1} \xk), \label{eq:app:fom:sgs-s-update} \\
        \ykpo & = (\calA \calA^*)^{-1} [\sigma^{-1} b - \calA (\sigma^{-1} \xk + \skpo - c)], \label{eq:app:fom:sgs-y-update} \\
        \xkpo & = \xk +\tau \sigma (\calA^* \ykpo + \skpo - c).  \label{eq:app:fom:sgs-x-update}
    \end{align}    
\end{subequations}
Compared to ADMM, sGS-ADMM adds one additional \(y\)-update in~\eqref{eq:app:fom:sgs-y-update}.
Similar to the ADMM case, define the auxiliary state variable \(\zk := \PAp \xk + \calA^\dagger b - \sigma \PA \sk - \sigma \PAp c\). From~\eqref{eq:app:fom:sgs-yhalf-update} and~\eqref{eq:app:fom:sgs-s-update}, \(\skpo = \sigma^{-1} \Pi_{\calK^*}(-\zk)\). From~\eqref{eq:app:fom:sgs-x-update}, we get \(\xkpo = (\Id - \tau \PA) \xk + \tau \sigma \PAp \skpo + \tau \calA^\dagger b - \tau \sigma \PAp c\). With the observation that \(\PAp \xk = \PAp \zk + \sigma \PAp c\), we calculate \(\zkpo\):
\begin{align}
    & \zkpo = \PAp \xkpo + \calA^\dagger b - \sigma \PA \skpo - \sigma \PAp c \nonumber \\
    = & \PAp \xk + \sigma (\tau \PAp - \PA) \skpo + \calA^\dagger b - (1+\tau) \sigma \PAp c \nonumber \\
    = & \PAp \zk + (\tau \PAp - \PA) \Pi_{\calK^*}(-\zk) + \calA^\dagger b - \sigma \tau \PAp c \nonumber \\
    = & \zk - \PA [\Pi_\calK(\zk) - \Pi_{\calK^*}(-\zk)] - \PA \Pi_{\calK^*}(-\zk) + \calA^\dagger b + \tau \PA^\perp \Pi_{\calK^*}(-\zk) - \sigma \tau \PAp c \nonumber \\
    = & \zk - \PA [\Pi_{\calK}(\zk) - \calA^\dagger b] - \tau \PAp [-\Pi_{\calK^*}(-\zk) + \sigma c] \label{eq:app:fom:sgs-Ttheta}.
\end{align}
The form of~\eqref{eq:app:fom:sgs-Ttheta} is exactly the same as that of~\eqref{eq:app:fom:admm-Ttheta}. Therefore, in its \(z\)-variable formula, sGS-ADMM inherits all dynamical properties from ADMM, including \(T_\theta, m_U, M_U, \alpha_\theta, G_\theta\), and in particular Assumption~\ref{ass:asr:T} (\romannumeral4) with the same modulus~\eqref{eq:app:fom:admm-LU}. 

What differentiates sGS-ADMM from ADMM is its variable recovery strategy from \(z\) to \((x,y,s)\). For sGS-ADMM:
\begin{align}
    \label{eq:app:fom:sgs-recover}
    \xkpo = (\Id - \tau \PA) \xk + \tau \PAp (\Pi_{\calK^*}(-\zk) - \sigma c)  + \tau \calA^\dagger b, \ \skpo = \sigma^{-1} \Pi_{\calK^*}(-\zk), 
\end{align}
and \(\ykpo\) can be recovered from~\eqref{eq:app:fom:sgs-yhalf-update}--\eqref{eq:app:fom:sgs-y-update}. Except for \(\skpo\)'s recovery, these rules are different from ADMM's in~\eqref{eq:app:fom:admm-recover}. Consequently, the observable KKT residuals are also different. 

\paragraph{PDHG case.} Fix \(\eta_x, \eta_y > 0\). We further assume \(\gamma_U := \sup_{\theta \in U} \sqrt{\eta_x \eta_y} \normop{\calA} < 1\). Let \(\calH := \calX \times \calY\) and the auxiliary state variable \(z := (x,y) \in \calH\). PDHG's update for conic-LP is~\citep{chambolle11jmiv-pdhg}:
\begin{subequations}
    \label{eq:app:fom:pdhg}
    \begin{align}
        \xkpo & = x_\theta^+(\zk) := \Pi_\calK[\xk - \eta_x (c - \calA^* \yk)], \label{eq:app:fom:pdhg-x-update} \\
        \ykpo & = y_\theta^+(\zk) := \yk + \eta_y [b - \calA (2 \xkpo - \xk)].  \label{eq:app:fom:pdhg-y-update}
    \end{align}
\end{subequations}
Let \(G_\theta := \begin{bmatrix}
    \eta_x^{-1} \cdot \Id_\calX & \calA^* \\ \calA & \eta_y^{-1} \cdot \Id_\calY
\end{bmatrix}\). Since \(\gamma_U < 1\), \(G_\theta \succ_\calH 0\). Denote \(T_\theta(z) := (x_\theta^+(z), y_\theta^+(z))\). From~\citep{chan14arxiv-inertial-primal-dual-alg}, \(T_\theta\) is firmly nonexpansive (\ie 1/2-averaged) under \(G_\theta\)-metric.

For Assumption~\ref{ass:asr:T} (\romannumeral4), again put \(M_U^\calA := \sup_{\theta \in U} \normop{\calA}\), \(M_U^c := \sup_{\theta \in U} \norm{c}_\calX\), both finite by compactness of \(U\), and recall \(\normop{\calA - \calA'} \le \normD{\theta - \theta'}\). Fix \(z = (x,y) \in \calH\) and \(\theta, \theta' \in U\). Since \(\Pi_\calK\) is \(1\)-Lipschitz, the \(x\)-block obeys
\begin{align}
    \label{eq:app:fom:pdhg-x-param}
    \norm{x_\theta^+(z) - x_{\theta'}^+(z)}_\calX
    \le \eta_x \norm{c - c'}_\calX + \eta_x \normop{\calA - \calA'} \norm{y}_\calY
    \le \eta_x (1 + \norm{z}_\calH) \normD{\theta - \theta'}.
\end{align}
Moreover \(\Pi_\calK(0) = 0\) gives the uniform bound \(\norm{x_{\theta'}^+(z)}_\calX \le \norm{x}_\calX + \eta_x(\norm{c'}_\calX + \normop{\calA'}\norm{y}_\calY) \le (1 + \eta_x M_U^\calA)\norm{z}_\calH + \eta_x M_U^c\), hence \(\norm{2 x_{\theta'}^+(z) - x}_\calX \le (3 + 2\eta_x M_U^\calA) \norm{z}_\calH + 2\eta_x M_U^c\). For the \(y\)-block, inserting \(\calA(2x_{\theta'}^+(z) - x)\) and using~\eqref{eq:app:fom:pdhg-x-param},
\begin{align*}
    & \norm{y_\theta^+(z) - y_{\theta'}^+(z)}_\calY
    \le \eta_y \norm{b - b'}_\calY + 2 \eta_y \normop{\calA} \norm{x_\theta^+(z) - x_{\theta'}^+(z)}_\calX \\
    & \qquad + \eta_y \normop{\calA - \calA'} \norm{2 x_{\theta'}^+(z) - x}_\calX \\
    \le \, & \eta_y \bigl[ 1 + 2 \eta_x M_U^\calA (1 + \norm{z}_\calH) + (3 + 2\eta_x M_U^\calA)\norm{z}_\calH + 2 \eta_x M_U^c \bigr] \cdot \normD{\theta - \theta'}.
\end{align*}
Adding the two blocks yields~\eqref{eq:asr:ass-lip} with the concrete modulus
\begin{align}
    \label{eq:app:fom:pdhg-LU}
    L_U(r) = \bigl[\eta_x + \eta_y(1 + 2\eta_x M_U^\calA + 2 \eta_x M_U^c)\bigr] + \bigl[\eta_x + \eta_y(3 + 4 \eta_x M_U^\calA)\bigr] \cdot r,
\end{align}
again an element of \(\Affp\). As with ADMM, the factor \(\norm{z}_\calH\) is unavoidable, though here the argument needs one more step: the \emph{inputs} to \(\Pi_\calK\) separate by \(\eta_x\norm{(\calA - \calA')^*y}_\calX \to \infty\), but \(\Pi_\calK\) is \(1\)-Lipschitz and could in principle absorb that growth. We now show that it does not. Take \(\calK = \R^n_+\), \(b = b'\), \(c = c'\), \(x = 0\) and \(z = (0, ty)\); then \(x^+ - {x'}^+ = \Pi_{\R^n_+}\bigl[t\eta_x\calA^*y - \eta_x c\bigr] - \Pi_{\R^n_+}\bigl[t\eta_x{\calA'}^*y - \eta_x c\bigr]\), which for large \(t\) behaves like \(t\eta_x\bigl[(\calA^*y)_+ - ({\calA'}^*y)_+\bigr]\). Were \((\calA^*y)_+ = ({\calA'}^*y)_+\) for every \(y \in \calY\), then applying this to both \(y\) and \(-y\) coordinatewise would force \(\calA^*y = {\calA'}^*y\) for all \(y\), \ie \(\calA = \calA'\). Hence whenever \(\calA \ne \calA'\) some \(y\) makes the \(x\)-block grow linearly in \(t\).

It remains to determine \(m_U\) and \(M_U\). Take any \(z = (u,w) \in \calX \times \calY\), \(\inprod{G_\theta z}{z}_\calH = \eta_x^{-1} \norm{u}_\calX^2 + \eta_y^{-1} \norm{w}_\calY^2 + 2\inprod{\calA u}{w}_\calY\). By Cauchy--Schwarz and Young's inequality,
\begin{align*}
    2 \abs{\inprod{\calA u}{w}_\calY} \le \sqrt{\eta_x \eta_y} \normop{\calA} (\eta_x^{-1} \norm{u}_\calX^2 + \eta_y^{-1} \norm{w}_\calY^2) \le \gamma_U (\eta_x^{-1}\norm{u}_\calX^2 + \eta_y^{-1} \norm{w}_\calY^2). 
\end{align*}
Since \(\min\{\eta_x^{-1}, \eta_y^{-1}\} \norm{z}_\calH^2 \le \eta_x^{-1} \norm{u}_\calX^2 + \eta_y^{-1} \norm{w}_\calY^2 \le \max\{\eta_x^{-1}, \eta_y^{-1}\} \norm{z}_\calH^2\), we can take \(m_U := (1 - \gamma_U) / \max\{\eta_x, \eta_y\}\) and \(M_U := (1 + \gamma_U) / \min\{\eta_x, \eta_y\}\).

\begin{proof}[Proof of Theorem~\ref{thm:asr:partial-superposition}]
    Write \(\theta := (\calA, b, c)\), \(\theta' := (\calA, b', c')\). For ADMM/sGS-ADMM, Table~\ref{tab:fom} gives \(\delta_\theta z - \delta_{\theta'} z = \calA^\dagger (b - b') - \tau \sigma \PAp ( c- c')\), which is~\eqref{eq:asr:ass-const-field}. For PDHG, let \(\Delta x^+ := \Pi_\calK(x + \eta_x \calA^* y - \eta_x c) - \Pi_\calK(x + \eta_x \calA^* y - \eta_x c')\), so that \(\delta_\theta z - \delta_{\theta'} z = (\Delta x^+,\ \eta_y (b - b') - 2 \eta_y \calA \Delta x^+)\). Since \(\Pi_\calK\) is 1-Lipschitz, \(\norm{\Delta x^+}_\calX \le \eta_x \norm{c - c'}_\calX\), giving~\eqref{eq:asr:ass-lip-strong}; if \(c = c'\) then \(\Delta x^+ = 0\) and \(\delta_\theta z - \delta_{\theta'} z = (0, \eta_y (b - b'))\), giving~\eqref{eq:asr:ass-const-field}.
\end{proof}

\paragraph{Fixed points correspond to KKT points.} It remains to verify Assumption~\ref{ass:asr:T}~(\romannumeral1).
The equivalence \(\Fix(T_\theta) \ne \emptyset\) iff \(\KKT(\theta) \ne \emptyset\) already follows from the classical fixed-point characterizations, of Douglas--Rachford splitting~\citep[Prop.~26.1]{bauschke17springer-convex-analysis-hilbert-spaces} and of the PDHG saddle point~\citep{chambolle11jmiv-pdhg}. Beyond that, we need the explicit correspondence in the \(z\)-coordinates of Table~\ref{tab:fom}.

\begin{lemma}[Fixed points correspond to KKT points]
    \label{lem:app:fom:fix-kkt}
    Let \(\theta = (\calA,b,c) \in U\). For ADMM and sGS-ADMM under the conditions of Table~\ref{tab:fom}, the map \(\Phi: (x,y,s) \mapsto x - \sigma s\) is a bijection from \(\KKT(\theta)\) onto \(\Fix(T_\theta)\) (the rank condition of Table~\ref{tab:fom} makes \(y\) unique given \((x,s)\)); for PDHG, \((x,y) \in \Fix(T_\theta)\) iff \((x,y,c - \calA^*y) \in \KKT(\theta)\). In both cases \(\Fix(T_\theta) \ne \emptyset\) iff \(\KKT(\theta) \ne \emptyset\).
\end{lemma}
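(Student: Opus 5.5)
For ADMM/sGS-ADMM I will work directly with the explicit map \(T_\theta z = z - \PA[\Pi_\calK(z) - \calA^\dagger b] - \tau\PAp[-\Pi_{\calK^*}(-z) + \sigma c]\) from Table~\ref{tab:fom}. Since \(\PA\) and \(\PAp\) project onto complementary orthogonal subspaces and \(\tau > 0\), the condition \(z \in \Fix(T_\theta)\) splits into two equations:
\begin{align*}
\PA\Pi_\calK(z) = \calA^\dagger b, \qquad \PAp[\sigma c - \Pi_{\calK^*}(-z)] = 0.
\end{align*}
Write \(x := \Pi_\calK(z)\) and \(s := \sigma^{-1}\Pi_{\calK^*}(-z)\). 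Then the Moreau decomposition \(z = x - \sigma s\) gives \(x \in \calK\), \(s \in \calK^*\) and \(\inprod{x}{s}_\calX = 0\).

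The first equation is equivalent to \(\calA x = b\), because applying \(\calA\) to \(\calA^\dagger\calA x = \calA^\dagger b\) gives \(\calA x = b\), and the converse is immediate. The second equation says \(c - s \in \ran(\calA^*)\). Hence there is \(y\) with \(\calA^* y + s = c\), and \(y = (\calA\calA^*)^{-1}\calA(c - s)\) is unique because \(\calA\calA^*\) is invertible. So \((x,y,s) \in \KKT(\theta)\), and \(\Phi(x,y,s) = z\).

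Conversely, take \((x,y,s) \in \KKT(\theta)\) and set \(z := x - \sigma s\). The conditions \(x \in \calK\), \(\sigma s \in \calK^*\) and \(x \perp s\) are exactly the characterization of the Moreau decomposition, so \(\Pi_\calK(z) = x\) and \(\Pi_{\calK^*}(-z) = \sigma s\). Substituting these into the two equations shows \(z \in \Fix(T_\theta)\).

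Injectivity of \(\Phi\) follows because \((x,s)\) is recovered from \(z\) through the two projections, and \(y\) is then determined by the rank condition. The forward and backward maps are therefore mutually inverse, so \(\Phi\) is a bijection. sGS-ADMM has the same \(z\)-map, so it is covered as well.

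For PDHG, \((x,y) \in \Fix(T_\theta)\) means two things. First, \(x = \Pi_\calK[x - \eta_x(c - \calA^*y)]\). Second, \(y = y + \eta_y[b - \calA(2x - x)]\), which is \(\calA x = b\). Set \(s := c - \calA^* y\). The first condition reads \(x = \Pi_\calK(x - \eta_x s)\). By the variational characterization of projection onto a cone, this holds iff \(x \in \calK\), \(-\eta_x s \in \calK^\circ = -\calK^*\) and \(\inprod{x}{\eta_x s}_\calX = 0\); equivalently, \(x \in \calK\), \(s \in \calK^*\) and \(\inprod{x}{s}_\calX = 0\). Combined with \(\calA x = b\) and \(\calA^*y + s = c\), this is exactly \((x,y,s) \in \KKT(\theta)\), and every step reverses.

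The statement \(\Fix(T_\theta) \ne \emptyset \iff \KKT(\theta) \ne \emptyset\) follows in both cases. For PDHG, any KKT triple projects to \((x,y)\), which is a fixed point by the equivalence above.

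The only delicate step is the ADMM converse. It requires that the Moreau decomposition of \(z\) be exactly the pair \((x,\sigma s)\) given by the KKT point. This relies on the complementarity condition in \(\KKT(\theta)\), and I will state that uniqueness fact explicitly.
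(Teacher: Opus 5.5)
Your proposal is correct and follows essentially the same route as the paper. For ADMM/sGS-ADMM you read \(z\) through the Moreau decomposition \(z = \Pi_\calK(z) - \Pi_{\calK^*}(-z)\) and split \(\delta_\theta(z) = 0\) into its orthogonal \(\ran(\calA^*)\) and \(\ker(\calA)\) components; for PDHG you use the normal-cone characterization of \(x = \Pi_\calK(x - \eta_x s)\). The only difference is that you spell out the converse and the injectivity of \(\Phi\) explicitly, which the paper packages into the remark that Moreau's decomposition is a bijection between states \(z\) and complementary cone pairs \((X,S)\).
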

\begin{proof}
    \emph{ADMM and sGS-ADMM.} By Table~\ref{tab:fom} the two share the \(z\)-map, so one computation suffices. Put \(X := \Pi_\calK(z)\) and \(\sigma S := \Pi_{\calK^*}(-z)\); Moreau's decomposition makes \(z \mapsto (X,S)\) a bijection onto the pairs with \(X \in \calK\), \(S \in \calK^*\), \(\inprod{X}{S}_\calX = 0\), with inverse \(z = X - \sigma S\). In these terms Table~\ref{tab:fom} reads \(\delta_\theta(z) = -\PA[X - \calA^\dagger b] - \tau\sigma\PAp[c - S]\), whose two terms lie in the orthogonal subspaces \(\ran(\calA^*)\) and \(\ran(\calA^*)^\perp\); as \(\tau\sigma \ne 0\), \(\delta_\theta(z) = 0\) iff both vanish. The first says \(\calA X = b\), since \(\calA\calA^\dagger = \Id_\calY\) makes it \(\calA^\dagger(\calA X - b) = 0\) with \(\calA^\dagger\) injective; the second says \(c - S \in \ran(\calA^*)\), \ie \(\calA^*y + S = c\) for some \(y\). These are exactly~\eqref{eq:asr:kkt}, and \(y\) is unique because \(\ker(\calA^*) = \{0\}\), which gives the stated bijection.

    \emph{PDHG.} Here \(z = (x,y)\) and \(T_\theta z = (x^+, y + \eta_y[b - \calA(2x^+ - x)])\). If \(z\) is fixed then \(x^+ = x\), and the \(y\)-block gives \(\calA x = b\). With \(s := c - \calA^*y\), the \(x\)-block reads \(x = \Pi_\calK[x - \eta_x s]\), which for a closed convex cone \(\calK\) and \(\eta_x > 0\) holds iff \(-\eta_x s \in N_\calK(x)\), \ie iff \(x \in \calK\), \(s \in \calK^*\) and \(\inprod{x}{s}_\calX = 0\). Thus \((x,y,s) \in \KKT(\theta)\). The converse follows by reversing each step.
\end{proof}

\paragraph{From the \(z\)-residual to KKT residuals.}
The theory measures \(\norm{\delta_\theta(z)}_\calH\), while solvers monitor the KKT residuals of \((x,y,s)\). The next lemma connects the two for the maps of Table~\ref{tab:fom}. 

\begin{lemma}[KKT-residual bridge]
    \label{lem:app:fom:kkt-bridge}
    Let \(\theta = (\calA,b,c) \in U\). For ADMM/sGS-ADMM under the conditions of Table~\ref{tab:fom}, take any \(z \in \calX\), put \(X := \Pi_\calK(z)\) and \(\sigma S := \Pi_{\calK^*}(-z)\), so that \(X \in \calK\), \(S \in \calK^*\), \(\inprod{X}{S}_\calX = 0\), and define the projected KKT residual \(r_\theta(z) := \norm{\calA X - b}_\calY + \min_{y \in \calY} \norm{\calA^* y + S - c}_\calX\). Then
    \begin{align}
        \label{eq:app:fom:kkt-bridge}
        \tfrac{1}{2} \min\{1/M_U^\calA,\ \tau\sigma\} \cdot r_\theta(z) \ \le\ \norm{\delta_\theta(z)}_\calX \ \le\ \max\{\kappa_U^{-1/2},\ \tau\sigma\} \cdot r_\theta(z).
    \end{align}
    For PDHG, write \(z = (x,y)\) and \(\delta_\theta(z) = (\delta_x, \delta_y)\), and put \(x^+ := x + \delta_x\) and \(\tilde{s} := c - \calA^* y + \eta_x^{-1} \delta_x\), so that \(x^+ \in \calK\), \(\tilde{s} \in \calK^*\), \(\inprod{x^+}{\tilde{s}}_\calX = 0\). Then, with \(r_\theta(z) := \norm{\calA x^+ - b}_\calY + \norm{\calA^* y + \tilde{s} - c}_\calX\),
    \begin{align}
        \label{eq:app:fom:kkt-bridge-pdhg}
        (\eta_x^{-1} + \eta_y^{-1} + M_U^\calA)^{-1} \cdot r_\theta(z) \ \le\ \norm{\delta_\theta(z)}_\calH \ \le\ (\eta_x + \eta_y + \eta_x \eta_y M_U^\calA) \cdot r_\theta(z).
    \end{align}
\end{lemma}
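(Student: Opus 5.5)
The plan is to compute \(\delta_\theta(z)\) explicitly in terms of the recovered primal--dual quantities for each method, and then compare norms term by term. Both directions of each inequality then reduce to elementary operator-norm bounds over \(U\). We write \(M_U^\calA := \sup_{\theta\in U}\normop{\calA}\) as in Appendix~\ref{app:fom}.

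\emph{ADMM/sGS-ADMM.}

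1. As in the proof of Lemma~\ref{lem:app:fom:fix-kkt}, Moreau's decomposition gives \(z = X - \sigma S\), and Table~\ref{tab:fom} yields
\[
\delta_\theta(z) = -\PA[X - \calA^\dagger b] - \tau\sigma\PAp[c-S].
\]
The two terms lie in the orthogonal subspaces \(\ran(\calA^*)\) and \(\ker(\calA)\). Hence \(\norm{\delta_\theta(z)}_\calX^2 = a^2 + b^2\) with \(a := \norm{\PA(X - \calA^\dagger b)}_\calX\) and \(b := \tau\sigma\norm{\PAp(c - S)}_\calX\).

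2. For the first term, \(\PA X = \calA^\dagger\calA X\) and \(\PA\calA^\dagger b = \calA^\dagger b\). So \(a = \norm{\calA^\dagger(\calA X - b)}_\calX\), and \(\norm{\calA^\dagger w}_\calX^2 = \inprod{w}{(\calA\calA^*)^{-1}w}_\calY\). Using \(\kappa_U\Id \preceq \calA\calA^* \preceq (M_U^\calA)^2\Id\), this is sandwiched between \(\norm{w}^2/(M_U^\calA)^2\) and \(\norm{w}^2/\kappa_U\).

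3. For the second term, \(\norm{\PAp(c-S)}_\calX = \dist(c-S,\ran(\calA^*)) = \min_y\norm{\calA^*y + S - c}_\calX\).

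4. Combine with \((a+b)/\sqrt2 \le \sqrt{a^2+b^2} \le a+b\). This gives the upper constant \(\max\{\kappa_U^{-1/2},\tau\sigma\}\) and the lower constant \(\tfrac1{\sqrt2}\min\{1/M_U^\calA,\tau\sigma\}\), which is at least the stated \(\tfrac12\min\{1/M_U^\calA,\tau\sigma\}\).

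\emph{PDHG.}

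1. The step I expect to need care is verifying that \(\tilde s\) has the claimed conic properties. Set \(s_0 := c - \calA^*y\), so \(x^+ = \Pi_\calK(x - \eta_x s_0)\). The projection characterization onto a closed convex cone gives \(v := x - \eta_x s_0 - x^+ \in N_\calK(x^+) = (-\calK^*)\cap\{x^+\}^\perp\).

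2. Since \(x^+ = x + \delta_x\), we have \(v = -\delta_x - \eta_x s_0\), hence \(\tilde s = s_0 + \eta_x^{-1}\delta_x = -v/\eta_x\). Therefore \(x^+\in\calK\), \(\tilde s\in\calK^*\) and \(\inprod{x^+}{\tilde s}_\calX = 0\).

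3. It follows that \(\calA^*y + \tilde s - c = \eta_x^{-1}\delta_x\). Moreover, from the \(y\)-update,
\[
\delta_y = \eta_y[b - \calA x^+] - \eta_y\calA\delta_x, \quad\text{so}\quad \calA x^+ - b = -\eta_y^{-1}\delta_y - \calA\delta_x.
\]
Hence \(r_\theta(z) = \norm{\calA x^+ - b}_\calY + \eta_x^{-1}\norm{\delta_x}_\calX\).

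4. For the lower bound, \(r_\theta(z) \le \eta_y^{-1}\norm{\delta_y}_\calY + M_U^\calA\norm{\delta_x}_\calX + \eta_x^{-1}\norm{\delta_x}_\calX\). Since each block norm is at most \(\norm{\delta_\theta(z)}_\calH\), this gives \(r_\theta(z) \le (\eta_x^{-1}+\eta_y^{-1}+M_U^\calA)\norm{\delta_\theta(z)}_\calH\).

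5. For the upper bound, write \(t_1 := \norm{\calA x^+ - b}_\calY\) and \(t_2 := \eta_x^{-1}\norm{\delta_x}_\calX\). Then \(\norm{\delta_x}_\calX = \eta_x t_2\) and \(\norm{\delta_y}_\calY \le \eta_y t_1 + \eta_y M_U^\calA\eta_x t_2\). Therefore
\[
\norm{\delta_\theta(z)}_\calH \le \norm{\delta_x}_\calX + \norm{\delta_y}_\calY \le (\eta_x + \eta_y + \eta_x\eta_y M_U^\calA)(t_1+t_2),
\]
which is the stated bound.
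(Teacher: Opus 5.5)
Your proof is correct and follows the paper's argument essentially step for step. For ADMM/sGS-ADMM it uses the orthogonal splitting of \(\delta_\theta(z)\) into \(\calA^\dagger(\calA X - b)\) and \(\tau\sigma\PAp(c-S)\); for PDHG it identifies \(w - x^+ = -\eta_x\tilde s\) through the projection/normal-cone characterization and then applies the triangle inequality to \(\calA^*y+\tilde s-c=\eta_x^{-1}\delta_x\) and \(b-\calA x^+=\eta_y^{-1}\delta_y+\calA\delta_x\). The differences are only cosmetic: you bound \(\norm{\calA^\dagger w}_\calX\) through the quadratic form \(\inprod{w}{(\calA\calA^*)^{-1}w}_\calY\) rather than through \(\calA\calA^\dagger=\Id_\calY\), and you use \(\sqrt{p^2+q^2}\ge (p+q)/\sqrt2\) where the paper uses \(\max\{p,q\}\ge (p+q)/2\), which gives the slightly sharper lower constant \(1/\sqrt2\) in place of \(1/2\).
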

\begin{proof}
    For ADMM/sGS-ADMM, the proof of Lemma~\ref{lem:app:fom:fix-kkt} gives \(\delta_\theta(z) = -\PA[X - \calA^\dagger b] - \tau\sigma \PAp[c - S]\), with \(\PA[X - \calA^\dagger b] = \calA^\dagger(\calA X - b)\) since \(\PA \calA^\dagger = \calA^\dagger\), and the two terms are orthogonal. Since \(\calA \calA^\dagger = \Id_\calY\) and \(\normop{\calA^\dagger} \le \kappa_U^{-1/2}\), we get \(\norm{\calA X - b}_\calY / M_U^\calA \le \norm{\calA^\dagger (\calA X - b)}_\calX \le \kappa_U^{-1/2} \norm{\calA X - b}_\calY\), while \(\norm{\PAp(c - S)}_\calX\) is the distance from \(c - S\) to \(\ran(\calA^*)\), namely \(\min_{y \in \calY} \norm{\calA^* y + S - c}_\calX\). The upper bound in~\eqref{eq:app:fom:kkt-bridge} is the triangle inequality. For the lower bound, orthogonality gives \(\norm{\delta_\theta(z)}_\calX \ge \max\{\norm{\calA^\dagger(\calA X - b)}_\calX,\ \tau\sigma \norm{\PAp(c-S)}_\calX\}\), and the maximum of two nonnegative numbers is at least half their sum. For PDHG, \(x^+ = \Pi_\calK(w)\) with \(w := x - \eta_x(c - \calA^* y)\), and Moreau's decomposition of \(w\) gives \(x^+ \in \calK\), \(w - x^+ \in -\calK^*\) and \(\inprod{x^+}{w - x^+}_\calX = 0\); since \(w - x^+ = -\eta_x \tilde{s}\), the pair \((x^+, \tilde{s})\) is exactly cone-feasible and complementary. Its dual residual is \(\calA^* y + \tilde{s} - c = \eta_x^{-1} \delta_x\), while the \(y\)-block \(\delta_y = \eta_y [b - \calA(2x^+ - x)]\) gives \(b - \calA x^+ = \eta_y^{-1} \delta_y + \calA \delta_x\). Both sides of~\eqref{eq:app:fom:kkt-bridge-pdhg} follow from the triangle inequality applied to these two identities, using \(\norm{\delta_x}_\calX, \norm{\delta_y}_\calY \le \norm{\delta_\theta(z)}_\calH \le \norm{\delta_x}_\calX + \norm{\delta_y}_\calY\).
\end{proof}
In words, for ADMM/sGS-ADMM the \(z\)-residual \emph{is} the projected KKT residual of the recovered pair \((X,S)\), up to constants depending only on \(U\), \(\sigma\), and \(\tau\): the pair is exactly cone-feasible and complementary, its primal infeasibility is the \(\ran(\calA^*)\)-component of \(\delta_\theta(z)\), and its minimal dual infeasibility is the orthogonal component. For PDHG the same reading holds at the half-updated pair \((x^+, \tilde{s})\), which one projection computes from the state. 

\subsection{Additional material for \S\ref{sec:asr:asr}}
\label{app:asr}

\begin{lemma}[Fixed point sets are closed and convex]
    \label{lem:app:asr:fix-cc}
    Under Assumption~\ref{ass:asr:T}, \(\Fix(T_\theta)\) is closed and convex for every \(\theta \in U\).
\end{lemma}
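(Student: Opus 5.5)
The plan is to show that \(T_\theta\) is nonexpansive in the norm \(\norm{\cdot}_\theta\) and then apply the classical argument that fixed point sets of nonexpansive maps on a (finite-dimensional) Hilbert space are closed and convex. By Assumption~\ref{ass:asr:T}~(\romannumeral2), \(T_\theta = (1-\alpha_\theta)\Id + \alpha_\theta R_\theta\) for some \(R_\theta\) nonexpansive in \(\norm{\cdot}_\theta\). Since \(\alpha_\theta \in (0,1)\), \(T_\theta\) is itself nonexpansive in \(\norm{\cdot}_\theta\), hence continuous. Moreover \(\Fix(T_\theta) = \Fix(R_\theta)\), so it suffices to work with either map.

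\emph{Closedness.} Continuity gives closedness: \(\Fix(T_\theta) = \delta_\theta^{-1}(\{0\})\) is the preimage of a closed set under the continuous map \(\delta_\theta = T_\theta - \Id\). Continuity in \(\norm{\cdot}_\theta\) is the same as continuity in \(\norm{\cdot}_\calH\) by the norm equivalence of Assumption~\ref{ass:asr:T}~(\romannumeral3). If \(\Fix(T_\theta) = \emptyset\), which is allowed when \(\theta \notin \calD_\KKT\), both claims hold trivially.

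\emph{Convexity.} Take \(z_0, z_1 \in \Fix(T_\theta)\), \(\lambda \in [0,1]\), and \(z_\lambda := (1-\lambda) z_0 + \lambda z_1\). Nonexpansiveness gives
\[
\norm{T_\theta z_\lambda - z_0}_\theta \le \norm{z_\lambda - z_0}_\theta = \lambda \norm{z_1 - z_0}_\theta,
\qquad
\norm{T_\theta z_\lambda - z_1}_\theta \le (1-\lambda)\norm{z_1 - z_0}_\theta .
\]
Summing, the triangle inequality \(\norm{z_1 - z_0}_\theta \le \norm{T_\theta z_\lambda - z_0}_\theta + \norm{T_\theta z_\lambda - z_1}_\theta\) holds with equality. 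Because \(\norm{\cdot}_\theta\) comes from an inner product, it is strictly convex. Equality therefore forces \(T_\theta z_\lambda\) to lie on the segment \([z_0, z_1]\), and both individual inequalities above must be equalities. That pins \(T_\theta z_\lambda\) at distance \(\lambda\norm{z_1-z_0}_\theta\) from \(z_0\) along the segment, so \(T_\theta z_\lambda = z_\lambda\).

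The only step needing care is this equality case of the triangle inequality. I would make it explicit by writing \(u := T_\theta z_\lambda - z_0\) and \(w := z_1 - T_\theta z_\lambda\). Equality \(\norm{u+w}_\theta = \norm{u}_\theta + \norm{w}_\theta\) in an inner-product norm forces \(u\) and \(w\) to be nonnegative multiples of a common vector. Combined with the two norm bounds, this gives \(u = \lambda(z_1 - z_0)\), that is, \(T_\theta z_\lambda = z_\lambda\).

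An alternative route I would keep in reserve uses cocoercivity: \(-\delta_\theta\) is cocoercive in \(\inprod{\cdot}{\cdot}_\theta\), so its zero set is that of a maximal monotone operator, which is closed and convex.
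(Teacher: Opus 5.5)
Your proof is correct and follows the paper's route. The paper also notes that \(T_\theta\) is nonexpansive in the dynamic norm \(\norm{\cdot}_\theta\), then cites the standard fact that fixed point sets of nonexpansive maps on a Hilbert space are closed and convex; you prove that fact inline, using the equality case of the triangle inequality in an inner-product norm.
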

\begin{proof}
    \(T_\theta\) is nonexpansive in the Hilbertian metric \(\inprod{\cdot}{\cdot}_\theta\), and the fixed point set of a nonexpansive map on a Hilbert space is closed and convex~\citep[Corollary~4.24]{bauschke17springer-convex-analysis-hilbert-spaces}.
\end{proof}

\begin{proof}[Proof of Proposition~\ref{prop:asr:sr}]
    Fix \(\eps > 0\) and \(z \in \calS_\eps\). Since \(\TEpsC\) is \(\alpha_{\thetaEpsC}\)-averaged with respect to \(\inprod{\cdot}{\cdot}_{\thetaEpsC}\), it is nonexpansive in that dynamic metric. Thus, \(\forall k \ge 1\), \(\norm{(\TEpsC)^{k+1}(z) - (\TEpsC)^{k}(z)}_{\thetaEpsC} \le \norm{\TEpsC(z) - z}_{\thetaEpsC} = \norm{\deltaEpsC(z)}_{\thetaEpsC}\). On the other hand, \(\abs{\dist_{\thetaEpsC}((\TEpsC)^k (z), \VEpsC) - \dist_{\thetaEpsC}(z, \VEpsC)} \le \norm{(\TEpsC)^k (z) - z}_{\thetaEpsC}\) by \(1\)-Lipschitzness of the distance function. Thus, \(\forall k \ge 1\),
    \begin{align*}
        & \dist_{\thetaEpsC}((\TEpsC)^k (z), \VEpsC) \ge \dist_{\thetaEpsC}(z, \VEpsC) - \norm{(\TEpsC)^k (z) - z}_{\thetaEpsC} \\
        \ge & \dist_{\thetaEpsC}(z, \VEpsC) - \sum_{i=0}^{k-1} \norm{(\TEpsC)^{i+1} (z) - (\TEpsC)^{i} (z)}_{\thetaEpsC} \ge \dist_{\thetaEpsC}(z, \VEpsC) - k \norm{\deltaEpsC(z)}_{\thetaEpsC}.
    \end{align*}
    From Assumption~\ref{ass:asr:T}, for any \(w \in \calH\), we get \(\sqrt{m_U} \cdot \dist_{\calH}(w, \VEpsC) \le \dist_{\thetaEpsC}(w, \VEpsC) \le \sqrt{M_U} \cdot \dist_{\calH}(w, \VEpsC)\), \(\norm{w}_\thetaEpsC \le \sqrt{M_U} \cdot \norm{w}_\calH\). Thus, \(\forall k \ge 1\), \(\sqrt{M_U} \cdot \dist_\calH((\TEpsC)^k(z), \VEpsC) \ge \sqrt{m_U} \cdot \dist_{\calH}(z, \VEpsC) - k \sqrt{M_U} \cdot \norm{\deltaEpsC(z)}_{\calH}\). From~\eqref{eq:asr:sr-def} and \(k \le N_\eps\), we have 
    \begin{align*}
        \dist_\calH((\TEpsC)^k(z), \VEpsC) \ge (\sqrt{m_U / M_U} - k \rho_\eps) \cdot \dist_{\calH}(z, \VEpsC) \ge \sqrt{m_U/M_U} \beta \cdot \dist_\calH(z, \VEpsC)
    \end{align*}
    for any \(z \in \calS_\eps\).
\end{proof}

\begin{proof}[Proof of Proposition~\ref{prop:asr:finite-union}]
    Fix \(\eps > 0\). The union is nonempty and misses \(\VEpsC\), since each \(\calS^i_\eps\) does, which is Definition~\ref{def:asr:sr-def}~(\romannumeral1). Any \(z \in \cup_{i \in [r]} \calS_\eps^i\) lies in some \(\calS^i_\eps\), so~\eqref{eq:asr:sr-def} gives \(\norm{\deltaEpsC(z)}_\calH \le \rho^i_\eps \cdot \dist_\calH(z, \VEpsC) \le \max_{i \in [r]}\rho_\eps^i \cdot \dist_\calH(z, \VEpsC)\). Finiteness of the index set gives \(\max_{i \in [r]}\rho_\eps^i \to 0\) as \(\eps \downarrow 0\). Conversely each \(\calS^i_\eps \subseteq \cup_{j \in [r]}\calS^j_\eps\), so the supremum in~\eqref{eq:asr:sr-def} over the union is at least \(\rho^i_\eps\) for every \(i\), and the modulus is exactly \(\max_{i \in [r]}\rho^i_\eps\).
\end{proof}

Proposition~\ref{prop:asr:finite-union} does not extend verbatim to countably many families: \(\rho^i_\eps \to 0\) for each fixed \(i\) does not force \(\sup_i \rho^i_\eps \to 0\), as \(\rho^i_\eps := \eps^{1/i}\) shows.

\begin{proof}[Proof of Proposition~\ref{prop:asr:thickening}]
    Since \(\TEpsC\) is nonexpansive in its dynamic metric, for any \(z, q \in \calH\), we have 
    \begin{align*}
        & \norm{\deltaEpsC(z) - \deltaEpsC(q)}_\calH 
        \le 1/\sqrt{m_U} \cdot (\norm{\TEpsC(z) - \TEpsC(q)}_\thetaEpsC + \norm{z - q}_\thetaEpsC) \\
        \le & 2/\sqrt{m_U} \cdot \norm{z - q}_\thetaEpsC \le 2\sqrt{M_U/m_U} \cdot \norm{z - q}_\calH.
    \end{align*}
    Now take \(z \in \tilde{\calS}_\eps\) and \(q \in \calS_\eps\) with \(\norm{z - q}_\calH \le a_\eps \cdot \dist_\calH(q, \VEpsC)\). By \(1\)-Lipschitzness of the distance function, \(\dist_\calH(z, \VEpsC) \ge \dist_\calH(q, \VEpsC) - \norm{z-q}_\calH \ge (1-a_\eps)\cdot \dist_\calH(q, \VEpsC) > 0\), since \(a_\eps < 1\) and \(q \notin \VEpsC\); as \(\calS_\eps \subseteq \tilde\calS_\eps\), this settles Definition~\ref{def:asr:sr-def}~(\romannumeral1). Finally,
    \begin{align*}
        & \norm{\deltaEpsC(z)}_\calH \le \norm{\deltaEpsC(q)}_\calH + 2\sqrt{M_U/m_U} \cdot \norm{z - q}_\calH \\
        \le &  (\rho_\eps + 2\sqrt{M_U/m_U} \cdot a_\eps) \cdot \dist_\calH(q, \VEpsC)
        \le \frac{\rho_\eps + 2\sqrt{M_U/m_U} \cdot a_\eps}{1 - a_\eps} \cdot \dist_\calH(z, \VEpsC).
    \end{align*}
    The last inequality uses the displayed lower bound on \(\dist_\calH(z,\VEpsC)\), and \(\tilde\rho_\eps \to 0\) because \(\rho_\eps, a_\eps \to 0\).
\end{proof}

\begin{proof}[Proof of Proposition~\ref{prop:asr:forward-drift}]
    The first statement is classical, and needs more than nonexpansiveness. Since \(T_\theta\) is nonexpansive in \(\inprod{\cdot}{\cdot}_\theta\), the set \(\cl{\ran(\Id - T_\theta)}\) is convex~\citep{pazy71ijm-asymptotic-behavior-contractions}, hence so is \(\cl{\ran(\delta_\theta)} = -\cl{\ran(\Id - T_\theta)}\), so the projection defining \(v_\theta\) is single-valued and \(-v_\theta\) is the \emph{minimal displacement vector} of \(T_\theta\). Being \(\alpha_\theta\)-averaged, \(T_\theta\) is in addition strongly nonexpansive in that metric~\citep{bauschke17springer-convex-analysis-hilbert-spaces}, and for strongly nonexpansive maps the successive differences \((\Id - T_\theta)\zk = -\delta_\theta \zk\) converge strongly to \(-v_\theta\), from any \(z^{(0)}\)~\citep{bruck77hjm-nonexpansive-projection,baillon78hjm-asymptotic-behavior-nonexpansive-mappings}; plain nonexpansiveness yields only convergence of their norms. Monotonicity is immediate from nonexpansiveness of \(T_\theta\) in \(\norm{\cdot}_\theta\): since \(\delta_\theta \zkpo = T_\theta(T_\theta \zk) - T_\theta \zk\), we get \(\norm{\delta_\theta \zkpo}_\theta \le \norm{T_\theta \zk - \zk}_\theta = \norm{\delta_\theta \zk}_\theta\); the limit is \(\norm{v_\theta}_\theta\) by the first statement. Note the ambient residual \(\norm{\delta_\theta \zk}_\calH\) need not be monotone, which is why the dynamic metric is the natural one to monitor. For the second statement, the projection definition gives \(\inprod{v_\theta}{v_\theta - w}_\theta \le 0, \forall w \in \cl{\ran(\delta_\theta)}\). On the other hand, since \(T_\theta\) is \(\alpha_\theta\)-averaged under \(\inprod{\cdot}{\cdot}_\theta\), \(-\delta_\theta\) is \(1/(2\alpha_\theta)\)-cocoercive: 
    \(
        \tfrac{1}{2\alpha_\theta} \norm{\delta_\theta(z) - \delta_\theta(z+t v_\theta)}_\theta^2 \le \inprod{\delta_\theta(z) - \delta_\theta(z+t v_\theta)}{t v_\theta}_\theta
    \). With \(\delta_\theta(z) = v_\theta\) the right side is \(t \inprod{v_\theta - \delta_\theta(z + t v_\theta)}{v_\theta}_\theta \le 0\), by the projection inequality applied to \(w := \delta_\theta(z + t v_\theta) \in \ran(\delta_\theta)\); hence \(\norm{v_\theta - \delta_\theta(z+t v_\theta)}_\theta = 0\). For the last statement, note \(\Fix(T_\theta) = \emptyset\) means \(0 \notin \ran(\delta_\theta)\). If \(v_\theta \in \ran(\delta_\theta)\), then \(v_\theta \ne 0\); picking \(z\) with \(\delta_\theta(z) = v_\theta\) and setting \(z_j := z + j v_\theta\), the previous statement gives \(\delta_\theta(z_j) = v_\theta\) while \(\norm{z_j}_\calH \to \infty\). Otherwise \(v_\theta \in \cl{\ran(\delta_\theta)} \setminus \ran(\delta_\theta)\), so there are \(w_j\) with \(\delta_\theta(w_j) \to v_\theta\); such \(\{w_j\}\) must be unbounded, since a bounded one would admit, by finite dimensionality and continuity of \(\delta_\theta\), a subsequence converging to some \(w\) with \(\delta_\theta(w) = v_\theta\). Extracting a subsequence with norms tending to \(\infty\) finishes the proof.
\end{proof}

\subsection{Additional material for \S\ref{sec:exp}}
\label{app:exp}

\subsubsection{Motivating example}
\label{app:exp:motivating}
Here \(\barcalA = [1\ \ -1]\), so \(\barcalA\barcalA^* = 2\) and \(\normop{\barcalA} = \sqrt2\). Table~\ref{tab:fom} asks for \(\calA\calA^* \succeq_\calY \kappa_U \cdot \Id\) for ADMM/sGS-ADMM and \(\gamma_U < 1\) for PDHG; at \(\bartheta\) these read \(2 > 0\) and \(\sqrt{2\eta_x\eta_y} < 1\). Both are strict inequalities between continuous functions of \(\theta\), so both persist on a small compact \(U \ni \bartheta\), and Assumption~\ref{ass:asr:T} holds there. Since \(\thetaEpsC\) and \(\bartheta\) differ only in \(b\), \(U\) may be taken with \(\calA\) fixed, and Theorem~\ref{thm:asr:partial-superposition}\,(\romannumeral1) makes the defect state-free: \(\deltaEpsC - \deltaEpsP \equiv \barcalA^\dagger\eps = (\eps/2,\,-\eps/2)\) for ADMM and \((0,0,\eta_y\eps)\) for PDHG, of size \(e_\eps = \eps/\sqrt2\), resp.\ \(\eta_y\eps\), in \(\norm{\cdot}_\calH\). As \(\thetaEpsP \equiv \bartheta\), \(\deltaEpsP\) vanishes on \(\calL\), so~\eqref{eq:asr:asr-def} holds with \(\mu_\eps \equiv 0\) and \eqref{eq:asr:asr-strength} needs only \(e_\eps\) divided by \(\inf_{z \in \calS}\dist_\calH(z,\VEpsC)\). That infimum is at least \(c_0\): for ADMM, \(z = (-(1-t)\sigma,-t\sigma)\) gives \(\norm{z - (\eps,-\sigma)}_\calH^2 = ((1-t)\sigma+\eps)^2 + ((1-t)\sigma)^2 \ge 2((1-t)\sigma)^2 = \dist_\calH(z,F)^2\), and for PDHG, \(z = (0,0,y)\) gives \(\norm{z - (\eps,0,1)}_\calH^2 = \eps^2 + (1-y)^2 \ge \dist_\calH(z,F)^2\), while \(\dist_\calH(z,F) \ge c_0\) on \(\calS\) by construction. Hence \(\rho_\eps {}\le \eps/(\sqrt2c_0)\), resp.\ \(\eta_y\eps/c_0\). Replacing the fixed margin by \(c_\eps := c_\star\eps^\alpha\) with \(c_\star > 0\) and \(\alpha \ge 0\) leaves every step above unchanged, \(\calS_\eps\) staying nonempty once \(c_\eps\) falls below the length of \(\calL\), and gives \(\rho_\eps = \Theta(\eps^{1-\alpha})\) while \(c_\eps \gg \eps\), that is for \(\alpha < 1\); for \(\alpha \ge 1\) the \(O(\eps)\) displacement of \(\VEpsC\) dominates the margin, the infimum saturates at \(\Theta(\eps)\) and \(\rho_\eps = \Theta(1)\). Either way \(\rho_\eps \to 0\) exactly when \(\alpha < 1\); Definition~\ref{def:asr:asr} constrains only \(\rho_\eps\), and not how \(\calS_\eps\) moves with \(\eps\).

\emph{Which \(\eps\)-independent states are slow.} For a fixed singleton \(\calS_\eps \equiv \{z\}\), \eqref{eq:asr:sr-def} reads \(\rho_\eps = \norm{\deltaEpsC(z)}_\calH/\dist_\calH(z,\VEpsC) =: R_\eps(z)\). Only \(b\) moves, so \(\deltaEpsC = \delta_\bartheta + \barcalA^\dagger\eps\) everywhere and \(\VEpsC = \{F_\eps\}\) with \(F_\eps \to F\); hence \(R_\eps(z) \to \norm{\delta_\bartheta(z)}_\calH/\norm{z-F}_\calH\) for every fixed \(z \ne F\), which vanishes exactly when \(\delta_\bartheta(z) = 0\):
\begin{align*}
    \{z\} \text{ is an SR family} \iff z \in \calL \setminus \{F\}.
\end{align*}
At \(F\) the two scales coincide, \(\eps/\sqrt2\) against \(\eps\), so \(R_\eps(F) \equiv 2^{-1/2}\) (\(\eta_y\) for PDHG). The conclusion is uniform off \(\calL\): on a compact \(K \subset \calH \setminus \calL\), \(m := \inf_K\norm{\delta_\bartheta}_\calH > 0\) and \(\sup_K\dist_\calH(\cdot,\VEpsC) \le M < \infty\), while~\eqref{eq:asr:ass-lip-param} makes \(\deltaEpsC \to \delta_\bartheta\) uniformly on \(K\), so \(\inf_K R_\eps \ge m/(2M) > 0\) for small \(\eps\). This bounds the local step, not the whole run, since an orbit may enter the afterimage later; on one ray, however, it also bounds the whole run. For ADMM with \(\sigma = \tau = 1\) and \(t \le 0\), both coordinates of \(z = (t,t-1)\) are nonpositive, so \(\Pi_\calK(z) = 0\) and Table~\ref{tab:fom} sends the whole ray to \(T_{\thetaEpsC}(t,t-1) = (\eps/2,\,-1-\eps/2)\), at distance \(\eps/\sqrt2\) from \(F_\eps\). Writing \(z = F_\eps + w\) on the quadrant \(z_1 \ge 0 \ge z_2\) makes the map affine with linear part \(0.5\bigl[\begin{smallmatrix}1&-1\\1&1\end{smallmatrix}\bigr] = 2^{-1/2}Q\), \(Q\) orthogonal; the orbit stays there as long as \(\norm{w}_\calH < \eps\), which holds at \(k = 1\) and persists under the contraction; so \(\dist_\calH(\zk,\VEpsC) = \eps\,2^{-k/2}\) exactly for every \(k \ge 1\), and one step reaches any fixed accuracy \(\zeta > 0\) once \(\eps \le \sqrt2\,\zeta\).

\emph{Restart and Halpern acceleration do not remove this plateau.}
Restart and Halpern-type schemes are popular accelerations for FOM on LP~\citep{applegate23mp-faster,lu24-restarted}, so it is natural to ask whether they shorten the plateau above. For the fixed maps of Table~\ref{tab:fom}, they do not. On \(\calL\), \(\TEpsC z = z + d_\eps\) with \(d_\eps := \barcalA^\dagger \eps = (\eps/2,\, -\eps/2)\) parallel to \(\calL\), so the plain iteration advances by \(\norm{d_\eps}_\calH = \eps/\sqrt2\) per step and needs \(\Theta(\eps^{-1})\) steps to traverse an \(O(1)\) portion of \(\calL\). Here \(\Theta(\eps^{-1})\) counts operator evaluations for a fixed relative reduction of \(\dist_\calH(z, \VEpsC)\); it is not the time to reach a fixed residual tolerance, which the \(O(\eps)\) residual on \(\calL\) meets immediately. Restarts are counted by their operator evaluations. For the Halpern iteration \(\zkpo = \tfrac{1}{k+2} z^{(0)} + \tfrac{k+1}{k+2} \TEpsC \zk\), induction gives \(\zk = z^{(0)} + \tfrac{k}{2} d_\eps\) while the orbit stays on \(\calL\) --- half the speed of the plain iteration; applied to the reflected map \(2\TEpsC - \Id\), the same computation gives \(\zk = z^{(0)} + k d_\eps\), recovering the plain speed. Restarting either scheme only resets the anchor \(z^{(0)}\) and leaves the displacement per operator evaluation at \(O(\eps)\). Hence these accelerations change constants, but not the \(\Theta(\eps^{-1})\) plateau; the same computation applies to PDHG with \(d_\eps = (0,0,\eta_y\eps)\). This does not contradict the restart guarantees of~\citet{applegate23mp-faster}, which accelerate with respect to a fixed instance's sharpness constant: along our family that constant degenerates, as the error-bound reading of \(\rho_\eps\) in \S\ref{sec:asr:asr} records. Solvers that adapt \(\sigma\), the step sizes, or the scaling fall outside this computation. The constant-step phase itself is a known regime of ADMM on LP/QP~\citep{boley13siopt-linearconv-admm-lp-qp}; the afterimage certificate explains it through a neighboring problem and extends it to non-polyhedral and non-attained limits.

\subsubsection{SDP square-root H\"older}
\label{app:exp:sqrt}
The \(\Theta(h_\eps^{1/2})\) dislocation below is consistent with the square-root error bounds for LMIs~\citep{sturm00siopt-error-bound-lmi,ding23ol-sc-errorbound-sensitivity}; we derive it directly, since this center has no strictly feasible dual slack and strict complementarity fails at the collapse endpoint.
Throughout \(\sigma = \tau = 1\), \(\calK = \psd{2}\) is self-dual and \(\barcalA\barcalA^* = \normF{A_1}^2 = 3\), so \(\calA^\dagger b = \tfrac{b}{3}A_1\) and \(\PAp Y = Y - \tfrac13\inprod{A_1}{Y}A_1\). Both fixed sets are the images \(Z = X - \sigma S\) of the KKT triples \((X,y,S)\).

At the center, the primal minimizes \(X_{22}\) subject to \(2X_{12} = X_{22}\), \(X \succeq 0\); positivity gives \(X_{22} \ge 0\) while \(X = 0\) is feasible, so optimality forces \(X_{22} = 0\) and then the constraint forces \(X_{12} = 0\); the dual forces \(y = 0\), \(S = E_{22}\), leaving the ray \(\VEpsC = \{\diag{t,-1} : t \ge 0\}\). At the petal, the slack \(S(y) = E_{22} + \eps^2E_{11} - yA_1 = \bigl[\begin{smallmatrix}\eps^2 & -y\\ -y & 1+y\end{smallmatrix}\bigr]\) is positive semidefinite iff \(y^2 \le \eps^2(1+y)\); since \(b = \eps^3 > 0\), the dual maximizes \(\eps^3y\) at the upper root of that quadratic,
\begin{align*}
    y_\eps = \tfrac12\bigl(\eps^2+\eps\sqrt{\eps^2+4}\bigr) = \eps + \tfrac12\eps^2 + O(\eps^3),
    \qquad
    X_\eps = \frac{u_\eps u_\eps^\top}{\sqrt{\eps^2+4}}, \quad u_\eps := (y_\eps,\ \eps^2)^\top ,
\end{align*}
where \(\det S(y_\eps) = 0\), so \(S(y_\eps)\) has kernel spanned by \(u_\eps\) and complementarity forces \(X = \varsigma\,u_\eps u_\eps^\top\), with \(\barcalA(u_\eps u_\eps^\top) = 2y_\eps\eps^2 - \eps^4 = \eps^3\sqrt{\eps^2+4}\) fixing \(\varsigma = (\eps^2+4)^{-1/2}\). The petal is thus the single point \(\VEpsP = \{Z_\eps\}\), \(Z_\eps := X_\eps - S(y_\eps)\).

For the dislocation, \(\normF{Z_\eps - \diag{t,-1}}^2 = ((Z_\eps)_{11}-t)^2 + 2(Z_\eps)_{12}^2 + ((Z_\eps)_{22}+1)^2\). By \(y_\eps^2 = \eps^2(1+y_\eps)\), \((Z_\eps)_{11} = \eps^2\bigl(\tfrac{1+y_\eps}{\sqrt{\eps^2+4}}-1\bigr) < 0\) for small \(\eps\), so the minimum over \(t \ge 0\) sits at \(t = 0\) and the distance is exact, not just asymptotic; with \((Z_\eps)_{12} = \eps + O(\eps^2)\) and \((Z_\eps)_{22}+1 = -\eps + O(\eps^2)\), it is \(\dist_\calH(Z_\eps,\VEpsC) = \sqrt3\,\eps + O(\eps^2)\).
Finally, \(\thetaEpsP\) and \(\bartheta\) differ only in \(b\) and \(C\), so Theorem~\ref{thm:asr:partial-superposition}\,(\romannumeral1) makes the defect state-free: \(\deltaEpsC - \deltaEpsP \equiv \calA^\dagger(0-\eps^3) - \PAp(-\eps^2E_{11}) = -\tfrac{\eps^3}{3}A_1 + \eps^2E_{11}\), since \(\inprod{A_1}{E_{11}} = 0\). Its two pieces are orthogonal with squared norms \(\eps^6/3\) and \(\eps^4\), so it has size \(\eps^2\sqrt{1+\eps^2/3}\); as \(\deltaEpsP(Z_\eps) = 0\), dividing by the dislocation gives \(\rho_\eps = \eps/\sqrt3\cdot(1+O(\eps)) = \Theta(\eps)\).

\subsubsection{Two afterimages, two exponents}
\label{app:exp:two}
The data, and hence \(\calA^\dagger\), \(\PAp\) and the description of the fixed sets, are those of Example~\ref{exp:sqrt}; only the roles change. The frozen petal is that example's center, so \(\VEpsPOne = V_Z = \{Q_t = \diag{t,-1} : t \ge 0\}\), a ray with endpoint \(Q_0\) and \(\{Q_0\}\) as its only proper face; the center is that example's petal, so \(\VEpsC = \{Z_\eps\}\) with \(Z_\eps = X_\eps - S_\eps\) as computed there, and \(\dist_\calH(Z_\eps, V_Z) = \normF{Z_\eps - Q_0} = \sqrt3\,\eps + O(\eps^2)\), the nearest point being the endpoint since \((Z_\eps)_{11} = -0.5\eps^2 + O(\eps^3) < 0\).

For the second petal \(\thetaEpsPTwo = (\barcalA, 0, C_\eps)\), the primal minimizes \(\inprod{C_\eps}{X} = \eps^2X_{11} + X_{22}\) over \(2X_{12} = X_{22}\), \(X \succeq 0\); both terms are then nonnegative, so \(X = 0\) is the unique optimum, while the dual maximizes \(0 \cdot y\) over \(S(y) = C_\eps - yA_1 \succeq 0\), \ie over the whole interval \(y^2 \le \eps^2(1+y)\), whose endpoints \(y_\pm = 0.5(\eps^2 \pm \eps\sqrt{\eps^2+4}) = \pm\eps + 0.5\eps^2 + O(\eps^3)\) are the roots met in Example~\ref{exp:sqrt}. The images \(X - S(y) = q(y) = -C_\eps + yA_1\) therefore fill a segment of length \(\normF{A_1}(y_+ - y_-) = \sqrt3\,\eps\sqrt{\eps^2+4} = 2\sqrt3\,\eps + O(\eps^3)\). Note \(y_+\) is the center's dual optimum, so \(Z_\eps = q(y_+) + X_\eps\): the segment ends where the collapse point sits, up to the \(O(\eps^2)\) primal part.

Both petals differ from the center in \(b\) and/or \(c\) only, so Theorem~\ref{thm:asr:partial-superposition}\,(\romannumeral1) gives state-free defects \(\deltaEpsC - \deltaEpsPOne \equiv \tfrac{\eps^3}{3}A_1 - \eps^2E_{11}\) and \(\deltaEpsC - \deltaEpsPTwo \equiv \tfrac{\eps^3}{3}A_1\), the second because the two share \(c = C_\eps\); their norms are \(\Delta^1_\eps = \eps^2\sqrt{1+\eps^2/3}\) (orthogonal pieces, as in Example~\ref{exp:sqrt}) and \(\Delta^2_\eps = \eps^3/\sqrt3\). Since \(\deltaEpsPOne\) vanishes on \(V_Z\) and \(\deltaEpsPTwo\) on the segment, \(\norm{\deltaEpsC(z)}_\calH\) is \emph{constant} on each carrier, equal to \(\Delta^1_\eps\), resp.\ \(\Delta^2_\eps\), and~\eqref{eq:asr:sr-def} reduces to dividing by the smallest dislocation.

On \(\SEpsOne\), \(\normF{Q_t - Z_\eps}^2 = (t - (Z_\eps)_{11})^2 + 2(Z_\eps)_{12}^2 + ((Z_\eps)_{22}+1)^2\) increases in \(t \ge 0\), so the infimum sits at \(t = \eps^\alpha\) and equals \(\sqrt{\eps^{2\alpha} + 3\eps^2} + O(\eps^{\alpha+2} + \eps^3)\); with \(\Delta^1_\eps = \eps^2(1+O(\eps^2))\) this gives \(\rho^1_\eps = \Theta(\eps^{2-\alpha})\) for \(\alpha < 1\), \(0.5\eps(1+O(\eps))\) at \(\alpha = 1\), and \(\tfrac{1}{\sqrt3}\eps(1+o(1))\) for \(\alpha > 1\) --- the last two being the saturation, forced by the floor \(\dist_\calH(Z_\eps,V_Z) = \sqrt3\eps(1+O(\eps))\) on the denominator. On \(\SEpsTwo\), \(q(y) - Z_\eps = (y - y_+)A_1 - X_\eps\) gives
\(
    \normF{q(y) - Z_\eps}^2 = 3(y-y_+)^2 - 2(y-y_+)\inprod{A_1}{X_\eps} + \normF{X_\eps}^2,
\)
with \(\inprod{A_1}{X_\eps} = \barcalA X_\eps = \eps^3\) by primal feasibility and \(\normF{X_\eps} = 0.5\eps^2 + O(\eps^3)\). The right side decreases in \(y\) on \([y_-,y_+]\), so the infimum sits at \(y = (1-\eta)y_+\), where \(y - y_+ = -\eta y_+\) and the leading term \(3\eta^2y_+^2 = 3\eta^2\eps^2(1+O(\eps))\) dominates the other two, which are \(O(\eps^4)\); hence \(D^2_\eps = \sqrt3\,\eta\eps + O(\eps^2) > 0\), which also gives \(\SEpsTwo \cap \VEpsC = \emptyset\), and \(\rho^2_\eps = \Delta^2_\eps/D^2_\eps = \tfrac{\eps^2}{3\eta}(1+O(\eps))\). Both regions take \(\vEpsPOne = \vEpsPTwo = 0\) and \(\mu_\eps \equiv 0\) in Definition~\ref{def:asr:asr}, each sitting inside its own petal's fixed-point set. Figure~\ref{fig:exp:two} follows one orbit down the resulting staircase (observed in the run, not implied by the theorems); its \(\eps\)-independent seed sits at the far end \(t = t_1\), an \(O(1)\) distance from \(\VEpsC\), so it outlasts \(N_\eps = \Theta(\eps^{\alpha-2})\), the worst case over \(\SEpsOne\), which is approached only at the near end \(t = \eps^\alpha\).

\subsubsection{Escape to the horizon}
\label{app:exp:escape}
Write \(\barcalA = [-1\ \ 0]\), \(\calA_\eps = [-1\ \ \eps]\), \(c = (0,1)\), \(a = 1/\eps\), and recall the PDHG step of Table~\ref{tab:fom}, \(x^+ = \Pi_\calK[x - \eta_x(c - \calA^*y)]\), \(T_\theta z = (x^+,\, y + \eta_y[b - \calA(2x^+-x)])\). The limit parameter is strongly infeasible: \(\barcalA\R^2_+ = (-\infty,0]\) is closed and \(\dist(1,(-\infty,0]) = 1 > 0\), so \(\bartheta\) admits no KKT point and \(\Fix T_\bartheta = \emptyset\).

Its drift is nevertheless attained, and on an explicit ray. On \(\calR_\bartheta\), where \(x = 0\) and \(y \ge 0\), we have \(c - \barcalA^*y = (y,1)\), so both entries of \(x - \eta_x(y,1)\) are nonpositive, \(x^+ = 0 = x\), and \(y^+ - y = \eta_y\); hence \(\delta_\bartheta \equiv (0,0,\eta_y)\) there --- the primal iterate stays at the origin while the dual moves at constant speed, which reflects the LP dual growing without bound. Seeding at \(z^{(0)} = 0 \in \calR_\bartheta\), the orbit stays on the ray with \(\delta_\bartheta(\zk) \equiv (0,0,\eta_y)\), so Proposition~\ref{prop:asr:forward-drift} forces \(v_\bartheta = (0,0,\eta_y)\). Conversely \(\delta_\bartheta(z) = v_\bartheta\) forces \(x^+ = x\), whose second coordinate \(x_2 = \max\{x_2-\eta_x,0\}\) gives \(x_2 = 0\); the third residual coordinate is then \(\eta_y[1 - \barcalA x] = \eta_y(1+x_1)\), so \(x_1 = 0\), and the first coordinate \(0 = \max\{-\eta_x y,0\}\) gives \(y \ge 0\). The attainment set is therefore exactly \(\calR_\bartheta\), so \(\tagmap(\bartheta) = (+,\fin)\) and \(\VEpsPOne = \{z : \delta_\bartheta(z) = v_\bartheta\} = \calR_\bartheta\): the petal keeps a carrier in the generalized sense even though \(\Fix T_\bartheta\) is empty. Being piecewise affine with polyhedral pieces, these maps have closed \(\ran\delta_\theta\) on an LP, so an \(\inf\) tag cannot occur here.

The center, by contrast, is solvable. Feasibility \(-x_1 + \eps x_2 = 1\), \(x \ge 0\) forces \(x_2 \ge a\), so \(\inprod{c}{x} = x_2\) is minimized uniquely at \(x = (0,a)\), while the dual maximizes \(y\) over \(c - \calA_\eps^*y = (y,\,1-\eps y) \ge 0\), \ie over \([0,a]\), uniquely at \(y = a\). Hence \(\VEpsC = \{(0,a,a)\}\), a singleton, and \(D_\eps = \sqrt2\,a\).

The two moduli follow. For \(z = (0,0,y)\) with \(0 \le y \le a\) the vector \(c - \calA_\eps^*y = (y,\,1-\eps y)\) is again nonnegative, so \(x^+ = 0 = x\) and \(\deltaEpsC(z) = (0,0,\eta_y) = \delta_\bartheta(z) = v_\bartheta\): on \(\SEpsOne\) the center--petal defect vanishes identically, giving \(\mu_\eps \equiv 0\), and the residual there is the \(\eps\)-independent \(\norm{v_\bartheta}_\theta = \sqrt{\eta_y}\). Since \(\norm{z - (0,a,a)}_\calH^2 = a^2 + (y-a)^2\) decreases in \(y\), the infimum over \(\SEpsOne\) sits at \(y = D_\eps^\gamma\), whence \(\rho^1_\eps = \eta_y/\sqrt{a^2+(a-D_\eps^\gamma)^2} = (\eta_y\eps/\sqrt2)(1+O(\eps^{1-\gamma}))\). For the second petal the same KKT computation with \(b = 1-\xi_\eps\) gives \(\VEpsPTwo = \{(0,(1-\xi_\eps)a,\,a)\}\) whenever \(0 < \xi_\eps < 1\); as it differs from the center in \(b\) only, Theorem~\ref{thm:asr:partial-superposition}\,(\romannumeral2) makes the defect state-free and equal to \((0,0,\eta_y\xi_\eps)\), while the dislocation is \(\xi_\eps a\). Both are linear in \(\xi_\eps\), which therefore cancels: \(\rho^2_\eps = \eta_y\eps\) exactly, with \(\xi_\eps\) setting the position of \(\SEpsTwo\) and nothing else.

\subsubsection{SOCP face drop}
\label{app:exp:socp}

\begin{figure}[htbp]
    \centering

    \begin{minipage}{\textwidth}
        \centering
        \begin{minipage}[b]{0.245\textwidth}
            \centering
            \includegraphics[width=\columnwidth]{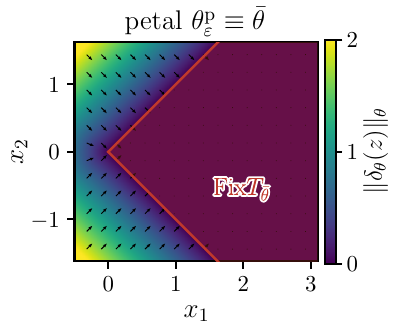}
        \end{minipage}
        \hfill
        \begin{minipage}[b]{0.245\textwidth}
            \centering
            \includegraphics[width=\columnwidth]{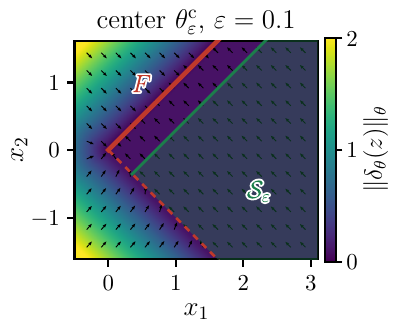}
        \end{minipage}
        \hfill
        \begin{minipage}[b]{0.245\textwidth}
            \centering
            \includegraphics[width=\columnwidth]{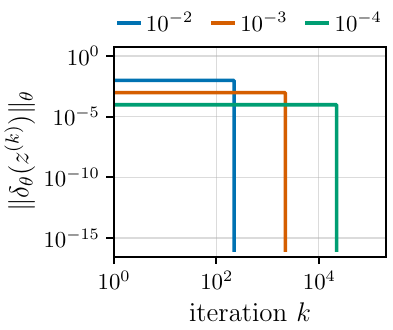}
        \end{minipage}
        \hfill
        \begin{minipage}[b]{0.245\textwidth}
            \centering
            \includegraphics[width=\columnwidth]{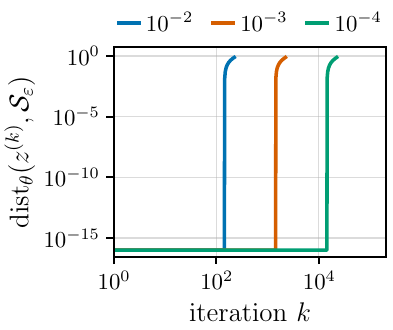}
        \end{minipage}
    \end{minipage}

    \caption{Example~\ref{exp:socp} under PDHG; layout as in Figure~\ref{fig:exp:motivating-pdhg}, on the invariant plane \(\{x_3 = y = 0\}\). The wedge \(\VEpsP\) (solid, then dashed) stays dark although only its edge \(F\) is fixed; from one \(\eps\)-independent seed in \(\calS_\eps\) (shaded) the residual sits at \(\sqrt{2\eta_x}\,\eps\). \label{fig:exp:socp}}
\end{figure}

\begin{example}[SOCP face drop]
    \label{exp:socp}
    Take \(\calK = \calQ^3\), \(\barcalA := [0\ \ 0\ \ 1]\), \(b := 0\), and move only the cost: \(\bartheta := (\barcalA, 0, 0)\), \(\thetaEpsC := (\barcalA, 0, \eps(1,-1,0))\), \(\thetaEpsP \equiv \bartheta\) and \(\normD{\thetaEpsC - \bartheta} = \sqrt2\,\eps\). Both tags equal \((0,\fin)\). The limit cost vanishes, so under PDHG on \(\calH = \R^4\) the petal is the whole wedge \(\VEpsP = \Fix T_\bartheta = \{(x_1,x_2,0,0) : x_1 \ge \abs{x_2}\}\), while the tilt selects one edge, \(\VEpsC = F := \{(r,r,0,0) : r \ge 0\}\), for every \(\eps > 0\) (Appendix~\ref{app:exp:socp}). Theorem~\ref{thm:asr:face-selection} applies, for any \(c_0 > 0\), to any nonempty bounded \(\calS_\eps \equiv \calS \subset \{z \in \VEpsP : \dist_\calH(z,F) \ge c_0\}\), on which the defect is exactly \(-\eta_x\eps(1,-1,0,0)\), giving \(\rho_\eps {}\le \sqrt2\eta_x\eps/c_0 = \Theta(\normD{\thetaEpsC-\bartheta})\); see Figure~\ref{fig:exp:socp}.
\end{example}
At \(\eps = 0\) the objective vanishes, so the optimal set is the \emph{entire} feasible slice \(\calQ^3 \cap \{x_3 = 0\} = \{x_1 \ge \abs{x_2}\}\); moving only \(c\) leaves that slice fixed, so the tilt merely selects one of its two extreme rays, while the dual optimum \(y = 0\) stays unique. Hence \(\VEpsC = F \subsetneq \VEpsP\) exactly for every \(\eps > 0\), with no limit taken. On \(\calS\) the constant is exact: \(\dist_\calH((x_1,x_2,0,0),F) = \abs{x_1-x_2}/\sqrt2 \ge c_0\) forces \(x_1 - x_2 \ge \sqrt2c_0\), so for small \(\eps\) the point \(x - \eta_x\eps(1,-1,0)\) still lies in \(\calQ^3\), the projection is inactive, and \(\deltaEpsC \equiv -\eta_x\eps(1,-1,0,0)\) has norm exactly \(\sqrt2\eta_x\eps\); dividing by \(c_0\) gives \(\rho_\eps {}\le \sqrt2\eta_x\eps/c_0\).

\subsubsection{SOCP zero gap, not attained}
\label{app:exp:nonatt}

\begin{figure}[htbp]
    \centering

    \begin{minipage}{\textwidth}
        \centering
        \begin{minipage}[b]{0.245\textwidth}
            \centering
            \includegraphics[width=\columnwidth]{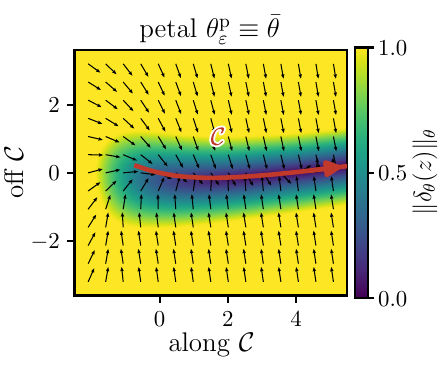}
        \end{minipage}
        \hfill
        \begin{minipage}[b]{0.245\textwidth}
            \centering
            \includegraphics[width=\columnwidth]{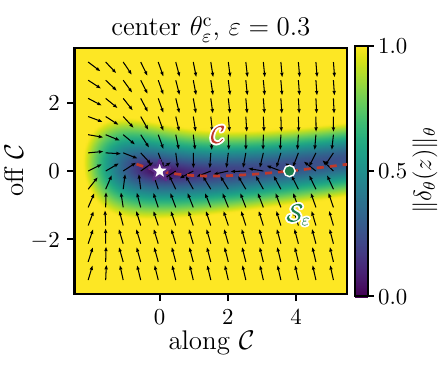}
        \end{minipage}
        \hfill
        \begin{minipage}[b]{0.245\textwidth}
            \centering
            \includegraphics[width=\columnwidth]{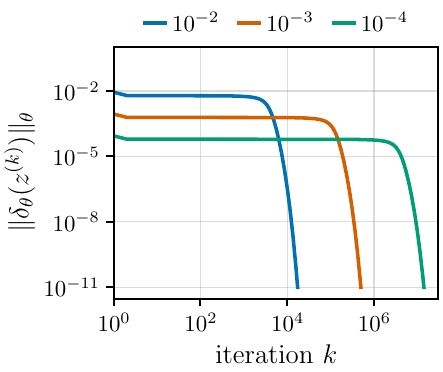}
        \end{minipage}
        \hfill
        \begin{minipage}[b]{0.245\textwidth}
            \centering
            \includegraphics[width=\columnwidth]{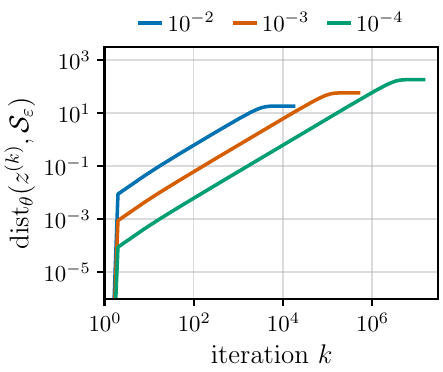}
        \end{minipage}
    \end{minipage}

    \caption{Example~\ref{exp:nonatt} under ADMM; layout as in Figure~\ref{fig:exp:motivating-pdhg}, on the plane through \(\VEpsC\) spanned by the chord to the seed and \(\calC\)'s transverse direction, so both marked points are exactly in-slice. \emph{Left two:} the field has \emph{no} zero --- \(\calC\) merely darkens and runs off the frame (arrow); then a zero appears at \(\VEpsC\) (star), with the seed \(\calS_\eps\) (dot) further out at \(\lambda = 2\). \emph{Right two:} one orbit per \(\eps = 10^{-2},10^{-3},10^{-4}\). \label{fig:exp:nonatt}}
\end{figure}

\begin{example}[SOCP zero gap, not attained]
    \label{exp:nonatt}
    Let \(\calK = \calQ^3\), \(\barcalA := [0\ \ 0\ \ 1]\), \(b := 1\), \(u := (0,1,0)\), and move the cost only, freezing the petal at the limit: \(\bartheta = \thetaEpsP :\equiv (\barcalA,\, 1,\, \bar c)\) with \(\bar c := (1,-1,0)\), \(\thetaEpsC := (\barcalA,\, 1,\, \bar c + \eps u)\) and \(\normD{\thetaEpsC - \bartheta} = \eps\).
    The limit minimizes \(x_1 - x_2\) over \(x_3 = 1\), \(x \in \calQ^3\): the value is \(0\), attained by the dual at \(y = 0\) but by no feasible \(x\), since \(x_1 - x_2 \ge \sqrt{x_2^2+1} - x_2 > 0\). So \(\bartheta \notin \calD_\KKT\) and \(\Fix T_\bartheta = \emptyset\) while \(v_\bartheta = 0\): \(\tagmap(\bartheta) = (0,\inftag)\), a tag the maps of Table~\ref{tab:fom} cannot exhibit on an LP. Writing \(r_t := \sqrt{t^2+1}\), the images \(Z_t\) of \(x_t = (r_t,t,1)\), \(s_t = (1,-t/r_t,-1/r_t)\) trace a curve \(\calC\) carrying \(\norm{\delta_\bartheta(Z_t)}_\calH = 1 - t/r_t = \Theta(t^{-2})\), which tends to \(0\) without reaching it. Since \(\VEpsP = \emptyset\), \(\mu_\eps > 0\) is forced and \(\calS_\eps\) is carried by a residual sublevel set. The petal's residual \(\Theta(t^{-2})\) and the state-free defect \(\deltaEpsC - \deltaEpsP \equiv -\eps u\) match at \(t \asymp \eps^{-1/2}\), which is where the center's unique fixed point sits: \(\VEpsC = \{Z_{t^\star_\eps}\}\) lies \emph{on} \(\calC\) with \(t^\star_\eps = 2^{-1/2}\eps^{-1/2}(1+O(\eps))\), escaping as \(D_\eps = \eps^{-1/2}(1+O(\eps))\). Seeding there, at \(\calS_\eps := \{Z_{t_\eps}\}\) with \(t_\eps := \lambda\eps^{-1/2}\) and any fixed \(\lambda > 0\), gives \(\mu_\eps = \Theta(\eps)\) and \(\rho_\eps = \Theta(\eps^{3/2})\), so \(N_\eps = \Theta(\eps^{-3/2})\); see Figure~\ref{fig:exp:nonatt}. Theorem~\ref{thm:asr:escape-to-horizon} applies, though its own carrier sits inside \(\bbB_\calH(0,D_\eps^\gamma)\) whereas \(\calS_\eps\) sits \emph{at} the horizon; confined to the ball the same construction yields only \(\Theta(\eps^{\gamma+1/2})\).
\end{example}
Here \(\barcalA\barcalA^* = 1\), \(\barcalA^\dagger b = (0,0,1)\), \(\PA = \diag{0,0,1}\), \(\PAp = \diag{1,1,0}\), and \(\sigma = \tau = 1\) makes the metric Euclidean. Dual feasibility \(\bar c - \barcalA^*y = (1,-1,-y) \in \calQ^3\) reads \(1 \ge \sqrt{1+y^2}\) and forces \(y = 0\); with the primal value unattained, \(\KKT(\bartheta) = \emptyset\) and Assumption~\ref{ass:asr:T}\,(\romannumeral1) gives \(\Fix T_\bartheta = \emptyset\).

The triples are almost-KKT: \(x_t, s_t \in \partial\calQ^3\), \(\inprod{x_t}{s_t} = 0\) and \(\barcalA x_t = 1\), so \(\Pi_\calK(Z_t) = x_t\) and only the dual residual survives, \(\delta_\bartheta(Z_t) = (1 - t/r_t)u\). Hence \(\inf_z \norm{\delta_\bartheta(z)}_\calH = 0 = v_\bartheta\), while \(\delta_\bartheta(z) = 0\) would produce a KKT triple: \(v_\bartheta \notin \ran\delta_\bartheta\) and \(\tagmap(\bartheta) = (0,\inftag)\). None of the Table~\ref{tab:fom} maps can do this on an LP: they are piecewise affine with polyhedral pieces, so \(\ran\delta_\theta\) is closed and the minimal displacement is attained.

The center is solvable: minimizing \(x_1 - (1-\eps)x_2\) forces \(x_2/r_{x_2} = 1-\eps\), giving the unique optima \(x^\star_\eps = (\kappa_\eps^{-1},\, (1-\eps)\kappa_\eps^{-1},\, 1)\) with \(\kappa_\eps := \sqrt{2\eps-\eps^2}\) and \(y^\star_\eps = \kappa_\eps\). Both lie on \(\calC\), so \(\VEpsC = \{Z_{t^\star_\eps}\}\) with \(t^\star_\eps = (1-\eps)/\kappa_\eps\) and \(D_\eps = \sqrt2\,t^\star_\eps(1+o(1))\).

For the moduli, \(t_\eps = \lambda\eps^{-1/2}\) gives \(\mu_\eps = 1 - \lambda/\sqrt{\lambda^2+\eps} = \eps/(2\lambda^2) + O(\eps^2)\), which is Definition~\ref{def:asr:asr} with \(\vEpsP = 0\); since \(\deltaEpsC = \deltaEpsP - \eps u\) on \(\calC\), the center residual is \(\abs{\mu_\eps - \eps}\), a \emph{signed} cancellation that is exact to leading order at \(\lambda = 2^{-1/2}\). Along \(\calC\), \(\tfrac{d}{dt}Z_t = (1,1,0) + O(t^{-2})\), so \(\norm{Z_t - Z_{t'}}_\calH = \sqrt2\abs{t-t'}(1+O(\min(t,t')^{-2}))\) and the dislocation is \(\sqrt2\abs{\lambda - \lambda^\star_\eps}\eps^{-1/2}(1+O(\eps))\) with \(\lambda^\star_\eps := t^\star_\eps\eps^{1/2} = 2^{-1/2}(1 - \tfrac34\eps + O(\eps^2))\). Dividing cancels \(\abs{\lambda - 2^{-1/2}}\) and leaves \(\rho_\eps = (\lambda + 2^{-1/2})(\sqrt2\lambda^2)^{-1}\eps^{3/2}(1+o(1))\); at \(\lambda = 2^{-1/2}\) both leading terms vanish and one further order returns the same formula, so it is continuous there. A \emph{fixed} \(\lambda\) is admissible for all small \(\eps\), whereas an interval of \(\lambda\)'s straddling \(2^{-1/2}\) would eventually contain the moving \(\lambda^\star_\eps\) and meet \(\VEpsC\).

\subsubsection{SDP with a non-closed image}
\label{app:exp:adrift}

\begin{figure}[htbp]
    \centering

    \begin{minipage}{\textwidth}
        \centering
        \begin{minipage}[b]{0.245\textwidth}
            \centering
            \includegraphics[width=\columnwidth]{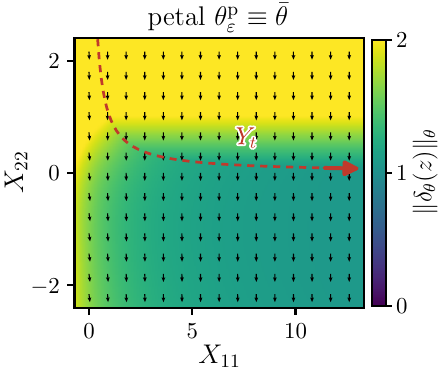}
        \end{minipage}
        \hfill
        \begin{minipage}[b]{0.245\textwidth}
            \centering
            \includegraphics[width=\columnwidth]{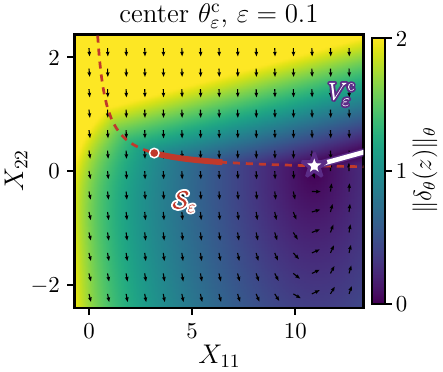}
        \end{minipage}
        \hfill
        \begin{minipage}[b]{0.245\textwidth}
            \centering
            \includegraphics[width=\columnwidth]{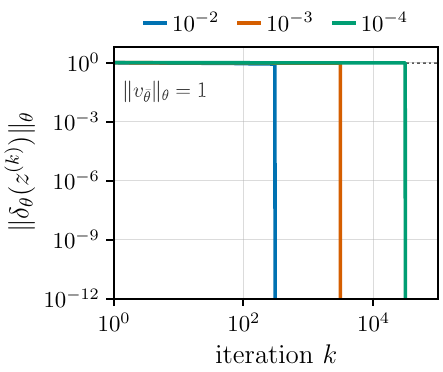}
        \end{minipage}
        \hfill
        \begin{minipage}[b]{0.245\textwidth}
            \centering
            \includegraphics[width=\columnwidth]{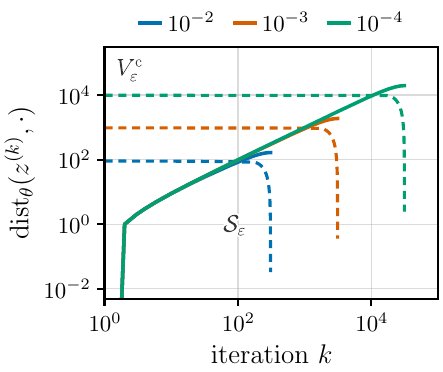}
        \end{minipage}
    \end{minipage}

    \caption{Example~\ref{exp:adrift} under ADMM; layout as in Figure~\ref{fig:exp:motivating-pdhg}, on the plane \(\{X_{12} = 1\}\), which contains the boundary curve \(Y_t\) and the center's optimal curve exactly; the horizontal axis is compressed, the tip escaping as \(\eps^{-1}\) while the transverse structure stays \(O(1)\). \emph{Left two:} the color never darkens below \(\norm{v_\bartheta}_\calH = 1\) and \(Y_t\) (dashed) runs off the frame (arrow); then the zero set appears as the unbounded curve \(\VEpsC\) (white), its tip (star) on \(Y_t\), with \(\calS_\eps\) (solid red, seed dot at its near end) the arc at \(t \asymp \eps^{-1/2}\). \emph{Right two:} one orbit per \(\eps\), seeded at the arc's near endpoint; solid curves give \(\dist_\theta(\zk,\calS_\eps)\) and dashed ones \(\dist_\theta(\zk,\VEpsC)\). \label{fig:exp:adrift}}
\end{figure}

\begin{example}[SDP with a non-closed image]
    \label{exp:adrift}
    Let \(\calX = \sym{2}\), \(\calK = \psd{2}\), \(\calY = \R^2\), \(b := (1,-1)\), and move the operator only, freezing the petal at the limit: \(\calA_\eps X := (X_{12},\ X_{22} - \eps X_{11})\), \(\bartheta = \thetaEpsP :\equiv (\calA_0,\, b,\, 0)\) and \(\thetaEpsC := (\calA_\eps,\, b,\, 0)\), so \(\normD{\thetaEpsC - \bartheta} = \eps\).
    The cost vanishes, so the optimal set is the feasible set. The operator stays nondegenerate, \(\calA_\eps\calA_\eps^* = \diag{0.5,\ 1+\eps^2} \succeq_\calY 0.5\Id\) for every \(\eps \ge 0\); what fails is its interaction with the cone boundary, \(\calA_0(\psd{2}) = \{(u,v) : v > 0\} \cup \{(0,0)\}\), which is not closed. Since \(b\) has \(v\)-coordinate \(-1\), the nearest image points are \((1,v)\) with \(v \downarrow 0\), so \(\dist(b, \calA_0(\psd{2})) = 1\) is realized by no \(X \succeq 0\): \(\Fix T_\bartheta = \emptyset\), \(v_\bartheta = -E_{22} \notin \ran\delta_\bartheta\) with \(\norm{v_\bartheta}_\calH = 1\), and \(\tagmap(\bartheta) = (+,\inftag)\), the last cell. The drift is approached along the rank-one \emph{boundary curve} \(Y_t := \bigl[\begin{smallmatrix}t&1\\1&1/t\end{smallmatrix}\bigr] \in \partial\psd{2}\), where \(\delta_\bartheta(Y_t) = -(1+\tfrac1t)E_{22}\) \emph{exactly}. The center is solvable, \(\tagmap(\thetaEpsC) = (0,\fin)\), with optimal set the unbounded ray \(\VEpsC = \{X_p := \bigl[\begin{smallmatrix}p&1\\1&\eps p-1\end{smallmatrix}\bigr] : p \ge r_\eps\}\), \(r_\eps := 0.5(1+\sqrt{1+4\eps})/\eps = \eps^{-1}+1+O(\eps)\), whose tip \(X_{r_\eps} = Y_{r_\eps}\) sits on the boundary curve and escapes, \(D_\eps = r_\eps + r_\eps^{-1}\).

    On the boundary curve every residual is the scaled primal infeasibility, \(\norm{\deltaEpsC(Y_t)}_\calH = \abs{g_\eps(t)}/\sqrt{1+\eps^2}\) with \(g_\eps(t) := 1 + 1/t - \eps t\): the limit's own \(1/t\) and the perturbation's \(\eps t\) are of the same size when \(t \asymp \eps^{-1/2}\), and cancel at \(t = \eps^{-1/2}\), where \(g_\eps = 1\) exactly. Fixing \(0 < c_0 < c_1\), the \emph{diverging} arc \(\calS_\eps := \{Y_t : c_0\eps^{-1/2} \le t \le c_1\eps^{-1/2}\}\) has \(\mu_\eps = \eps^{1/2}/c_0 = \Theta(\eps^{1/2})\) and \(\rho_\eps = \eps(1 + \mu_\eps + O(\eps)) = \Theta(\eps)\), hence \(N_\eps = \Theta(\eps^{-1})\), the order Theorem~\ref{thm:asr:escape-to-horizon} displays. The carrier itself \emph{diverges}, at rate \(\eps^{-1/2}\) while staying far short of the tip at \(\eps^{-1}\), and is the first region here that is positive-dimensional \emph{and} escapes entirely to infinity. Its center residual converges to \(\norm{v_\bartheta}_\calH = 1\) uniformly on the arc, \(\sup_{z \in \calS_\eps}\abs{\norm{\deltaEpsC(z)}_\calH - 1} = O(\eps^{1/2})\), while \(\mu_\eps = \sup_{z \in \calS_\eps}\norm{\delta_\bartheta(z) - v_\bartheta}_\calH = \eps^{1/2}/c_0\) records the petal's own gap; and the perturbation moves \(\calA\), so no \(U\) with \(\calA\) fixed contains the families; Theorem~\ref{thm:asr:partial-superposition} is therefore unavailable and the defect is state dependent. See Figure~\ref{fig:exp:adrift}.
\end{example}
Throughout \(\sigma = \tau = 1\), so \(\norm{\cdot}_\theta = \normF{\cdot}\) and \(\calH = \calX = \sym{2}\); with \(c = 0\) and \(\Pi_+ := \Pi_{\psd{2}}\), \(\Pi_- := \Id - \Pi_+\), Table~\ref{tab:fom} reads \(\delta_\theta(Z) = -\PA(\Pi_+(Z) - \calA^\dagger b) - \PAp(\Pi_-(Z))\). Since full row rank is an open condition, \(\calA_0\calA_0^* = \diag{0.5,1} \succ_\calY 0\) lets us fix a compact \(U \ni \bartheta\) on which \(\calA\calA^* \succeq_\calY \tfrac14\Id\), and Table~\ref{tab:fom} then gives Assumption~\ref{ass:asr:T} throughout \(U\), the limit included.

\emph{The image and the tag.} For \(X \succeq 0\) we have \(X_{22} \ge 0\), and \(X_{22} = 0\) forces \(X_{12} = 0\), while every \(v > 0\) is hit by \(X = \bigl[\begin{smallmatrix}u^2/v & u\\ u & v\end{smallmatrix}\bigr]\); that is the stated image, whose closure contains \((1,0)\) although the set does not. On the boundary curve, \(\det Y_t = 0\) and \(t > 0\) give \(Y_t \succeq 0\), so \(\Pi_+(Y_t) = Y_t\) and the map collapses to the scaled primal infeasibility,
\begin{align}
    \label{eq:app:exp:onbdry}
    \delta_\theta(Y_t) = -\PA(Y_t - \calA^\dagger b) = -\calA^\dagger(\calA Y_t - b),
\end{align}
which at \(\bartheta\) is \(-(1+\tfrac1t)E_{22} \to -E_{22}\). Conversely \(\normF{\calA_0^\dagger(u,v)}^2 = 2u^2+v^2 \ge \norm{(u,v)}_2^2\), the pseudo-inverse being expansive here, and the two terms of the map lie in the orthogonal subspaces \(\ran\calA_0^*\) and \(\ker\calA_0\); dropping the second and writing \(X := \Pi_+(Z)\) gives, for \emph{every} \(Z \in \calH\),
\begin{align}
    \label{eq:app:exp:lb}
    \normF{\delta_\bartheta(Z)} \ \ge\ \normF{\calA_0^\dagger(\calA_0X - b)} \ \ge\ \norm{\calA_0X - b}_2 \ \ge\ \dist(b,\calA_0(\psd{2})) = 1 .
\end{align}
So the infimum is exactly \(1\). Now \(\cl{\ran\delta_\bartheta}\) is convex by Proposition~\ref{prop:asr:forward-drift}, so its minimum-norm element is unique and equals \(-E_{22}\); equality in~\eqref{eq:app:exp:lb} would need an image point at distance exactly \(1\), so \(v_\bartheta \notin \ran\delta_\bartheta\) and \(\tagmap(\bartheta) = (+,\inftag)\).

\emph{The center.} Solving \(\calA_\eps X = b\) forces \(X_{12} = 1\) and \(X_{22} = \eps X_{11} - 1\), so with \(p := X_{11}\) the feasible points are exactly the \(X_p\), and \(X_p \succeq 0\) iff \(p \ge r_\eps\). Taking \(y = 0\), \(S = 0\) makes each of them a KKT point, and it is the only dual slack: \(S = -\calA_\eps^*y \succeq 0\) needs \(\eps y_2 \ge 0\) and \(-y_2 \ge 0\) at once, forcing \(y_2 = 0\), after which \(\det S = -y_1^2/4 \ge 0\) forces \(y_1 = 0\). Hence \(\VEpsC\) is as stated and \(\tagmap(\thetaEpsC) = (0,\fin)\). At \(p = r_\eps\) the determinant vanishes, so \(X_{r_\eps} = Y_{r_\eps}\), and \(\normF{X_p}\) increases on \(p \ge r_\eps\), making the tip the nearest fixed point to the origin, with \(D_\eps = r_\eps + r_\eps^{-1}\).

\emph{The two moduli.} By~\eqref{eq:app:exp:onbdry} and \(v_\bartheta = -E_{22}\), \(\norm{\delta_\bartheta(Y_t) - v_\bartheta}_\calH = 1/t\), largest on \(\calS_\eps\) at the near endpoint: \(\mu_\eps = \eps^{1/2}/c_0\), and \(\VEpsP = \emptyset\) leaves no smaller choice available. For the center, \(\calA_\eps Y_t - b = (0,\,g_\eps(t))\) and \(\calA_\eps^\dagger(0,w)\) has norm \(\abs{w}/\sqrt{1+\eps^2}\), which is the stated residual; writing \(t = c\eps^{-1/2}\) with \(c \in [c_0,c_1]\) gives \(g_\eps = 1 + \eps^{1/2}(c^{-1}-c)\), within \(O(\eps^{1/2})\) of \(1\) \emph{uniformly} in \(c\), which is the uniform rise. The nearest fixed point to \(Y_t\) is again the tip, since \(r_\eps \sim \eps^{-1}\) dwarfs \(c_1\eps^{-1/2}\); so \(\calS_\eps \cap \VEpsC = \emptyset\) and \(r_\eps - t = \eps^{-1}(1 - c\eps^{1/2} + O(\eps))\). The \(-c\eps^{1/2}\) of the denominator cancels that of the numerator, leaving \(\norm{\deltaEpsC(Y_t)}_\calH/\dist_\calH(Y_t,\VEpsC) = \eps(1 + \eps^{1/2}/c + O(\eps))\) uniformly in \(c\); the supremum is therefore pinned from both sides, with no monotonicity needed, at \(\rho_\eps = \eps(1 + \mu_\eps + O(\eps))\), the surviving correction contributed by the near end. Theorem~\ref{thm:asr:escape-to-horizon} applies, its conclusion \(O(D_\eps^{-1} + D_\eps^{\gamma-1}\normD{\thetaEpsC-\bartheta}) = O(\eps)\) being the order realized here, and the arc meets the theorem's near-minimizer test once \(\nu_\eps := \mu_\eps\) is chosen, since \(\sup_{\calS_\eps}\normF{Z} \asymp \eps^{-1/2}\) sits inside \(\bbB_\calH(0,D_\eps^\gamma)\) for every fixed \(\gamma > 0.5\). We claim only that the arc \emph{matches} the displayed order.

\end{document}